\newtheorem{theorem}{Theorem}
\newtheorem{remark}{Remark}
\newtheorem{lemma}{Lemma}
\newtheorem{corollary}{Corollary}[section]
\numberwithin{equation}{section}
\newcommand{\R}{\mathbb{R}}
\def\d{{\rm d}}
\def\bPtau{\bar\partial_\tau}
\newcommand{\al}{\alpha}
\def\Dal{{\partial_t^\al}}
\def\d{{\rm d}}
\title{Subdiffusion with Time-Dependent Coefficients: Improved Regularity and Second-Order Time Stepping \thanks{The work of B. Jin is partially supported by UK EPSRC EP/T000864/1, that of B. Li is supported by Hong Kong RGC grant (Project No. 15300817), and that of  Z. Zhou by
 Hong Kong RGC grant (Project No. 25300818).}
}
\author{Bangti Jin\thanks{Department of Computer Science, University College London, Gower Street, London WC1E 6BT, UK
(\texttt{b.jin@ucl.ac.uk, bangti.jin@gmail.com})}
\and Buyang Li\thanks{Department of Applied Mathematics, The Hong Kong Polytechnic University, Hung Hom, Hong Kong.
(\texttt{buyang.li@polyu.edu.hk, libuyang@gmail.com})}
\and Zhi Zhou\thanks{ Department of Applied Mathematics, The Hong Kong Polytechnic University, Kowloon, Hong Kong.(\texttt{zhizhou@polyu.edu.hk, zhizhou0125@gmail.com})}
}
\date{\today}
\begin{document}

\maketitle

\begin{abstract}
This article concerns second-order time discretization of subdiffusion equations with time-dependent
diffusion coefficients. High-order differentiability and regularity estimates are established for subdiffusion
equations with time-dependent coefficients. Using these regularity results and a perturbation
argument of freezing the diffusion coefficient, we prove that the convolution quadrature generated by
the second-order backward differentiation formula, with proper correction at the first time step, can
achieve second-order convergence for both nonsmooth initial data and incompatible source term.
Numerical experiments are consistent with the theoretical results.\vskip5pt
\textbf{keywords}: subdiffusion, time-dependent coefficient, second-order backward differentiation formula, convolution quadrature, perturbation argument, error estimate
% \subclass{MSC code1 \and MSC code2 \and more}
\end{abstract}

%\keywords{subdiffusion, time-dependent coefficient, second-order backward differentiation formula, convolution quadrature, perturbation argument, error estimate}

\section{Introduction}\label{sec:intro}

Let $\Omega\subset\mathbb{R}^d $ ($d=1,2,3$) be a convex polygonal domain with a boundary $\partial\Omega$. Consider
the following subdiffusion equation
\begin{align}\label{eqn:pde}
\left\{\begin{aligned}
&\Dal u(x,t) - \nabla\cdot(a(x,t)\nabla u(x,t)) = f(x,t), \quad &&(x,t)\in\Omega\times(0,T], \\
&u(x,t)=0,  &&(x,t)\in \partial\Omega\times(0,T], \\
&u(x,0)=u_0(x), &&x\in\Omega,
\end{aligned}
\right.
\end{align}
where $a(x,t):\Omega\times(0,T)\to \R^{d\times d}$ is a positive definite matrix-valued function, $f$ and $u_0 $ are the
source term and initial value, respectively, and
\begin{align}\label{eqn:RLderive}
   \Dal u(x,t)
   &: = \frac{1}{\Gamma(1-\alpha)}\int_0^t(t-s)^{-\alpha}\partial_s u (x,s)\d s ,
\end{align}
denotes the Caputo fractional time derivative of order
$\alpha\in(0,1)$ \cite[p. 70]{KilbasSrivastavaTrujillo:2006}.

In recent years, there has been a growing interest in the mathematical and numerical analysis of subdiffusion models due to
their diverse applications in describing subdiffusion processes arising from physics, engineering, biology
and finance. In a subdiffusion process, the mean squared particle displacement grows only sublinearly with time,
instead of growing linearly with time as in a normal diffusion process. At a microscopic level, such processes
can be adequately described by continuous time random walk, and accordingly, at a macroscopical level, the
probability density function of the particle appearing at certain time $t$ and location $x$ is described
by a subdiffusion model of the form \eqref{eqn:pde}. We refer interested readers to \cite{Metzler:2014,MetzlerKlafter:2000}
for a long list of applications arising in biology and physics. In the physical
literature, a time-dependent diffusion coefficient is often employed to study complex systems, e.g., turbulence
system \cite{HentschelProcaccia:1984,KlafterBlumenShlesinger:1987,FaLenzi:2005} and
cooling process in geology \cite{Dodson:1973,GarraGiustiMainardi:2018}; see also \cite{OrsingherPolito:2009,GarraOrsingherPolito:2015}
for its connection with birth-death processes.

The numerical analysis of the subdiffusion problem has been the topic of many recent investigations. In particular, a
large number of time-stepping schemes for approximating the Caputo derivative have been developed. The most
popular ones include convolution quadrature \cite{CuestaLubichPalencia:2006,JinLazarovZhou:SISC2016,JinLiZhou:2017sisc,BanjaiLopez:2019},
piecewise polynomial approximation \cite{SunWu:2006,LinXu:2007,Alikhanov:2015,LvXu:2016}, and discontinuous Galerkin method
\cite{McLeanMustapha:2009}. For a given smooth source term $f$ and initial value $u_0$, these schemes generally exhibit
only first-order convergence due to the inherent weak singularity of the solution at $t=0$. If the solution $u$ is smooth,
then higher-order convergence may be achieved, otherwise some modifications of the schemes
\cite{CuestaLubichPalencia:2006,JinLazarovZhou:SISC2016,JinLiZhou:2017sisc} or locally refined meshes 
\cite{McLeanMustapha:2009,StynesO'Riordan:2017,LiaoMcLean:2019} {(see also \cite{Brunner:2004} for related works 
in the context of Volterra integral equations)} can be used; see the recent survey \cite{JinLazarovZhou:2019} for 
further references. All these works focus on subdiffusion with a time-independent coefficient, i.e., $a(x,t)\equiv a(x)$.

When the diffusion coefficient $a(x,t)$ is time-dependent, the analysis of regularity of solutions and the development and convergence
analysis of numerical schemes are rather limited, despite its obvious practical importance. Many existing analytical techniques,
e.g., Laplace transform and separation of variables, are not directly applicable, due to the time-dependency of the coefficient $a(x,t)$.
Kubica and Yamamoto \cite{KubicaYamamoto:2017} proved the existence and uniqueness of a weak solution, and also
several regularity results. In this work, we present new regularity estimates in Theorems \ref{thm:reg-homo}
and \ref{thm:reg-inhomo}. For example, for $u_0\in L^2(\Omega)$ and
$f\equiv0$, under suitable conditions on $a(x,t)$, there holds
$\|\frac{\d^k}{\d t^k}(t^ku(t))\|_{L^{2}(\Omega)} \le c \| u_0  \|_{L^2(\Omega)}.$
Such an estimate provides one crucial tool for the error analysis of high-order time-stepping schemes.

 So far there are very few works on the numerical approximation of the model \eqref{eqn:pde} \cite{Mustapha:2017,JinLiZhou:2017variable}.
Mustapha \cite{Mustapha:2017} analyzed a spatially semidiscrete Galerkin finite element method (FEM) for the homogeneous problem, and showed
optimal order convergence by a novel energy argument. In essence, the approach extends the argument in \cite{LuskinRannacher:1982} for
standard parabolic problems to the fractional case. In the authors' prior work \cite{JinLiZhou:2017variable}, we developed a
different approach to analyze the spatially semidiscrete Galerkin scheme, as well as a fully discrete scheme based on convolution
quadrature (CQ) generated by backward Euler method (and L1 scheme), and showed optimal order convergence rates for both semidiscrete and
fully discrete schemes (up to a logarithmic factor), based on a perturbation argument and new regularity results. However, the discrete scheme
in \cite{JinLiZhou:2017variable} is only first order accurate in time. To the best of our knowledge, there is no proven second-
or higher-order accurate time-stepping scheme for the subdiffusion model with a time-dependent coefficient and nonsmooth problem data
in the literature. This contrasts sharply with the case of time-independent elliptic operators, for which there are several
strategies for devising high-order schemes, e.g., initial correction \cite{JinLiZhou:2017sisc}. These observations motivate
the present work.

In this article, we propose a second-order time-stepping scheme for problem \eqref{eqn:pde} with nonsmooth
initial data and incompatible source term. It is based on the CQ generated by the second-order
backward differentiation formula (BDF2), with suitable correction at the first step. The correction is inspired by the
recent works \cite{CuestaLubichPalencia:2006,JinLazarovZhou:SISC2016,JinLiZhou:2017sisc} and essential for restoring
the second-order convergence. Further, we present a complete error analysis in Section \ref{sec:error},
and prove a convergence rate $O(\tau^2)$ with $\tau$ being the time stepsize,
for any fixed $t_n>0$, of the scheme for both nonsmooth initial data and incompatible source term. The error analysis
relies heavily on new temporal regularity results for the model \eqref{eqn:pde} in Section \ref{sec:reg} and a refined
perturbation argument, which substantially extends the prior work \cite{JinLiZhou:2017variable}.
Specifically, the error analysis relies on suitable nonstandard bounds for problem data in the space $\dot H^{-\gamma}(\Omega)$ (cf.,
Lemma \ref{lem:sbd-0} and Theorem \ref{thm:err-inhomo}), and perturbation estimates at both $t=0$ and $t=t_m$ (cf., the
proof of Lemma \ref{lem:stability-homo}), which are substantially different from the one in \cite{JinLiZhou:2017variable}
which only requires estimates at $t=t_m$ for problem data in $L^2(\Omega)$. The new scheme, regularity results
and time discretization errors represent the main contributions of this work.

In the context of the standard parabolic counterpart with $L^2(\Omega)$-initial data and zero forcing term, Luskin
and Rannacher \cite{LuskinRannacher:1982} analyzed a fully discrete scheme based on Galerkin FEM in space
and the backward Euler method in time, and proved a first-order temporal convergence. Somewhat surprisingly,
Sammon \cite{Sammon:1983} proved that for standard parabolic problems with $L^2(\Omega)$
initial data, generally only second-order convergence can be achieved for a class of single step and linear multi-step
time stepping schemes (by ignoring the errors at starting steps).
The design and analysis of schemes with higher order accuracy remain largely elusive
for standard parabolic models with time-dependent elliptic operators and nonsmooth data. Thus, the development
and analysis of high-order time-stepping schemes for the model \eqref{eqn:pde} with general problem
data is still very challenging; see Section \ref{sec:scheme} for further discussions.

The rest of the paper is organized as follows. In Section \ref{sec:scheme}, we describe the proposed time-stepping scheme.
In Section \ref{sec:reg}, we prove new temporal regularity results, and in Section \ref{sec:error}, we give a
complete error analysis for both smooth and nonsmooth data. Finally in Section \ref{sec:numer}, we present numerical
results to complement the error analysis. Throughout, the notation $c$ denotes a generic constant which may differ at each occurrence, but it is always
independent of the time stepsize $\tau$, but may depend on the final time $T$.

\section{Derivation of the numerical scheme}\label{sec:scheme}

In this section, we construct a second-order time-stepping scheme for problem \eqref{eqn:pde} using
CQ generated by BDF2 with initial correction, derived from a perturbation argument. For notational
simplicity, we shall denote by $v(t)=v(\cdot,t)$ for a function $v$ defined on $\Omega\times(0,T]$.

Since the Riemann-Liouville derivative is equivalent to the Caputo one for functions
with zero initial value, we rewrite problem \eqref{eqn:pde} as
\begin{align}\label{eqn:reform-0}
  ^R\Dal (u(t)-u_0) + A(t) u(t) = f(t),
\end{align}
where the Riemann-Liouville derivative $^R\Dal \varphi(t)$ is defined by $^R\Dal \varphi(t)=
\frac{\d}{\d t}\frac{1}{\Gamma(1-\alpha)}\int_0^t(t-s)^{-\alpha}\varphi(s)\d s$, and the time-dependent elliptic
operator $A(t): H_0^1(\Omega)\cap H^2(\Omega)\rightarrow L^2(\Omega)$ is defined by
$$
A(t)\phi=- \nabla\cdot(a(x,t)\nabla \phi).
$$

Let $t_n=n\tau$, $n=0,1,\dots,N$, be a uniform partition of the interval $[0,T]$
with a time stepsize $\tau=T/N$. BDF2--CQ approximates the Riemann-Liouville
derivative $^{R}\partial_t^\alpha \varphi(t)$ at the time $t=t_n$ by
\begin{equation}\label{eqn:CQ}
\bar\partial_\tau^\alpha \varphi^n:= \frac{1}{\tau^\alpha}\sum_{j=0}^n b_j \varphi^{n-j}\quad\mbox{with $\varphi^n=\varphi(t_n)$},
\end{equation}
where the weights $\{b_j\}_{j=0}^\infty$ are the coefficients in the power series expansion
\begin{equation}\label{eqn:delta}
\delta_\tau(\zeta)^\alpha
=\frac{1}{\tau^\alpha}\sum_{j=0}^\infty b_j\zeta ^j \quad \mbox{with}\quad \delta_\tau(\zeta ):=
\frac{ \zeta^2-4\zeta+3}{2\tau}
\end{equation}
If the function $\varphi$ is smooth and has sufficiently many vanishing derivatives at $t=0$,
then BDF2--CQ is second-order accurate pointwise in time \cite{Lubich:1986}
\cite[Theorem 3.1]{Lubich:1988}.

By employing \eqref{eqn:CQ} to discretize the term $^R\Dal (u(t)-u_0)$ in
\eqref{eqn:reform-0}, we obtain a BDF2--CQ scheme for \eqref{eqn:pde}:
given $u^0=u_0$, find $u^n$ such that
\begin{equation}\label{eqn:fully}
\bPtau^\alpha (u-u_0)^n  +  A(t_n) u^n = f(t_n),\quad n=1,2\ldots,N .
\end{equation}
This scheme generally has only first-order accuracy, instead of second-order accuracy, due to the low
regularity of the solution $u(t)$ at $t=0$, unless restrictive compatibility conditions on the initial
data $u_0$ and $f$ are satisfied (which guarantee good solution regularity at $t=0$). This has been
observed for many different time-stepping schemes for subdiffusion with a time-independent diffusion coefficient
\cite{JinLiZhou:2017sisc,JinLazarovZhou:SISC2016,CuestaLubichPalencia:2006}. Hence, the vanilla BDF2--CQ scheme
\eqref{eqn:fully} has to be modified in order to achieve second-order convergence for general data.

In this work, we propose the following time-stepping scheme:
\begin{equation}\label{eqn:fully-correct}
\left\{\begin{aligned}
\bPtau^\alpha (u-u_0)^1  +  A(t_1) u^1 + \tfrac12 A(0) u_0 &= f(t_1) +\tfrac12 f(0),\\
\bPtau^\alpha (u-u_0)^n  +  A(t_n)u^n &= f(t_n),\qquad n=2,3,\ldots,N ,
\end{aligned}\right.
\end{equation}
which is obtained by first rewriting problem \eqref{eqn:pde} into
\begin{equation*}
^R\partial_t^\alpha (u-u_0) + A(0)u(t) = F(t)
  \quad\mbox{with}\quad
F(t)=f(t) + (A(0)-A(t))u(t) ,
\end{equation*}
and then following \cite{JinLazarovZhou:SISC2016,JinLiZhou:2017sisc} to modify the first step as
\begin{equation*}
  \bPtau^\alpha(u-u_0)^1 + A(0)u^1 + \tfrac12 A(0)u_0 = F(t_1) + \tfrac{1}{2}F(0).
\end{equation*}
Then substituting the expression of $F(t)$ and collecting terms yield the correction in \eqref{eqn:fully-correct}.
In \eqref{eqn:fully-correct}, the term $A(0)u_0$ should be interpreted in a distributional
sense for weak initial data, e.g., $u_0\in L^2(\Omega)$.

Note that $F'(0)$ is generally not defined in $L^2(\Omega)$. Hence, the existing correction methods in
\cite{JinLiZhou:2017sisc} for higher-order BDFs cannot be applied directly. It is still very challenging to
develop higher-order time discretization methods for problem \eqref{eqn:pde} with nonsmooth problem data.
This seems to be open even for the standard parabolic counterpart \cite{Sammon:1983}.

\section{Regularity of solutions}\label{sec:reg}

We assume that the diffusion coefficient $a(x,t):\Omega\times(0,T)\to \mathbb{R}^{d\times d}$ satisfies
that for some real number $\lambda\geq 1$, integer $K\geq 2$ and $i,j=1,\ldots,d$:
\begin{align}
&\lambda^{-1}|\xi|^2\le a(x,t)\xi\cdot\xi \le \lambda|\xi|^2,\quad
\forall\, \xi\in\mathbb{R}^d, \,\, \forall\, (x,t)\in \Omega\times(0,T],\label{Cond-1} \\
&|\tfrac{\partial}{\partial t} a_{ij}(x,t)|+|\nabla_x\tfrac{\partial^k}{\partial t^k}a_{ij}(x,t)| \le c,
\,\,\forall\, (x,t)\in \Omega\times(0,T],k=0,\ldots,K+1,\label{Cond-2}
\end{align}
where $\cdot$ and $|\cdot|$ denote the standard Euclidean inner product and norm, respectively.
Under these conditions, there holds $D(A(t))=H^1_0(\Omega)\cap H^2(\Omega)$ for all $t\in[0,T]$. By the complex
interpolation method \cite{Triebel:1978}, this implies
$$
D(A(t)^\gamma)=\dot H^{2\gamma}(\Omega) ,\quad\forall\, t\in[0,T], \,\,\,\forall\,\gamma\in[0,1],
$$
where $\dot H^{2\gamma}(\Omega)=(L^2(\Omega),H^1_0(\Omega)\cap H^2(\Omega))_{[\gamma]}$ denotes the complex interpolation space
between $L^2(\Omega)$ and $H^1_0(\Omega)\cap H^2(\Omega)$. Equivalently, it can be defined via spectral decomposition of
the operator $A(t)$ \cite[Chapter 3]{Thomee:2006}. Let $\{(\lambda_j,\varphi_j)\}_{j=1}^n$ be the eigenpairs of $A(t)$ with multiplicity
counted and $\{\varphi_j\}_{j=1}^\infty$ be an orthonormal basis in $L^2(\Omega)$. Then the space $\dot{H}^{\gamma}
(\Omega)$ can be defined as
\begin{equation*}
  \dot H^{\gamma}(\Omega) = \Big\{v\in L^2(\Omega): \sum_{j=1}^\infty \lambda_j^{\gamma}(v,\varphi_j)^2<\infty\Big\}.
\end{equation*}
In particular, $\dot H^{2}(\Omega)=H^1_0(\Omega)\cap H^2(\Omega)$, $\dot H^{1}(\Omega)=H^1_0(\Omega)$ and $\dot H^{0}(\Omega)=L^2(\Omega)$.
For $\gamma\in[0,2]$ we also denote by $\dot H^{-\gamma}(\Omega)$ the dual space of $\dot H^{\gamma}(\Omega)$. Then the norm
of $\dot H^{-\gamma}(\Omega)$ satisfies
\begin{equation*}
\|v\|_{\dot H^{-\gamma}(\Omega)}
=\|A(t)^{-\frac{\gamma}{2}}v\|_{L^2(\Omega)}\quad\forall\, v\in\dot H^{-\gamma}(\Omega),\,\,\forall\, t\in[0,T].
\end{equation*}

In this section, we prove the following regularity results.
\begin{theorem}[Homogeneous problem]\label{thm:reg-homo}
If $a(x,t)$ satisfies \eqref{Cond-1}-\eqref{Cond-2}, $u_0\in \dot H^{2\gamma}(\Omega)$ with
$\gamma\in [0,1]$ and $f\equiv0$, then for all $t\in(0,T]$ and $k=0,\ldots,K$, the solution
$u(t)$ to problem \eqref{eqn:pde} satisfies
\begin{equation*}
   \Big\|  \frac{\d^k}{\d t^k}(t^{k} u(t))\Big\|_{\dot H^{2\beta}(\Omega)} \le c t^{-(\beta-\gamma)\alpha} \| u_0  \|_{\dot H^{2\gamma}(\Omega)} ,
   \quad\forall\, \beta\in [\gamma,1].
\end{equation*}
\end{theorem}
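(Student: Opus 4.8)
The plan is to prove the estimate by a perturbation (parametrix) argument that freezes the diffusion coefficient at the singular time $t=0$, reducing to the time-independent case for which sharp smoothing estimates are available via Laplace transform and contour integration, and then controlling the perturbation term $(A(0)-A(t))u(t)$ by a bootstrapping/Gronwall argument. First I would write the solution $u(t)$ of \eqref{eqn:pde} with $f\equiv 0$ as the solution of the reformulated equation $^R\Dal(u-u_0) + A(0)u(t) = (A(0)-A(t))u(t)$, so that $u(t) = E(t)u_0 + \int_0^t \bar E(t-s)(A(0)-A(s))u(s)\,\d s$, where $E(t)$ and $\bar E(t)$ are the solution operators for the constant-coefficient operator $A(0)$, represented as Dunford--Taylor integrals $E(t) = \frac{1}{2\pi i}\int_{\Gamma_{\theta,\kappa}} e^{zt} z^{\alpha-1}(z^\alpha + A(0))^{-1}\,\d z$ over a suitable sector contour. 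The key known bounds for the constant-coefficient case are $\|E^{(k)}(t)v\|_{\dot H^{2\beta}} \le c t^{-k-(\beta-\gamma)\alpha}\|v\|_{\dot H^{2\gamma}}$ and the analogous bound for $\bar E$, obtained by differentiating under the integral sign and using the resolvent estimate $\|(z^\alpha+A(0))^{-1}\| \le c|z|^{-\alpha}$ together with $\|A(0)^{\beta-\gamma}(z^\alpha+A(0))^{-1}\| \le c|z|^{-\alpha(1-\beta+\gamma)}$ and deforming the contour with $|z| \sim 1/t$.

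Next I would handle the weighted derivatives $\frac{\d^k}{\d t^k}(t^k u(t))$. The natural tool here is the identity for the operator $\vartheta_k := \frac{\d^k}{\d t^k}(t^k \cdot)$, which one can expand via the Leibniz rule as a combination of $t^j u^{(j)}(t)$ for $j \le k$; equivalently, and more usefully for contour estimates, one uses that applying $\vartheta_k$ to $E(t)v = \frac{1}{2\pi i}\int e^{zt} K(z)v\,\d z$ produces $\frac{1}{2\pi i}\int e^{zt} p_k(zt) K(z) v\,\d z$ for a polynomial $p_k$ of degree $k$, so that along the contour $|z| \sim 1/t$ the factor $p_k(zt)$ is bounded and the same smoothing estimate $\|\vartheta_k E(t)v\|_{\dot H^{2\beta}} \le c t^{-(\beta-\gamma)\alpha}\|v\|_{\dot H^{2\gamma}}$ follows with no loss. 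The genuinely new part is the convolution term: I would commute $\vartheta_k$ with the time integral, again using that $\vartheta_k$ acting on a convolution $\int_0^t \bar E(t-s)g(s)\,\d s$ can be rewritten, after a change of variables $s = t\sigma$, so that the $t$-derivatives fall partly on $\bar E(t(1-\sigma))$ and partly on $g(t\sigma)$; this is exactly where one needs the regularity of the data $g(s) = (A(0)-A(s))u(s)$ expressed again in weighted-derivative form, i.e. one needs to know $\|\vartheta_j[(A(0)-A(s))u(s)]\|$ for $j \le k$, which by \eqref{Cond-2} (the bounds on $\partial_t^k a_{ij}$ and their spatial gradients up to order $K+1$) reduces to $\|\vartheta_j u(s)\|_{\dot H^{2\beta'}}$ for an appropriate $\beta'$ one derivative-order lower in the elliptic scale — and $A(0)-A(s) = O(s)$ near $s=0$ supplies an extra power of $s$ that compensates the singularity.

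The argument is therefore a closed induction on $k$ (and, within each $k$, on the regularity index $\beta$), with a Gronwall/fixed-point step in the weighted-norm space $\{g : \sup_{t}\, t^{(\beta-\gamma)\alpha}\|\vartheta_k g(t)\|_{\dot H^{2\beta}} < \infty\}$: assuming the estimate for all smaller $k$ and for the same $k$ with $\beta$ replaced by a slightly smaller exponent, one plugs it into the right-hand side of the convolution identity, uses that $(A(0)-A(s))$ gains a factor $s$ and maps $\dot H^{2\beta}$ to $\dot H^{2\beta-1}$ (or to $\dot H^{2\beta}$ at the cost of one spatial derivative on $a$, both allowed by \eqref{Cond-2}), and checks that the resulting time-integral $\int_0^t (t-s)^{\cdots} s^{\cdots}\,\d s$ is a Beta integral with an exponent strictly better than the target, so the perturbation contribution is absorbed. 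I expect the main obstacle to be precisely the bookkeeping of how $\vartheta_k$ distributes over the convolution and over the product $a(x,t)\nabla u$: one must show that no derivative is "wasted", i.e. that each of the $k$ time-derivatives in $\vartheta_k$ lands either on $\bar E$ (where it costs $t^{-1}$, harmless after multiplying by the $t^k$ weight) or on $u$ in its own weighted form $\vartheta_j u$ (where the induction hypothesis applies), with the cross terms controlled by the extra factor $s$ from $A(0)-A(s)$; making this accounting rigorous, together with verifying that the interpolation-space machinery $D(A(t)^\gamma) = \dot H^{2\gamma}$ can be applied uniformly in $t$ to move between $A(0)$-norms and $A(s)$-norms, is the technical heart of the proof.
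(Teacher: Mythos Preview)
Your overall strategy---perturbation to a time-independent operator, contour-integral smoothing estimates, and induction on $k$ closed by a Gronwall step---is exactly the paper's. The concrete issue is the choice of freezing point. You freeze at $t=0$ and plan to use that $(A(0)-A(s))=O(s)$ ``compensates the singularity''; but in the convolution $\int_0^t A\bar E(t-s)(A(0)-A(s))u(s)\,\d s$ the singularity of $\|A\bar E(t-s)\|$ sits at $s=t$ (it behaves like $(t-s)^{-1}$ there), not at $s=0$. The factor $s$ therefore does nothing, and the kernel you feed into Gronwall is $s(t-s)^{-1}$, which is non-integrable near $s=t$; the perturbation cannot be absorbed. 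The paper fixes this by freezing at the \emph{evaluation time} $t_*$: the perturbation becomes $A_*-A(s)$, Lemma~\ref{lem:conti-A} supplies a factor $|t_*-s|$, and this cancels the $(t_*-s)^{-1}$ from $\|A_*E_*(t_*-s)\|$ exactly, yielding a bounded kernel and a clean Gronwall inequality. Since $t_*$ is arbitrary, the estimate holds for all $t$. Your change of variable $s=t\sigma$ does not rescue the argument: after the substitution the kernel becomes $\sigma(1-\sigma)^{-1}$, with the same non-integrable singularity at $\sigma=1$.

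A second, more technical point: the paper runs the induction on $\|(t^{k+1}u(t))^{(k)}\|_{\dot H^{2\beta}}$ rather than on $\|(t^{k}u(t))^{(k)}\|_{\dot H^{2\beta}}$, distributing the extra power through the binomial expansion $t^{k+1}=[(t-s)+s]^{k+1}$ inside the convolution (see \eqref{eqn:split} and Lemma~\ref{lem:homo}). That additional factor of $t$ is what makes every summand integrable after $k$ differentiations; once the weighted bound is obtained, the stated estimate for $(t^ku)^{(k)}$ follows from Leibniz and the induction hypothesis. Your $s=t\sigma$ bookkeeping is in the same spirit, but with the wrong freezing point the accounting does not close.
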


\begin{theorem}[Inhomogeneous problem]\label{thm:reg-inhomo}
If $a(x,t)$ satisfies \eqref{Cond-1}-\eqref{Cond-2}, $u_0\equiv0$, then for all
$t\in(0,T]$ and $k=0,\ldots,K$, the solution $u(t)$ to problem \eqref{eqn:pde}
satisfies for any $\beta\in[0,1)$
\begin{align*}
\Big\|\frac{\d^k}{\d t^k}(t^ku(t))\Big\|_{\dot H^{2\beta}(\Omega)}
 \leq& c\sum_{j=0}^{k-1}t^{(1-\beta)\alpha+j}\|f^{(j)}(0)\|_{L^2(\Omega)} \\
  &+ c t^k\int_0^{t}(t-s)^{(1-\beta)\alpha-1}\|f^{(k)}(s)\|_{L^2(\Omega)}\d s,
\end{align*}
and similarly for $\beta=1$,
\begin{align*}
\Big\|\frac{\d^k}{\d t^k}(t^ku(t))\Big\|_{\dot H^{2\beta}(\Omega)} \leq
c\sum_{j=0}^k t^{j}\|f^{(j)}(0)\|_{L^2(\Omega)} + ct^k \int_0^t\|f^{(k+1)}(s)\|_{L^2(\Omega)}\d s.
\end{align*}
\end{theorem}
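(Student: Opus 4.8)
The plan is to freeze the diffusion coefficient at $t=0$, treat the difference $A(t)-A(0)$ as a subordinate perturbation, and combine the resulting constant-coefficient solution representation with the operator $\frac{\d^k}{\d t^k}(t^k\,\cdot\,)$; this parallels the argument behind Theorem \ref{thm:reg-homo}. With $u_0\equiv0$, rewrite \eqref{eqn:reform-0} (exactly as in the derivation of \eqref{eqn:fully-correct}) as $^R\partial_t^\alpha u+A(0)u=F(t)$ with $F(t)=f(t)+(A(0)-A(t))u(t)$, and let
\begin{equation*}
\bar E(t)=\frac{1}{2\pi\mathrm{i}}\int_{\Gamma_{\theta,\kappa}}\mathrm{e}^{zt}(z^\alpha+A(0))^{-1}\,\d z
\end{equation*}
be the solution operator of $^R\partial_t^\alpha w+A(0)w=g$, $w(0)=0$, with $\Gamma_{\theta,\kappa}$ the usual Hankel contour, so that $u=\bar E*F$. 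Using the (self-adjoint, sectorial) resolvent bound $\|A(0)^{\nu}(z^\alpha+A(0))^{-1}\|_{L^2(\Omega)\to L^2(\Omega)}\le c|z|^{-(1-\nu)\alpha}$ for $\nu\in[0,1]$ and differentiating under the integral sign, the first step is the smoothing estimate
\begin{equation*}
\Big\|A(0)^{\beta}\frac{\d^k}{\d t^k}\big(t^k\bar E(t)\big)\Big\|_{L^2(\Omega)\to L^2(\Omega)}\le c\,t^{(1-\beta)\alpha-1},\qquad\beta\in[0,1),
\end{equation*}
which follows from Leibniz's rule, since each of $t^k\bar E^{(k)}(t),t^{k-1}\bar E^{(k-1)}(t),\dots$ scales like $t^{(1-\beta)\alpha-1}$; for $\beta=1$ only the weaker bound $\le ct^{-1}$ is available, which is why that case is treated separately.

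I would next bound the forced part $u_1:=\bar E*f$. By Taylor's theorem at $t=0$, $f(s)=\sum_{j=0}^{k-1}\tfrac{s^j}{j!}f^{(j)}(0)+R_k(s)$ with $R_k(s)=\tfrac{1}{(k-1)!}\int_0^s(s-r)^{k-1}f^{(k)}(r)\,\d r$, so $R_k^{(j)}(0)=0$ for $j\le k-1$ and $R_k^{(k)}=f^{(k)}$. For each polynomial source, evaluating $\frac{\d^k}{\d t^k}(t^k\,\cdot\,)$ of $\bar E*\big(\tfrac{s^j}{j!}f^{(j)}(0)\big)$ and invoking the smoothing bound produces exactly the boundary term $t^{(1-\beta)\alpha+j}\|f^{(j)}(0)\|_{L^2(\Omega)}$, the extra $t^j$ coming from the $s^j$ in the source. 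For the remainder, writing $t^k=((t-s)+s)^k=\sum_i\binom{k}{i}(t-s)^is^{k-i}$ inside $\bar E*R_k$ converts the weighted convolution into a sum of terms of the form $\big(s^m\bar E(s)\big)*\big(s^{k-m}R_k(s)\big)$; since $R_k$ and its first $k-1$ derivatives vanish at $0$, all $k$ derivatives may be transferred onto the $R_k$-factor with no boundary terms, and the elementary inequalities $(t-s)^{k-i}s^i\le t^k$ and $(s-\sigma)^{l-1}\le(t-\sigma)^{l-1}$, combined with an exchange of the order of integration, collapse everything into $ct^k\int_0^t(t-s)^{(1-\beta)\alpha-1}\|f^{(k)}(s)\|_{L^2(\Omega)}\,\d s$. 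This proves the theorem for the frozen problem, i.e. with $F$ replaced by $f$.

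It remains to control the perturbation part $u_2:=u-u_1=\bar E*\big((A(0)-A(t))u\big)$. By \eqref{Cond-1}--\eqref{Cond-2}, the operator $B(t):=A(0)-A(t)=\nabla\cdot\big((a(\cdot,0)-a(\cdot,t))\nabla\,\cdot\,\big)$ is lower order, with $\|B(t)\|_{\dot H^2(\Omega)\to L^2(\Omega)}\le ct$ and $\|B(t)\|_{\dot H^1(\Omega)\to\dot H^{-1}(\Omega)}\le ct$, its time-derivatives $\frac{\d^m}{\d t^m}B(t)=-\partial_t^mA(t)$ bounded from $\dot H^2(\Omega)$ into $L^2(\Omega)$, and, crucially, $B(0)=0$. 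Substituting $u=u_1+u_2$ turns the identity for $u_2$ into a Volterra equation $u_2=\bar E*(Bu_2)+\bar E*(Bu_1)$ with a weakly singular kernel that gains one power of the running time. I would then close the argument by a bootstrap/generalized Gronwall procedure: first the case $k=0$, bounding $\|u(t)\|_{\dot H^{2\beta}(\Omega)}$ for small $\beta$ from the negative-norm mapping properties of $B$ and $\bar E$ and a Gronwall inequality with kernel $(t-s)^{(1-\beta)\alpha-1}$; then propagating this to all $\beta\in[0,1)$ and successively to the weighted derivatives $\frac{\d^k}{\d t^k}(t^ku(t))$, at each stage using the extra factor $s$ supplied by $B$ to absorb the endpoint singularity. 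Differentiating $\bar E*(Bu)$ also generates boundary contributions, but these vanish because $B(0)=0$ and $u(0)=0$, so $u_2$ inherits exactly the bound obtained for $u_1$. Finally, the borderline case $\beta=1$ is handled by integrating by parts once in $s$ in the relevant convolutions, which replaces the non-integrable kernel $(t-s)^{-1}$ by a convolution against $f^{(k+1)}$ (respectively against $\partial_tA(t)u$), giving the stated estimate.

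The main obstacle is this last step. Because the source $(A(0)-A(t))u$ contains $u$ itself and, once differentiated, its time derivatives, which are genuinely singular at $t=0$ for incompatible $f$, the desired inequality is self-referential; closing it requires a careful simultaneous bootstrap over the regularity index $\beta$ and the differentiation order $k$, with all boundary terms at $t=0$ tracked and cancelled and all weakly singular Gronwall kernels controlled. The limit $\beta\to1$ is the most delicate point, and is the reason the $\beta=1$ estimate takes the slightly different form involving $f^{(k+1)}$.
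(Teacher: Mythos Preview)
Your treatment of the forced part $u_1=\bar E*f$ is essentially correct and parallels the paper's Lemma~\ref{lem:bdd-f}. The gap is in the perturbation step. By freezing at $t=0$, the operator $B(s)=A(0)-A(s)$ supplies, via Lemma~\ref{lem:conti-A}, a factor $s$; but the kernel singularity you must cancel sits at $s=t$, not at $s=0$. Concretely, in the $k=0$ case the relevant term is
\[
\int_0^t A(0)\bar E(t-s)\,A(0)^{\beta}\bigl(I-A(0)^{-1}A(s)\bigr)u(s)\,\d s,
\]
and here $\|A(0)\bar E(t-s)\|\le c(t-s)^{-1}$ while $\|A(0)^{\beta}(I-A(0)^{-1}A(s))u(s)\|_{L^2(\Omega)}\le cs\|A(0)^{\beta}u(s)\|_{L^2(\Omega)}$. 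The product $(t-s)^{-1}s$ is not integrable near $s=t$, so the Gronwall inequality does not close. If instead you place all of $A(0)^\beta$ on the kernel, the weight $(t-s)^{(1-\beta)\alpha-1}$ is integrable, but now the perturbation contributes $\|B(s)u(s)\|_{L^2(\Omega)}\le cs\|u(s)\|_{\dot H^{2}(\Omega)}$, a strictly stronger norm than the $\dot H^{2\beta}(\Omega)$ quantity on the left; the inequality is no longer self-referential, and any bootstrap through $\beta=1$ would drag an extra derivative of $f$ into the final bound, contradicting the stated estimate for $\beta<1$.

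The paper's proof avoids this by freezing at a \emph{generic} point $t_*\in(0,T]$ and evaluating the representation \eqref{eqn:Sol-expr-u} at $t=t_*$. The perturbation $(A_*-A(s))$ then carries the factor $|t_*-s|$, which exactly cancels the $(t_*-s)^{-1}$ singularity of $A_*E_*(t_*-s)$ and leaves a bounded Gronwall kernel (this is the mechanism in the $k=0$ step of Theorem~\ref{thm:reg-homo} and in Lemma~\ref{lem:inhomo}). Since $t_*$ is arbitrary, the resulting bound holds at every time. Your plan can be repaired by making precisely this change; the remaining ingredients you describe---the binomial expansion $t^k=((t-s)+s)^k$, the induction on $k$, and the integration by parts for $\beta=1$ using $A_*E_*=(I-F_*)'$---then go through as in the paper.
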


\begin{remark}
These regularity results are identical with that for subdiffusion with a time-independent elliptic
operator \cite[Theorems 2.1--2.2]{SakamotoYamamoto:2011}, \cite[Theorem 2.1]{JinLazarovZhou:2019}.
{All the constants in Theorems \ref{thm:reg-homo} and \ref{thm:reg-inhomo} may grow
with $k$ and blow up as $K\to \infty$, but stay bounded for any finite $K$.}
Further, these constants are uniformly bounded as $\alpha\to1^-$, similar to the prior 
estimates in \cite[Remark 2.1]{JinLiZhou:2017variable}.
\end{remark}

Theorem \ref{thm:reg-inhomo} implies the following estimate for smooth initial data.
\begin{corollary}\label{cor:reg-homo}
If $a(x,t)$ satisfies \eqref{Cond-1}-\eqref{Cond-2}, $u_0\in \dot H^2(\Omega)$ and
$f\equiv 0$, then for $w(t)=u(t)-u_0$, for all $t\in(0,T]$ and $k=0,\ldots,K$, there holds
\begin{align*}
\Big\|\frac{\d^k}{\d t^k}(t^kw(t))\Big\|_{\dot H^{2\beta}(\Omega)}  \le c t^{(1-\beta)\alpha} \|  u_0 \|_{\dot H^2(\Omega)},\quad  \forall\, \beta\in [0,1].
\end{align*}
\end{corollary}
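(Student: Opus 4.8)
\emph{Proof proposal.}
The plan is to reduce the statement to Theorem~\ref{thm:reg-inhomo} by observing that $w=u-u_0$ solves a subdiffusion problem with \emph{zero} initial value. Indeed, writing \eqref{eqn:pde} in the form \eqref{eqn:reform-0} with $f\equiv0$ and using $u=w+u_0$, one finds ${}^R\Dal w+A(t)w=-A(t)u_0=:g(t)$; since $w(0)=0$, the Caputo and Riemann--Liouville derivatives of $w$ coincide, so that
\begin{equation*}
  \Dal w(t)+A(t)w(t)=g(t),\qquad w(0)=0 .
\end{equation*}
Hence $w$ is the solution of \eqref{eqn:pde} with vanishing initial data and source term $g$, and Theorem~\ref{thm:reg-inhomo} applies to $w$.

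Next I would bound the data $g$ and its time derivatives. With $A^{(j)}(t)\phi=-\nabla\cdot(\partial_t^j a(x,t)\nabla\phi)$ one has $g^{(j)}(t)=-A^{(j)}(t)u_0$, and expanding the divergence and using \eqref{Cond-1}--\eqref{Cond-2} together with the elliptic regularity bound $\|\phi\|_{H^2(\Omega)}\le c\|\phi\|_{\dot H^2(\Omega)}$ on the convex domain $\Omega$ gives
\begin{equation*}
  \|g^{(j)}(t)\|_{L^2(\Omega)}\le c\,\|u_0\|_{\dot H^2(\Omega)},\qquad 0\le j\le K+1,\quad t\in[0,T] .
\end{equation*}
The index range $j\le K+1$ is exactly the one needed below and matches \eqref{Cond-2}, so no extra regularity of $a$ is required.

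Then I would insert these bounds into Theorem~\ref{thm:reg-inhomo}. For $\beta\in[0,1)$ it yields
\begin{equation*}
  \Big\|\frac{\d^k}{\d t^k}(t^k w(t))\Big\|_{\dot H^{2\beta}(\Omega)}
  \le c\sum_{j=0}^{k-1}t^{(1-\beta)\alpha+j}\|g^{(j)}(0)\|_{L^2(\Omega)}
  +ct^k\!\int_0^t(t-s)^{(1-\beta)\alpha-1}\|g^{(k)}(s)\|_{L^2(\Omega)}\,\d s,
\end{equation*}
and, using $\int_0^t(t-s)^{(1-\beta)\alpha-1}\,\d s=\big((1-\beta)\alpha\big)^{-1}t^{(1-\beta)\alpha}$ together with the bound on $g$,
\begin{equation*}
  \Big\|\frac{\d^k}{\d t^k}(t^k w(t))\Big\|_{\dot H^{2\beta}(\Omega)}
  \le c\Big(\sum_{j=0}^{k-1}t^{(1-\beta)\alpha+j}+t^{(1-\beta)\alpha+k}\Big)\|u_0\|_{\dot H^2(\Omega)} .
\end{equation*}
Since $t\in(0,T]$, every factor $t^j$ with $j\ge1$ is bounded by $T^j$, so the right-hand side is $\le c\,t^{(1-\beta)\alpha}\|u_0\|_{\dot H^2(\Omega)}$. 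The case $\beta=1$ is identical, invoking the second estimate of Theorem~\ref{thm:reg-inhomo} with $\|g^{(j)}(0)\|_{L^2(\Omega)}$ for $j\le k$ and $\|g^{(k+1)}\|_{L^2(\Omega)}$ (all $\le c\|u_0\|_{\dot H^2(\Omega)}$ since $k+1\le K+1$), and noting that $t^{(1-\beta)\alpha}=1$ in that case.

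Apart from Theorem~\ref{thm:reg-inhomo} itself, the proof is essentially bookkeeping. The only points needing care are the reformulation of $w$ as a zero-initial-value solution with an $L^2(\Omega)$-valued source --- which relies on the Caputo/Riemann--Liouville identity and on $u_0\in\dot H^2(\Omega)$ making $t\mapsto A(t)u_0$ a genuine $L^2(\Omega)$-valued function --- and the verification that the powers $t^j$ ($j\ge1$) may be absorbed into the generic constant, which is legitimate precisely because all the estimates are confined to the finite interval $(0,T]$.
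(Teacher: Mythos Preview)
Your proposal is correct and follows exactly the paper's approach: rewrite $w$ as the solution of the inhomogeneous problem $\Dal w + A(t)w = -A(t)u_0$, $w(0)=0$, and then invoke Theorem~\ref{thm:reg-inhomo}. The paper's proof consists of precisely these two sentences, while you have simply (and correctly) filled in the verification that the source $g^{(j)}(t)=-A^{(j)}(t)u_0$ is bounded in $L^2(\Omega)$ and that the resulting powers of $t$ collapse to $t^{(1-\beta)\alpha}$ on the finite interval $(0,T]$.
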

\begin{proof}
The function $w(t)$ satisfies
$\partial_t^\alpha w(t) + A(t)w(t) = -A(t) u_0$ with $w(0) = 0.$
Then the assertion follows directly from Theorem \ref{thm:reg-inhomo}.\qed
\end{proof}

The rest of this section is devoted to the proof of Theorems \ref{thm:reg-homo} and \ref{thm:reg-inhomo}.

\subsection{Preliminaries}
First, we recall some preliminary results \cite{JinLiZhou:nonlinear} on the solution representation
and smoothing properties of solution operators for subdiffusion with a time-independent coefficient, i.e.,
\begin{equation}\label{PDE-independent}
\Dal u(t) + A_*u(t) = g(t),\quad \forall t\in (0,T],\quad \ \mbox{with } u(0)=u_0,
\end{equation}
where $A_*=A(t^*)$, for some fixed $t_*\in[0,T]$ independent of $t\in(0,T]$. By means of Laplace transform, the solution $u$ of
\eqref{PDE-independent} can be represented by (cf. \cite[Section 2]{JinLazarovZhou:2019} and \cite[Section 2]{JinLiZhou:nonlinear})
\begin{align}\label{eqn:Sol-expr-u-const}
u(t)= F_*(t)u_0 + \int_0^t E_*(t-s) g(s) \d s ,
\end{align}
where the operators $F_*(t)$ and $E_*(t)$ are respectively defined by
\begin{align}
&F_*(t):=\frac{1}{2\pi {\rm i}}\int_{\Gamma_{\theta,\delta }}e^{zt} z^{\alpha-1} (z^\alpha+A_*)^{-1}\, \d z , \label{eqn:EF1}\\
&E_*(t):=\frac{1}{2\pi {\rm i}}\int_{\Gamma_{\theta,\delta}}e^{zt}  (z^\alpha+A_*)^{-1}\, \d z , \label{eqn:EF2}
\end{align}
with the contour $\Gamma_{\theta,\delta}$ (oriented with an increasing imaginary part):
\begin{equation}\label{contour-Gamma}
  \Gamma_{\theta,\delta}=\left\{z\in \mathbb{C}: |z|=\delta, |\arg z|\le \theta\right\}\cup
  \{z\in \mathbb{C}: z=\rho e^{\pm\mathrm{i}\theta}, \rho\ge \delta\} .
\end{equation}
Throughout, we choose a fixed angle $\theta \in(\frac{\pi}{2},\pi)$ so that
$$
z^{\al} \in \Sigma_{\al\theta}
\quad\mbox{for}\quad
z\in\Sigma_{\theta}:=\{z\in\mathbb{C}\backslash\{0\}: |{\rm arg}(z)|\leq\theta\} .
$$
From the definitions \eqref{eqn:EF1} and \eqref{eqn:EF2}, we deduce
\begin{equation}\label{eqn:AE=F'}
A_*E_*(t) = (I- F_*(t))' ,
\end{equation}
which follows by straightforward computation
\begin{align*}
  (I-F_*(t))' & = - \frac{1}{2\pi {\rm i}}\int_{\Gamma_{\theta,\delta }}e^{zt} z^{\alpha} (z^\alpha+A_*)^{-1}\, \d z\\
    & = -\frac{1}{2\pi {\rm i}}\int_{\Gamma_{\theta,\delta }}e^{zt} (I- A_*(z^\alpha+A_*)^{-1})\, \d z
    %& = A_*\frac{1}{2\pi {\rm i}}\int_{\Gamma_{\theta,\delta }}e^{zt} (z^\alpha+A_*)^{-1}\, \d z
    = A_*E_*(t).
\end{align*}

The next lemma summarizes the smoothing properties of $F_*(t)$ and $E_*(t)$,
where $\|\cdot\|$ denotes the operator norm from $L^2(\Omega)$ to $L^2(\Omega)$.
\begin{lemma}\label{lem:smoothing}
For any integer $k=0,1,\ldots,$ the operators $F_*$ and $E_*$ defined in
 \eqref{eqn:EF1}-\eqref{eqn:EF2} satisfy for any $t\in (0,T]$
\begin{align*}
{\rm(i)}\quad & \mbox{$t^{-\alpha}\|A_*^{-1}(I-F_*(t))\|+t^{1-\alpha}\|A_*^{-1}F_*'(t)\|\le c$};\\
{\rm(ii)}\quad & \mbox{$t^{k+1-\alpha}\|E_*^{(k)}(t)\|+t^{k+1}\|A_*E_*^{(k)}(t)\|+t^{k+1+\alpha}\|A_*^2E_*^{(k)}(t)\|\le c $};\\
{\rm(iii)}\quad & \mbox{$t^k\|F_*^{(k)}(t)\|+t^{k+\alpha}\|A_*F_*^{(k)}(t)\| \leq c $}.
\end{align*}
\end{lemma}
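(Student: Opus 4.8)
The plan is to derive all three sets of bounds directly from the contour integral representations \eqref{eqn:EF1}--\eqref{eqn:EF2} via the standard resolvent estimate together with the deformation $\delta = 1/t$ of the contour $\Gamma_{\theta,\delta}$. The starting point is the sectorial resolvent bound $\|(z^\alpha + A_*)^{-1}\| \le c|z|^{-\alpha}$ for $z \in \Sigma_\theta$, which holds because $A_*$ is a positive definite elliptic operator (hence sectorial of angle $0$, or any angle $<\pi/2$) and $z^\alpha \in \Sigma_{\alpha\theta}$ with $\alpha\theta$ still within the sector of analyticity of the resolvent of $A_*$; this also gives $\|A_*(z^\alpha+A_*)^{-1}\| \le c$ and, by composition, $\|A_*^2(z^\alpha+A_*)^{-1}\| \le c|z|^\alpha$.

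First I would prove (ii). Differentiating \eqref{eqn:EF2} under the integral sign $k$ times brings down a factor $z^k$, so $E_*^{(k)}(t) = \frac{1}{2\pi\mathrm i}\int_{\Gamma_{\theta,\delta}} e^{zt} z^k (z^\alpha+A_*)^{-1}\,\d z$. Applying $A_*^m$ ($m=0,1,2$) and the resolvent bounds gives an integrand bounded by $c\,e^{\mathrm{Re}(z)t}|z|^{k+(m-1)\alpha}$. Parametrizing $\Gamma_{\theta,\delta}$ into the two rays $\rho e^{\pm\mathrm i\theta}$ ($\rho\ge\delta$) and the arc $|z|=\delta$, and choosing $\delta = 1/t$, the ray contributions are controlled by $\int_{1/t}^\infty e^{c\rho t\cos\theta}\rho^{k+(m-1)\alpha}\,\d\rho$ (with $\cos\theta<0$ since $\theta>\pi/2$), which after the substitution $\rho t = r$ yields $c\,t^{-(k+1+(m-1)\alpha)}$; the arc contributes $\int_{-\theta}^{\theta} e^{\cos\psi}\,t^{-(k+1+(m-1)\alpha)}\,\d\psi$, of the same order. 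Summing over $m=0,1,2$ gives exactly the three terms in (ii).

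Next, for (iii), the same differentiation of \eqref{eqn:EF1} gives $F_*^{(k)}(t) = \frac{1}{2\pi\mathrm i}\int_{\Gamma_{\theta,\delta}} e^{zt} z^{\alpha-1+k}(z^\alpha+A_*)^{-1}\,\d z$, so the integrand for $A_*^m F_*^{(k)}(t)$ ($m=0,1$) is bounded by $c\,e^{\mathrm{Re}(z)t}|z|^{k-1+m\alpha}$, and the identical contour estimate with $\delta=1/t$ produces $c\,t^{-(k+m\alpha)}$, which is (iii). Finally, (i) follows the same pattern but requires an extra subtraction trick to handle the non-integrable singularity: write $A_*^{-1}(I - F_*(t)) = \frac{1}{2\pi\mathrm i}\int_{\Gamma_{\theta,\delta}} e^{zt} z^{-1}\bigl(I - z^{\alpha-1}\cdot z\,(z^\alpha+A_*)^{-1}\bigr)A_*^{-1}\,\d z$; more cleanly, use $A_*^{-1}(I-F_*(t)) = A_*^{-1} - \frac{1}{2\pi\mathrm i}\int_{\Gamma_{\theta,\delta}} e^{zt}z^{\alpha-1}(z^\alpha+A_*)^{-1}A_*^{-1}\,\d z$ and note $A_*^{-1} = \frac{1}{2\pi\mathrm i}\int_{\Gamma_{\theta,\delta}} e^{zt} z^{-1} A_*^{-1}\,\d z$ (since the integrand is analytic and decays, the contour may be closed), so that
\[
A_*^{-1}(I-F_*(t)) = \frac{1}{2\pi\mathrm i}\int_{\Gamma_{\theta,\delta}} e^{zt}\Bigl(z^{-1}A_*^{-1} - z^{\alpha-1}(z^\alpha+A_*)^{-1}A_*^{-1}\Bigr)\d z = \frac{1}{2\pi\mathrm i}\int_{\Gamma_{\theta,\delta}} e^{zt} z^{-1}(z^\alpha+A_*)^{-1}\,\d z,
\]
whose integrand is bounded by $c\,e^{\mathrm{Re}(z)t}|z|^{-1-\alpha}$; with $\delta=1/t$ this gives $c\,t^\alpha$. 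The term $A_*^{-1}F_*'(t) = \frac{1}{2\pi\mathrm i}\int_{\Gamma_{\theta,\delta}} e^{zt} z^{\alpha-1}(z^\alpha+A_*)^{-1}A_*^{-1}\cdot z\,\d z = -\frac{1}{2\pi\mathrm i}\int_{\Gamma_{\theta,\delta}} e^{zt} z^{-1}(z^\alpha+A_*)^{-1}\,\d z$ (after using $(z^\alpha+A_*)^{-1}A_*^{-1} = z^{-\alpha}(A_*^{-1} - (z^\alpha+A_*)^{-1})$ and discarding the analytic piece), so it is bounded by $c\,t^{\alpha-1}$.

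The only genuinely delicate point is the bookkeeping in (i): the naive bound $|z^{\alpha-1}(z^\alpha+A_*)^{-1}A_*^{-1}| \le c|z|^{-1-\alpha}$ is already integrable near $z=0$, so in fact the subtraction is not strictly needed for convergence, but one must be careful that the contour can be shrunk/closed consistently when isolating $A_*^{-1}$, and that the factor $z^{\alpha-1}$ (with a branch cut along the negative real axis, avoided by $\Gamma_{\theta,\delta}$) is handled correctly. Everything else is the routine $\delta = 1/t$ contour estimate applied uniformly, and I would present (ii) in full detail and then simply indicate that (i) and (iii) follow by the same argument with the obvious changes of exponent.
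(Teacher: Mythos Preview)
Your approach is correct and essentially the same as the paper's: both use the contour representations \eqref{eqn:EF1}--\eqref{eqn:EF2}, the sectorial resolvent bounds $\|(z^\alpha+A_*)^{-1}\|\le c|z|^{-\alpha}$ and $\|A_*(z^\alpha+A_*)^{-1}\|\le c$, and the deformation $\delta=1/t$. The paper handles part (i) more directly by invoking the identity \eqref{eqn:AE=F'} to write $A_*^{-1}F_*'(t)=-E_*(t)$ and then reading off the bound from (ii); your displayed final formula for $A_*^{-1}F_*'(t)$ carries a stray factor $z^{-1}$ (the correct reduction yields $-E_*(t)$ with integrand $(z^\alpha+A_*)^{-1}$, not $z^{-1}(z^\alpha+A_*)^{-1}$), though your stated bound $ct^{\alpha-1}$ is right.
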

\begin{proof}
The assertions for $k=0,1$ were already given in \cite[Lemma 2.2]{JinLiZhou:2017variable}. The proof
for $k>1$ is similar. For example, in part (i), by \eqref{eqn:AE=F'} and choosing $\delta=t^{-1}$ in
the contour $\Gamma_{\theta,\delta}$ and letting $\hat z=tz$:
\begin{align*}
\|A_*^{-1}F_*'(t)\| &=\|E_*(t)\| \le\frac{1}{2\pi}\int_{\Gamma_{\theta,\delta }}e^{\Re(z)t}\|(z^\alpha+A_*)^{-1}\|\, |\d z| \\
&\le ct^{\alpha-1} \frac{1}{2\pi}\int_{\Gamma_{\theta,1}}e^{\Re(\hat z)} |\hat z|^{-\alpha} |\d \hat z| \\
&\le ct^{\alpha-1}\frac{1}{2\pi}\int_{\Gamma_{\theta,1}}e^{\cos(\theta)|\hat z|} (1+|\hat z|^{-1}) |\d \hat z| \le c t^{\alpha-1},
\end{align*}
and in part (iii) with $k=0$, $\|F_*(t)\|$ can be bounded by
\begin{align*}
\|F_*(t)\|&\le\frac{1}{2\pi}\int_{\Gamma_{\theta,\delta }}e^{\Re(z)t} |z|^{\alpha-1} \|(z^\alpha+A_*)^{-1}\|\, |\d z| \\
&\le\frac{1}{2\pi}\int_{\Gamma_{\theta,\delta }}e^{\Re(z)t} |z|^{-1} |\d z|\le c.
\end{align*}
The proof of \eqref{eqn:AE=F'} gives $A_*E_*(t)=-\frac{1}{2\pi\mathrm{i}}\int_{\Gamma_{\theta,\delta}}e^{zt}z^\alpha(z^\alpha +A_*)^{-1}\d z$, and since $\|A_*(z_\alpha+A_*)^{-1}\|\leq c$, we deduce
\begin{equation*}
  \|A_*^2 E_*(t)\| \leq \frac{1}{2\pi}\int_{\Gamma_{\theta,\delta}}e^{\Re(z)t}|z|^\alpha |\d z| \leq ct^{-1-\alpha}.
\end{equation*}
All other estimates can be proved similarly and the details are omitted. Note that all the
constants $c$ remain bounded as $\alpha\rightarrow 1^-$. \qed
\end{proof}

The following perturbation estimate {\cite[Corollary 3.1]{JinLiZhou:2017variable}} will be used extensively. In particular, it implies that $\|A(s)^{-1}A(t)\|\leq c$ for any $s,t\in [0,T]$, and by interpolation also, $\|A(s)^{-\beta}A(t)^\beta\|\leq c$ for any $\beta\in(0,1)$.
\begin{lemma}\label{lem:conti-A}
Under conditions \eqref{Cond-1}--\eqref{Cond-2}, for any $\beta\in[0,1]$, there holds
\begin{align}
  \|(I-A(t)^{-1}A(s))v\|_{\dot H^{2\beta}(\Omega)}&\leq c|t-s|\|v\|_{\dot H^{2\beta}(\Omega)}, \quad \forall v\in {\dot H^{2\beta}(\Omega)}.
\end{align}
%{\color{blue}The constant $c$ above is independent of $\alpha$, because there is no $\alpha$ on the left-hand side of the inequality.}
\end{lemma}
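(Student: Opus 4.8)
The plan is to reduce the general $\beta\in[0,1]$ case to the endpoints $\beta=0$ and $\beta=1$ via the complex interpolation method, exactly as was used above to pass from $D(A(t))=\dot H^2(\Omega)$ to $D(A(t)^\gamma)=\dot H^{2\gamma}(\Omega)$. Concretely, fix $s,t\in[0,T]$ and consider the operator $B=I-A(t)^{-1}A(s)$. I would first establish the two endpoint bounds $\|Bv\|_{L^2(\Omega)}\le c|t-s|\,\|v\|_{L^2(\Omega)}$ and $\|Bv\|_{\dot H^2(\Omega)}\le c|t-s|\,\|v\|_{\dot H^2(\Omega)}$; since $\dot H^{2\beta}(\Omega)=(L^2(\Omega),\dot H^2(\Omega))_{[\beta]}$, the interpolation inequality then yields $\|Bv\|_{\dot H^{2\beta}(\Omega)}\le c|t-s|\,\|v\|_{\dot H^{2\beta}(\Omega)}$ for all $\beta\in[0,1]$, with the constant $c$ depending only on the constants in Cond-1--Cond-2 (not on $s,t$).

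For the two endpoint estimates I would work with the sesquilinear forms. Write $A(t)^{-1}A(s)v = w$, i.e. $A(t)w = A(s)v$, which in weak form reads $\int_\Omega a(x,t)\nabla w\cdot\nabla\phi\,\d x = \int_\Omega a(x,s)\nabla v\cdot\nabla\phi\,\d x$ for all $\phi\in H_0^1(\Omega)$. Subtracting $\int_\Omega a(x,t)\nabla v\cdot\nabla\phi\,\d x$ from both sides gives
\begin{equation*}
\int_\Omega a(x,t)\nabla(w-v)\cdot\nabla\phi\,\d x = \int_\Omega (a(x,s)-a(x,t))\nabla v\cdot\nabla\phi\,\d x .
\end{equation*}
Taking $\phi = w-v = -Bv$ and using the lower bound $\lambda^{-1}|\xi|^2\le a(x,t)\xi\cdot\xi$ from Cond-1 together with the Lipschitz-in-$t$ bound $|a_{ij}(x,s)-a_{ij}(x,t)|\le c|t-s|$ (which follows by integrating the bound on $\partial_t a_{ij}$ in Cond-2), one obtains $\|\nabla Bv\|_{L^2}\le c|t-s|\,\|\nabla v\|_{L^2}$, hence via Poincar\'e the $\beta=0$ estimate $\|Bv\|_{L^2}\le c|t-s|\,\|v\|_{L^2}$ after controlling $\|v\|_{H^1}$ by $\|v\|_{L^2}$ using $\|A(s)^{1/2}v\|_{L^2}\simeq\|v\|_{\dot H^1}$ — actually, more cleanly, I would phrase everything in the $\dot H^{2\gamma}$ norms directly and note $\|Bv\|_{\dot H^1}\le c|t-s|\|v\|_{\dot H^1}$, then interpolate between $\dot H^1$ and $\dot H^{-1}$ or between $L^2$ and $\dot H^2$ as needed. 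For the $\beta=1$ endpoint I would instead use elliptic regularity: from $A(t)(w-v) = (A(t)-A(s))v = \nabla\cdot((a(\cdot,s)-a(\cdot,t))\nabla v)$ and the $H^2$-regularity $\|A(t)^{-1}\|_{\dot H^0\to\dot H^2}\le c$ (uniform in $t$, a consequence of Cond-1--Cond-2 and convexity of $\Omega$), it suffices to bound $\|\nabla\cdot((a(\cdot,s)-a(\cdot,t))\nabla v)\|_{L^2}$; expanding the divergence produces a term with $\nabla(a(\cdot,s)-a(\cdot,t))$ times $\nabla v$ and a term with $(a(\cdot,s)-a(\cdot,t))$ times $D^2 v$, and both are $\le c|t-s|\,\|v\|_{\dot H^2}$ using $|\nabla_x(a(\cdot,s)-a(\cdot,t))|\le c|t-s|$ (the $k=0$ case of the bound on $\nabla_x\partial_t^k a_{ij}$ in Cond-2, integrated in $t$) and $\|v\|_{H^2}\simeq\|v\|_{\dot H^2}$.

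The main obstacle I anticipate is the $\beta=1$ endpoint: one must be careful that the constant in the $H^2$ elliptic regularity estimate $\|A(t)^{-1}g\|_{H^2}\le c\|g\|_{L^2}$ is genuinely uniform in $t\in[0,T]$, which relies on the uniform ellipticity Cond-1, the uniform $W^{1,\infty}$-in-$x$ control of $a(\cdot,t)$ from Cond-2, and the convexity of the polygonal domain $\Omega$; this is where the regularity hypotheses on $a$ are essential and where the argument for time-independent coefficients does not transfer verbatim. Once the two endpoint inequalities are in hand with $t$-uniform constants, the interpolation step is routine, and this is presumably why the paper cites it as a corollary of \cite{JinLiZhou:2017variable} rather than reproving it.
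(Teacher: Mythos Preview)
The paper does not actually prove this lemma; it is quoted verbatim from \cite[Corollary~3.1]{JinLiZhou:2017variable}. So there is no proof here to compare against, only your outline to assess on its own merits.

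Your overall strategy---establish the endpoint bounds at $\beta=0$ and $\beta=1$ and then interpolate via $\dot H^{2\beta}(\Omega)=(L^2(\Omega),\dot H^2(\Omega))_{[\beta]}$---is the right one, and your $\beta=1$ argument through uniform $H^2$ elliptic regularity is correct and uses conditions \eqref{Cond-1}--\eqref{Cond-2} in exactly the way they are needed. The energy computation you give (testing the weak form with $\phi=w-v$) yields the $\dot H^1$ bound $\|Bv\|_{\dot H^1}\le c|t-s|\,\|v\|_{\dot H^1}$, as you yourself observe.

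The one genuine gap is the $\beta=0$ endpoint. You first try to pass from the $\dot H^1$ estimate to an $L^2$ estimate by ``controlling $\|v\|_{H^1}$ by $\|v\|_{L^2}$,'' which is of course impossible; you then suggest interpolating ``between $L^2$ and $\dot H^2$,'' but the $L^2$ bound is precisely what is to be proved, so that is circular. The clean repair is a duality argument: the $L^2$-adjoint of $B=I-A(t)^{-1}A(s)$ is $B^{*}=(A(t)^{*}-A(s)^{*})A(t)^{-*}$, and for any $\phi\in L^2(\Omega)$ the function $\psi=A(t)^{-*}\phi$ lies in $\dot H^2(\Omega)$ with $\|\psi\|_{\dot H^2}\le c\|\phi\|_{L^2}$ by the same uniform elliptic regularity you invoke at $\beta=1$. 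Expanding the divergence just as in your $\beta=1$ step gives $\|B^{*}\phi\|_{L^2}=\|(A(t)^{*}-A(s)^{*})\psi\|_{L^2}\le c|t-s|\,\|\psi\|_{\dot H^2}\le c|t-s|\,\|\phi\|_{L^2}$, hence $\|B\|_{L^2\to L^2}\le c|t-s|$. With both endpoints in hand, interpolation is routine. (Equivalently, your $\dot H^1$ energy estimate and its dual $\dot H^{-1}$ version already interpolate to give the $L^2$ bound.)
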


The following regularity results for problem \eqref{eqn:pde} were proved in \cite{JinLiZhou:2017variable} (also see
\cite{KubicaYamamoto:2017,DongKim:2019} for related results under different assumptions).
\begin{theorem}\label{thm:reg-0}
Under conditions \eqref{Cond-1}--\eqref{Cond-2}, the solution $u(t)$ of problem \eqref{eqn:pde} satisfies the following estimates:
\begin{itemize}
\item[(i)] If $u_0\in \dot H^{2\gamma}(\Omega)$, with some $\gamma\in[0,1]$, and $f=0$, then
\begin{align*}
 \|u(t)\|_{H^2(\Omega)} \le ct^{-(1-\gamma)\alpha} \|u_0\|_{\dot H^{2\gamma}(\Omega)}\,\,\mbox{and}\,\,
    \| u'(t)\|_{L^2(\Omega)} \le c t^{\gamma\alpha-1} \|u_0  \|_{\dot H^{2\gamma}(\Omega)}.
\end{align*}
\item[(ii)] If $u_0=0$, $f\in C([0,T]; L^2(\Omega))$ and $\int_0^t  (t-s)^{\alpha-1}\| f'(s)
\|_{L^2(\Omega)}\,\d s<\infty$, then
\begin{align*}
   \|u'(t)\|_{L^2(\Omega)} \le c t^{\alpha-1}  \| f(0) \|_{L^2(\Omega)} +c\int_0^t (t-s)^{\alpha-1} \| f'(s)\|_{L^2(\Omega)} \,\d s.
\end{align*}
\end{itemize}
\end{theorem}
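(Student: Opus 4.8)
The plan is to prove both parts by \emph{freezing the diffusion coefficient at the target time} and treating the time-dependence as a perturbation. Fix $t\in(0,T]$ and set $A_*=A(t)$. Rewriting \eqref{eqn:pde} as $\Dal u(\sigma)+A_*u(\sigma)=f(\sigma)+(A(t)-A(\sigma))u(\sigma)$ for all $\sigma\in(0,T]$ --- a constant-coefficient problem with fixed operator $A_*$ and a perturbed source --- the representation \eqref{eqn:Sol-expr-u-const} gives, for every $\sigma$,
\[
u(\sigma)=F_*(\sigma)u_0+\int_0^\sigma E_*(\sigma-r)\bigl[f(r)+(A(t)-A(r))u(r)\bigr]\d r.
\]
The decisive point is that $(A(t)-A(r))=A_*(I-A_*^{-1}A(r))$ carries, by Lemma \ref{lem:conti-A}, a factor $|t-r|$ that is small \emph{precisely where the kernels $E_*(t-r)$ and $A_*E_*(t-r)$ are singular} (near $r=t$); this is why freezing at the target time rather than at $t=0$ is essential. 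Throughout I use the uniform norm-equivalence $\|A(s)^\gamma v\|_{L^2(\Omega)}\simeq\|v\|_{\dot H^{2\gamma}(\Omega)}$ (a consequence of Lemma \ref{lem:conti-A}) to interchange powers of the frozen operator with the $\dot H$-norms. As a preliminary step, applying $A_*^\beta$ to the representation and using Lemma \ref{lem:smoothing}(ii) with the factor $|t-r|$ gives $\|u(t)\|_{L^2(\Omega)}\le c\|u_0\|_{L^2(\Omega)}+c\int_0^t\|u(r)\|_{L^2(\Omega)}\d r$, whence Gronwall yields $L^2$-stability; the same Gronwall scheme with $u_0=0$ and the interpolated bound $\|A_*^\beta E_*(\rho)\|\le c\rho^{\alpha(1-\beta)-1}$ gives the auxiliary smoothing estimate $\|u(t)\|_{\dot H^{2\beta}(\Omega)}\le ct^{\alpha(1-\beta)}\|f\|_{L^\infty(0,t;L^2(\Omega))}$ for $\beta\in[0,1)$, needed later.

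For part (i), apply $A_*^\beta$ ($\beta\in[\gamma,1]$) to the representation with $f=0$. The leading term $A_*^{\beta-\gamma}F_*(t)\,A_*^\gamma u_0$ is bounded by $ct^{-(\beta-\gamma)\alpha}\|u_0\|_{\dot H^{2\gamma}(\Omega)}$ after interpolating Lemma \ref{lem:smoothing}(iii); the perturbation integral, using $\|A_*^{1+\beta}E_*(t-r)\|\le c(t-r)^{-1-\beta\alpha}$, the factor $|t-r|$, and the $L^2$-stability, is of order $t^{1-\gamma\alpha}$ times the leading term, hence lower order. Taking $\beta=1$ gives the $H^2$-bound. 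For $\|u'(t)\|_{L^2(\Omega)}$, differentiate the representation in $\sigma$ and evaluate at $\sigma=t$; the substitution $\rho=\sigma-r$ turns $\frac{\d}{\d\sigma}\int_0^\sigma E_*(\sigma-r)\phi(r)\d r$ into $E_*(\sigma)\phi(0)+\int_0^\sigma E_*(\sigma-r)\phi'(r)\d r$ with $\phi(r)=(A(t)-A(r))u(r)$, so there is no nonintegrable boundary term. This yields $u'(t)=F_*'(t)u_0+E_*(t)(A(t)-A(0))u_0+\int_0^t E_*(t-r)[-A'(r)u(r)+(A(t)-A(r))u'(r)]\d r$. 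Here $\|F_*'(t)u_0\|_{L^2(\Omega)}\le ct^{\gamma\alpha-1}\|u_0\|_{\dot H^{2\gamma}(\Omega)}$ (interpolating the bounds on $\|E_*(t)\|$ and $\|F_*'(t)\|$), the boundary term is $O(1)$ hence below $t^{\gamma\alpha-1}$, the $A'(r)u(r)$ term is controlled by $\|A'(r)u(r)\|_{L^2(\Omega)}\le c\|u(r)\|_{H^2(\Omega)}$ and the $H^2$-bound just proved (giving order $t^{\gamma\alpha}$), and the final term is $\le c\int_0^t\|u'(r)\|_{L^2(\Omega)}\d r$; a singular Gronwall inequality then gives the claim.

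For part (ii), with $u_0=0$ the same differentiation (now $\phi(0)=f(0)$) gives $u'(t)=E_*(t)f(0)+\int_0^t E_*(t-r)[f'(r)-A'(r)u(r)+(A(t)-A(r))u'(r)]\d r$. The first two terms are exactly the asserted bound $ct^{\alpha-1}\|f(0)\|_{L^2(\Omega)}+c\int_0^t(t-r)^{\alpha-1}\|f'(r)\|_{L^2(\Omega)}\d r$, and the last term again closes by Gronwall. The crux --- and the step I expect to be the main obstacle --- is the cross term $\int_0^t E_*(t-r)A'(r)u(r)\d r$: unlike in part (i), the inhomogeneous solution is \emph{not} in $H^2(\Omega)$ for merely bounded $f$, so $\|u(r)\|_{H^2(\Omega)}$ is unavailable. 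I would instead write $A'(r)u(r)=A_*^\theta\,[A_*^{-\theta}A'(r)u(r)]$ with $\theta<1$, and use $\|A_*^\theta E_*(\rho)\|\le c\rho^{\alpha(1-\theta)-1}$ (integrable for $\theta<1$), the mapping property $\|A_*^{-\theta}A'(r)v\|_{L^2(\Omega)}\le c\|v\|_{\dot H^{2(1-\theta)}(\Omega)}$, and the auxiliary estimate $\|u(r)\|_{\dot H^{2(1-\theta)}(\Omega)}\le cr^{\alpha\theta}\|f\|_{L^\infty(0,r;L^2(\Omega))}$; a Beta-function computation then bounds this term by $ct^\alpha\|f\|_{L^\infty(0,t;L^2(\Omega))}$, which is dominated by the target right-hand side since $t^\alpha\le T(t-r)^{\alpha-1}$ for $r\in[0,t]$. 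Verifying the mapping property of $A_*^{-\theta}A'(r)$ under \eqref{Cond-1}--\eqref{Cond-2} and the uniformity in the frozen time $t$ of all constants is the delicate part; together with the small-factor cancellation near $r=t$ and the singular Gronwall closure, this is what makes the argument go through.
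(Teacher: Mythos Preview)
Your approach---freezing the coefficient at the target time $t$, using the constant-coefficient representation \eqref{eqn:Sol-expr-u-const}, exploiting the factor $|t-r|$ from Lemma~\ref{lem:conti-A} to cancel the singularity of $A_*E_*(t-r)$, and closing with Gronwall---is exactly the perturbation strategy the paper employs. (The paper does not prove Theorem~\ref{thm:reg-0} itself but cites it from \cite{JinLiZhou:2017variable}, observing that it is the special case $(k,\beta)=(0,1)$ and $(k,\beta)=(1,0)$ of Theorems~\ref{thm:reg-homo}--\ref{thm:reg-inhomo}.) Your treatment of the $H^2$-bound in (i) and all of part (ii) is correct.

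There is, however, a genuine gap in your argument for $\|u'(t)\|_{L^2(\Omega)}$ in part (i) at the endpoint $\gamma=0$. Your Gronwall closure on $\varphi(t)=\|u'(t)\|_{L^2(\Omega)}$ yields the inequality $\varphi(t)\le ct^{\gamma\alpha-1}\|u_0\|_{\dot H^{2\gamma}(\Omega)}+c\int_0^t\varphi(r)\,\d r$, and for $\gamma=0$ this has $\mu=1$, which Lemma~\ref{lem:gronwall} excludes. More fundamentally, for $u_0\in L^2(\Omega)$ the expected behaviour $\|u'(r)\|_{L^2(\Omega)}\sim r^{-1}$ is \emph{not} integrable on $(0,t)$, so $\int_0^t\varphi(r)\,\d r=\infty$ and the inequality is vacuous---no conclusion can be drawn. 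This is precisely why the paper's proof of Theorem~\ref{thm:reg-homo} works with the weighted quantity $(t^{k+1}u(t))^{(k)}$ rather than differentiating $u$ directly: the extra power of $t$ tames the singularity at the origin, making the Gronwall integrand finite for every $\gamma\in[0,1]$. For $\gamma>0$ your direct differentiation is fine and essentially equivalent to the paper's route, but to cover $\gamma=0$ you must either introduce such a weight or treat that endpoint by a separate argument.
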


Theorem \ref{thm:reg-0} is a special case of Theorems \ref{thm:reg-homo} and \ref{thm:reg-inhomo}
corresponding to $(k,\beta)=(0,1)$ and $(k,\beta)=(1,0)$, respectively. These results were used in \cite{JinLiZhou:2017variable} to prove
first-order convergence of backward Euler CQ. But they are insufficient to prove second-order convergence of the
corrected BDF2--CQ scheme \eqref{eqn:fully-correct}, which requires the regularity results in Theorems \ref{thm:reg-homo}
and \ref{thm:reg-inhomo} for $k= 2$. Below, we prove Theorems \ref{thm:reg-homo} and \ref{thm:reg-inhomo}
for a general nonnegative integer $k$.

\subsection{Proof of Theorems \ref{thm:reg-homo} and \ref{thm:reg-inhomo}}

The overall proof strategy is to employ a perturbation argument \cite{JinLiZhou:nonlinear,JinLiZhou:2017variable}
and then to properly resolve the singularity. Specifically,
for any fixed $t_*\in(0,T]$, we rewrite problem \eqref{eqn:pde} into
\begin{align}\label{eqn:reform}
\left\{\begin{aligned}\Dal u(t) + A_*u(t) &=  (A_*-A(t))u(t) + f(t) ,\quad \forall t\in(0,T],\\
        u(0)&=u_0.
\end{aligned}\right.
\end{align}
By \eqref{eqn:Sol-expr-u-const}, the solution $u(t)$ of \eqref{eqn:reform} is given by
\begin{align}\label{eqn:Sol-expr-u}
u(t)&= F_*(t)u_0+ \int_0^t E_*(t-s)(f(s) + (A_*-A(s))u(s))\d s.
\end{align}

The objective is to estimate the $k$th temporal derivative $u^{(k)}(t):=\frac{\d^k}{\d t^k}u(t)$ in $\dot
H^{2\beta}(\Omega)$ for $\beta\in[0,1]$ using \eqref{eqn:Sol-expr-u}. However, direct differentiation of
$u(t)$ in \eqref{eqn:Sol-expr-u} with respect to $t$ leads to strong singularity that precludes the use of
Gronwall's inequality in Lemma \ref{lem:gronwall}, in order to handle the perturbation term. To overcome
the difficulty, we instead estimate $\|(t^{k+1}u(t))^{(k)}\|_{\dot H^{2\beta}(\Omega)}$ using the
expansion of $t^{k+1}=[(t-s)+s]^{k+1}$ in the the following expression:

\begin{align}
  t^{k+1} u(t) = &  \, t^{k+1} F_*(t)u_0+ t^{k+1}\int_0^t E_*(t-s) f(s)\d s \label{eqn:split}\\
     & + \sum_{m=0}^{k+1} \left(\begin{aligned} & \,\,\,\,\, m \\ &\,k+1 \end{aligned}\right) \int_0^t(t-s)^mE_*(t-s)(A_*-A(s))s^{{k+1}-m}u(s)\d s,\nonumber
\end{align}
where $(\begin{subarray}{c}m\\ k+1\end{subarray})$ denotes binomial coefficients.
One crucial part in the proof is to bound $k$th-order derivatives of the summands in \eqref{eqn:split}.

Now we can give the proof of Theorem \ref{thm:reg-homo}.

\begin{proof}[of Theorem \ref{thm:reg-homo}]
When $k=0$, setting $f=0$ and $t=t_*$ in \eqref{eqn:Sol-expr-u} yields
\begin{equation*}
  A_*^\beta u(t_*) = A_*^\beta F_*(t_*)u_0 + \int_0^{t_*}A_*^\beta E_*(t_*-s)(A_*-A(s))u(s)\d s ,
\end{equation*}
where $\beta\in[\gamma,1]$. By Lemmas \ref{lem:smoothing} and \ref{lem:conti-A},
\begin{align*}
\|A_*^\beta& u(t_*)\|_{L^2(\Omega)}
 \leq \|A_*^{\beta-\gamma} F_*(t_*)A_*^\gamma u_0\|_{L^2(\Omega)} \\
&\qquad+\int_0^{t_*}\|A_*E_*(t_*-s)\|\|A_*^\beta(I-A_*^{-1}A(s))u(s)\|_{L^2(\Omega)}\d s\\
& \leq ct_*^{-(\beta-\gamma)\alpha} \|A_*^\gamma u_0\|_{L^2(\Omega)}
 + c\int_0^{t_*}(t_*-s)\|A_*E_*(t_*-s)\| \|A_*^\beta u(s)\|_{L^2(\Omega)} \d s\\
& \leq ct_*^{-(\beta-\gamma)\alpha} \|u_0\|_{\dot H^{2\gamma}(\Omega)}  + c\int_0^{t_*}\|A_*^\beta u(s)\|_{L^2(\Omega)}\d s.
\end{align*}
This and Gronwall's inequality in Lemma \ref{lem:gronwall} {with $\mu=(\beta-\gamma)\alpha$} yield
\begin{align*}
\|A_*^\beta u(t_*)\|_{L^2(\Omega)} \leq c(1-(\beta-\gamma)\alpha)^{-1}t_*^{-(\beta-\gamma)\alpha} \|u_0\|_{\dot H^{2\gamma}(\Omega)}   .
\end{align*}
In particular, we have $\|A_*^{\frac{\beta+\gamma}{2}} u(t_*)\|_{L^2(\Omega)}
 \leq ct_*^{-\frac{\beta-\gamma}{2}\alpha} \|u_0\|_{\dot H^{2\gamma}(\Omega)}$,
with $c$ being bounded as $\alpha\rightarrow 1^-$. This estimate
and Lemmas \ref{lem:smoothing}(ii) and \ref{lem:conti-A} then imply
\begin{align*}
&\|A_*^\beta u(t_*)\|_{L^2(\Omega)} \le \|A_*^{\beta-\gamma} F_*(t_*)A_*^\gamma u_0\|_{L^2(\Omega)} \\
&\qquad+\int_0^{t_*}\|A_*^{\frac{\beta-\gamma}{2}}A_*E_*(t_*-s)\|\|A_*^{\frac{\beta+\gamma}{2}}(I-A_*^{-1}A(s))u(s)\|_{L^2(\Omega)}\d s\\
& \leq ct_*^{-(\beta-\gamma)\alpha} \|A_*^\gamma u_0\|_{L^2(\Omega)}
 + c\int_0^{t_*}(t_*-s)\|A_*^{\frac{\beta-\gamma}{2}}A_*E_*(t_*-s)\| \|A_*^{\frac{\beta+\gamma}{2}}u(s)\|_{L^2(\Omega)} \d s \\
% & \leq ct_*^{-(\beta-\gamma)\alpha} \|A_*^\gamma u_0\|_{L^2(\Omega)}
% + c\int_0^{t_*}(t_*-s)^{-\frac{\beta-\gamma}{2}\alpha} \|A_*^{\frac{\beta+\gamma}{2}}u(s)\|_{L^2(\Omega)} \d s\\
& \leq c\Big(t_*^{-(\beta-\gamma)\alpha}
 + \int_0^{t_*}(t_*-s)^{-\frac{\beta-\gamma}{2}\alpha}  s^{-\frac{\beta-\gamma}{2}\alpha} \d s \Big)\|u_0\|_{\dot H^{2\gamma}(\Omega)}
\le ct_*^{-(\beta-\gamma)\alpha} \|u_0\|_{\dot H^{2\gamma}(\Omega)}.
\end{align*}
Equivalently, we have
\begin{align*}
\|A_*^\beta t_*u(t_*)\|_{L^2(\Omega)}
&\le ct_*^{1-(\beta-\gamma)\alpha} \|u_0\|_{\dot H^{2\gamma}(\Omega)},
\end{align*}
where $c$ is bounded as $\alpha\to1^-$. This proves the assertion for $ k=0$.

Next we prove the case $1\leq k\le K$ using mathematical induction. Suppose that the assertion holds up to $k-1< K$, and we prove it for $k\leq K$.
Indeed, by Lemma \ref{lem:homo} below,
\begin{align*}
    &\Big\|A_*^\beta\frac{\d^k}{\d t^k}\int_0^t(t-s)^{m} E_*(t-s) (A_*-A(s))s^{k+1-m}u(s) \d s|_{t=t_*}\Big\|_{L^2(\Omega)}\nonumber\\
    \leq &c t_*^{-(\beta-\gamma)\alpha+1}\|u_0\|_{\dot H^{2\gamma}(\Omega)} + c \int_0^{t_*}\|A_*^\beta(s^{k+1}u(s))^{(k)}\|_{L^2(\Omega)}\d s,
\end{align*}
where $m=0,1,\ldots, k+1$. Meanwhile, the estimates in Lemma \ref{lem:smoothing} imply
\begin{align*}
  \big\|A_*^\beta \big(t^{k+1}F_*(t)u_0\big)^{(k)}\big\|_{L^2(\Omega)}\leq ct^{-(\beta-\gamma)\alpha+1}\|u_0\|_{\dot H^{2\gamma}(\Omega)} .
\end{align*}
By applying $A_*^\beta \frac{\d^k}{\d t^k}$ to \eqref{eqn:split}
and using the last two estimates, we obtain
\begin{align*}
  \big\|A_*^\beta(t^{k+1}u(t))^{(k)}|_{t=t_*}\big\|_{L^2(\Omega)}
  \leq& ct_*^{-(\beta-\gamma)\alpha+1} \|u_0\|_{\dot H^{2\gamma}(\Omega)}\\
   &+ c\int_0^{t_*}\|A_*^\beta(s^{k+1}u(s))^{(k)}\|_{L^2(\Omega)}\d s.
\end{align*}
Last, applying the standard Gronwall's inequality, we complete the induction step and also the proof of the theorem.
\qed
\end{proof}

In the proof of Theorem \ref{thm:reg-homo}, we have used the following result.
\begin{lemma}\label{lem:homo}
Under the conditions of Theorem \ref{thm:reg-homo}, for $m=0,\ldots,k+1$, there holds
\begin{align*}
    &\Big\|A_*^\beta\frac{\d^k}{\d t^k}\int_0^t(t-s)^{m}E_*(t-s) (A_*-A(s))s^{k+1-m}u(s) \d s|_{t=t_*}\Big\|_{L^2(\Omega)}\\
    \leq &c t_*^{-(\beta-\gamma)\alpha+1}\|u_0\|_{\dot H^{2\gamma}(\Omega)} + c \int_0^{t_*}\Big\|A_*^\beta(s^{k+1}u(s))^{(k)}\Big\|_{L^2(\Omega)}\d s.
\end{align*}
\end{lemma}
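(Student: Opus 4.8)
Here is how I would approach the proof of Lemma \ref{lem:homo}.

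The plan is to regard the integral as a convolution $\int_0^t g_m(t-s)\,\chi_m(s)\,\d s$ with kernel $g_m(r)=r^mE_*(r)$ and density $\chi_m(s)=(A_*-A(s))\,s^{k+1-m}u(s)$, and to split the $k$ temporal derivatives between the two factors. First I would move $\min(k,m)$ derivatives onto $g_m$, using $\partial_t=-\partial_s$ under the integral; by Lemma \ref{lem:smoothing}(ii) one has $g_m^{(j)}(0^+)=0$ for $j\le m-1$, so no boundary terms appear, and (the extra factor $r^m$ compensating for the extra differentiations) $\|A_*^\sigma g_m^{(\min(k,m))}(r)\|\le c\,r^{-1+(1-\sigma)\alpha}$, the same order of singularity as $E_*$ itself. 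The remaining $p:=k-\min(k,m)$ derivatives I would move onto $\chi_m$ through the convolution Leibniz identity $\tfrac{\d^p}{\d t^p}\int_0^t h(r)\chi(t-r)\,\d r=\sum_{i=0}^{p-1}h^{(i)}(t)\,\chi^{(p-1-i)}(0)+\int_0^t h(r)\,\chi^{(p)}(t-r)\,\d r$. Every boundary term here vanishes: combining the induction hypothesis (Theorem \ref{thm:reg-homo} at levels $<k$, together with the elementary binomial identities expressing $\tfrac{\d^q}{\d s^q}(s^\ell u)$ through lower-level quantities) one gets $\tfrac{\d^q}{\d s^q}(s^{k+1-m}u(s))\big|_{s=0}=0$ for all $q<k+1-m$, and the $s$-derivatives falling on $A_*-A(s)$ only decrease $q$. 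This reduces the claim to bounding $\big\|A_*^\beta\int_0^{t_*}g_m^{(\min(k,m))}(t_*-s)\,\chi_m^{(p)}(s)\,\d s\big\|_{L^2(\Omega)}$.

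Then I would expand $\chi_m^{(p)}(s)=\sum_{l=0}^p\binom{p}{l}(A_*-A(s))^{(l)}\big(s^{k+1-m}u(s)\big)^{(p-l)}$, where $(A_*-A(s))^{(0)}=A_*-A(s)$ and $(A_*-A(s))^{(l)}=-A^{(l)}(s)$ for $l\ge1$, and treat the leading term $l=0$ and the lower-order terms $l\ge1$ separately. For the leading term I would commute $A_*$ past the kernel, write $A_*-A(s)=A_*(I-A_*^{-1}A(s))$, and pair a kernel bound from Lemma \ref{lem:smoothing} against the perturbation bound $\|(I-A_*^{-1}A(s))v\|_{\dot H^{2\sigma}(\Omega)}\le c\,|t_*-s|\,\|v\|_{\dot H^{2\sigma}(\Omega)}$ of Lemma \ref{lem:conti-A}: with $\sigma=\beta$, the factor $|t_*-s|$ exactly cancels the $(t_*-s)^{-1}$ blow-up of $\|A_*g_m^{(\min(k,m))}(t_*-s)\|$. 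When $m=0$ this leaves precisely the Gronwall term $c\int_0^{t_*}\|A_*^\beta(s^{k+1}u(s))^{(k)}\|_{L^2(\Omega)}\,\d s$; when $m\ge1$ the derivative order $p$ drops strictly below the power $k+1-m$ (or, for $m=k+1$, the kernel $g_{k+1}^{(k)}$ is itself bounded), so a spare power of $s$ or of $t_*-s$ appears, and after integrating and absorbing one spare power of $t_*\le T$ the contribution is $\le c\,t_*^{1-(\beta-\gamma)\alpha}\|u_0\|_{\dot H^{2\gamma}(\Omega)}$. For the lower-order terms $l\ge1$, $A^{(l)}(s)$ is a second-order operator with bounded coefficients (Assumption \eqref{Cond-2}), hence bounded $\dot H^{2\theta}(\Omega)\to\dot H^{2\theta-2}(\Omega)$ for $\theta\in[0,1]$; combining this with $\|A_*^{\beta+1-\theta}g_m^{(\min(k,m))}(t_*-s)\|\le c\,(t_*-s)^{-1+(\theta-\beta)\alpha}$ and the induction-hypothesis bound $\|(s^{k+1-m}u(s))^{(p-l)}\|_{\dot H^{2\theta}(\Omega)}\le c\,s^{1+l-(\theta-\gamma)\alpha}\|u_0\|_{\dot H^{2\gamma}(\Omega)}$, a choice $\theta\in(\beta,1]\cap[\gamma,1]$ makes the $(t_*-s)$-singularity integrable, and a Beta-function computation yields $\le c\,t_*^{1+l-(\beta-\gamma)\alpha}\|u_0\|_{\dot H^{2\gamma}(\Omega)}$.

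The hard part will be the endpoint $\beta=1$, where the choice $\theta\in(\beta,1]$ above is no longer available and the straightforward estimate of the $l\ge1$ terms leaves a non-integrable kernel $(t_*-s)^{-1}$. There the plan is to replace the order-$A_*$ smoothing by the identity $A_*E_*(\tau)=(I-F_*(\tau))'$ from \eqref{eqn:AE=F'} and integrate by parts once more in $s$; the boundary terms again vanish, since $I-F_*(0^+)=0$ and $\chi_m^{(p)}(0)=0$, and $I-F_*(\tau)$ is uniformly bounded on $L^2(\Omega)$ (Lemma \ref{lem:smoothing}(i),(iii)), so one extra $s$-derivative lands on $A^{(l)}(s)\big(s^{k+1-m}u(s)\big)^{(p-l)}$. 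The single top-order term produced in this way reproduces $c\int_0^{t_*}\|A_*(s^{k+1}u(s))^{(k)}\|_{L^2(\Omega)}\,\d s$, while every other term carries enough spare powers of $s$ to be bounded by $c\,t_*^{1-(1-\gamma)\alpha}\|u_0\|_{\dot H^{2\gamma}(\Omega)}$. Summing the leading and lower-order contributions over $m=0,\dots,k+1$ then gives the asserted estimate; the bookkeeping of which derivatives $\tfrac{\d^q}{\d s^q}(s^\ell u)$ are controlled by the induction hypothesis, and the uniform-in-$\alpha\to1^-$ tracking of the constants, are routine but lengthy.
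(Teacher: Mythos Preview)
Your proposal is correct and follows essentially the same route as the paper's proof. Both arguments write the integral as a convolution of the kernel $W_m(r)=r^mE_*(r)$ against the density $(A_*-A(s))s^{k+1-m}u(s)$, push $\min(k,m)$ derivatives onto $W_m$ (using $W_m^{(j)}(0)=0$ for $j<m$) and the remaining ones onto the density, expand by Leibniz, and treat the endpoint $\beta=1$ via the identity $A_*E_*=(I-F_*)'$ and an extra integration by parts. The only cosmetic differences are that the paper changes variables so the kernel sits at argument $s$ (while you keep $s$ on the density), and that for the terms with $l\ge1$ and $\beta<1$ the paper simply places all of $A_*^\beta$ on the kernel and uses $\|A^{(l)}v\|_{L^2}\le c\|v\|_{\dot H^2}$, whereas you introduce a splitting parameter $\theta\in(\beta,1]$; both yield the same Beta-integral bound. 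One point to tighten in your write-up: in the $\beta=1$ integration by parts the relevant antiderivative is $D(r)=(r^m(I-F_*)')^{(m-1)}$ for $m\ge1$, and you should note explicitly (as the paper does) that $D(0)=0$ follows from the surviving factor of $r$ in every Leibniz term, not only from $I-F_*(0)=0$.
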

\begin{proof}
Denote the integral on the left hand side by ${\rm I}_m(t)$, and let $v_m=t^mu(t)$ and $W_m(t)=t^{m}E_*(t)$. Direct
computation using product rule and changing variables gives that for any $0\le m\le k$, there holds
\begin{align*}
  &{\rm I}_m^{(k)}(t)
     =  \frac{\d^{k-m}}{\d t^{k-m}}\int_0^t W_{m}^{(m)}(t-s)(A_*-A(s))v_{k-m+1}(s) \d s\\
     = & \frac{\d^{k-m}}{\d t^{k-m}}\int_0^t W_{m}^{(m)}(s)(A_*-A(t-s))v_{k-m+1}(t-s) \d s\\
     = &  \int_0^tW_{m}^{(m)}(s) \frac{\d^{k-m}}{\d t^{k-m}}\big((A_*-A(t-s))v_{k-m+1}(t-s)\big)\d s\\
     = & \sum_{\ell=0}^{k-m} \bigg(\begin{array}{c} \ell\\ k-m\end{array}\bigg)\underbrace{\int_0^tW_{m}^{(m)}
     (s)(A_*-A(t-s))^{(k-m-\ell)}v_{k-m+1}^{(\ell)}(t-s)\d s}_{{\rm I}_{m,\ell}(t)}.
\end{align*}
Next we bound the integrand
$$
\widetilde {\rm I}_{m,\ell}(s):=W_m^{(m)}(A_*-A(t_*-s))^{(k-m-\ell)}v_{k-m+1}^{(\ell)}(t_*-s)
$$
of the integral ${\rm I}_{m,\ell}(t_*)  .$
We shall distinguish between $\beta\in[\gamma,1)$ and $\beta=1$.
First we analyze the case  $\beta\in [\gamma,1)$. When $\ell<k$, by Lemmas \ref{lem:smoothing}(ii) and \ref{lem:conti-A}
and the induction hypothesis, we bound the integrand $\widetilde{\rm I}_{m,\ell}(s)$ by
\begin{align*}
& \|A_*^\beta \widetilde {\rm I}_{m,\ell}(s)\|_{L^2(\Omega)} \\
&\leq \|A_*^\beta W_{m}^{(m)}(s)\|\|(A_*-A(t_*-s))^{(k-m-\ell)} v_{k-m+1}^{(\ell)}(t_*-s)\|_{L^2(\Omega)}\\
  & \leq \left\{\begin{aligned}
     cs^{(1-\beta)\alpha-1}s\|A_*v_{k-m+1}^{(k-m)}(t_*-s)\|_{L^2(\Omega)}, &\quad \ell=k-m,\\
     cs^{(1-\beta)\alpha-1}\|A_*v_{k-m+1}^{(\ell)}(t_*-s)\Big\|_{L^2(\Omega)}, &\quad \ell<k-m,
    \end{aligned}\right.\\
& \leq \left\{\begin{aligned}
    & cs^{(1-\beta)\alpha}(t_*-s)^{1-(1-\gamma)\alpha}\|A_*^\gamma u_0\|_{L^2(\Omega)}, & \ell=k-m,\\
    & cs^{(1-\beta)\alpha-1}(t_*-s)^{k-m-\ell+1 -(1-\gamma)\alpha}\|A_*^\gamma u_0\|_{L^2(\Omega)}, & \ell<k-m.
    \end{aligned}\right.
\end{align*}
Similarly for the case $\ell=k$ (and thus $m=0$), there holds
\begin{align*}
\|A_*^\beta \widetilde {\rm I}_{0,k}(s)\|_{L^2(\Omega)}
 & \leq \|A_*E_*(s)\|\|A_*^\beta(I-A_*^{-1}A(t_*-s))v_{k+1}^{(k)}\|_{L^2(\Omega)}\\
   & \leq c\|A_*^\beta v_{k+1}^{(k)}(t_*-s)\|_{L^2(\Omega)}.
\end{align*}
Thus, for $0\le m\le k$ and $\ell=k-m$, upon integrating from $0$ to $t_*$, we obtain
\begin{equation*}
\|A_*^\beta{\rm I}_m^{(k)}(t_*)\|_{L^2(\Omega)}
\le ct_*^{2+(\gamma-\beta)\alpha}\|A_*^\gamma u_0\|_{L^2(\Omega)}
+c\int_0^{t_*} \|A_*^\beta v_{k+1}^{(k)}(s)\|_{L^2(\Omega)} \d s,
\end{equation*}
and similarly for $0\le m\le k$ and $\ell<k-m$,
\begin{align*}
\|A_*^\beta{\rm I}_m^{(k)}(t_*)\|_{L^2(\Omega)}
\le& c((1-\beta)\alpha)^{-1}t_*^{1+(\gamma-\beta)\alpha}\|A_*^\gamma u_0\|_{L^2(\Omega)}\\
&+c\int_0^{t_*} \|A_*^\beta v_{k+1}^{(k)}(s)\|_{L^2(\Omega)} \d s.
\end{align*}
Meanwhile, for $m=k+1$, we have
\begin{equation*}
A_*^\beta{\rm I}_{k+1}^{(k)}(t_*) = \int_0^{t_*} A_*^{\beta+1-\gamma}W_{k+1}^{(k)}(t_*-s) A_*^\gamma (I-A_*^{-1}A(s))u(s) \d s,
\end{equation*}
and consequently, by Lemmas \ref{lem:smoothing}(ii) and \ref{lem:conti-A} and the induction hypothesis,
\begin{align*}
&\|A_*^\beta{\rm I}_{k+1}^{(k)}(t_*)\|_{L^2(\Omega)}\\
&\le \int_0^{t_*} \|A_*^{\beta+1-\gamma}W_{k+1}^{(k)}(t_*-s)\| \|A_*^\gamma(I-A_*^{-1}A(s))u(s)\|_{L^2(\Omega)} \d s\\
&\le c\int_0^{t_*} (t_*-s)^{1-(\beta-\gamma)\alpha} \|A_*^\gamma u(s)\|_{L^2(\Omega)} \d s
\le c t_*^{2+(\gamma-\beta)\alpha}\|A_*^\gamma u_0\|_{L^2(\Omega)} .
\end{align*}

In the case $0\le m\le k$ and $\ell<k-m$, the preceding estimates require $\beta\in[0,1)$.
When $0\le m\le k$, $\ell<k-m$ and $\beta=1$, we apply the identity \eqref{eqn:AE=F'} and
rewrite $A_*{\rm I}_{m,\ell}(t_*)$ as
\begin{align*}
   A_*{\rm I}_{m,\ell}(t_*) = \int_0^{t_*} (s^m(I-F_*(s))')^{(m)}(A_*-A(t_*-s))^{(k-m-\ell)} v_{k-m+1}^{(\ell)}(t_*-s)\d s.
\end{align*}
Then integration by parts and product rule yield
\begin{align}
A_*{\rm I}_{m,\ell}(t_*)
& = -\int_0^{t_*} D(s)(A_*-A(t_*-s))^{(k-m-\ell+1)}v_{k-m+1}^{(\ell)}(t_*-s)\d s \nonumber\\
   &\quad -\int_0^{t_*} D(s)(A_*-A(t_*-s))^{(k-m-\ell)}v_{k-m+1}^{(\ell+1)}(t_*-s)\d s \nonumber\\
&\quad -D(0)(A_*-A(t_*-s))^{(k-m-\ell)}|_{s=0}v_{k-m+1}^{(\ell)}(t_*) ,\label{eqn:int-part}
\end{align}
%since $I-F_*(0)=0$, and the function $D(s)$ is given by
with
\begin{equation*}
  D(s) = \left\{\begin{aligned}
    I-F_*(s), &\quad m=0,\\
    (s^{m}(I-F_*(s))')^{(m-1)}, &\quad m>0.
  \end{aligned}\right.
\end{equation*}
By Lemma \ref{lem:smoothing}(iii), $\|D(s)\|\leq c$, and thus the preceding argument with Lemmas \ref{lem:smoothing}
and \ref{lem:conti-A} and the induction hypothesis allows bounding the integrand $A_*\widetilde{I}_{m,\ell}(s)$ of
\eqref{eqn:int-part} by
\begin{align*}
\|A_*\widetilde{\rm I}_{m,\ell}(s) \|_{L^2(\Omega)} & \leq c(t_*-s)^{k-\ell-(1-\gamma)\alpha} \|u_0\|_{\dot H^{2\gamma}(\Omega)} \\
   &\quad + \left\{\begin{aligned}
        c\|A_*v_{k+1}^{(k)}(t_*-s)\|_{L^2(\Omega)}, &\ \ \ell = k-1,\\
        c(t_*-s)^{k-1-\ell-(1-\gamma)\alpha}\|u_0\|_{\dot H^{2\gamma}(\Omega)}, &\ \ \ell<k-1,
\end{aligned}\right.
\end{align*}
where for $\ell=k-1$, we have $m=0$ and hence $D(0)=0$.

Combining the last estimates and then integrating from $0$ to $t_*$ in $s$, we obtain the desired assertion of Lemma \ref{lem:homo}. All the estimates are based on Lemmas \ref{lem:smoothing} and \ref{lem:conti-A}, and thus the constants $c$ in Lemma \ref{lem:homo} is bounded as $\alpha\rightarrow 1^-$.
\qed
\end{proof}

The proof of Theorem \ref{thm:reg-inhomo} is similar to that of Theorem \ref{thm:reg-homo}.
The lengthy and technical proof is deferred to Appendix \ref{app:inhomo}.

\section{Error analysis}\label{sec:error}

In this section, we present error estimates for the scheme \eqref{eqn:fully-correct}. To
this end, let $w(t)=u(t) - u(0)$, which satisfies the equation
\begin{align}\label{eqn:pde-fix}
\left\{
\begin{aligned}
\Dal w(t) + A(0)w(t) &= g(t),\quad \forall t>0,\\
w(0)&=0.
\end{aligned}
\right.
\end{align}
with
\begin{equation*}
   g(t):=(A(0)-A(t))w(t) -  A(t) u_0 + f(t).
\end{equation*}
Then the error $e^n:=u^n-u(t_n)$ of the numerical solution $u^n$ is given by
\begin{align}
e^n=w^n-w(t_n), \quad\mbox{with\,\,\,} w^n=u^n-u_0.
\end{align}
We also introduce an intermediate solution $\overline w^n$ defined by
\begin{equation}\label{eqn:fully-correct-01}
\left\{\begin{aligned}
\bPtau^\alpha  \overline w ^1  +  A(0)\overline w^1   &= g(t_1)+\tfrac{1}{2}g(t_0),\\
% (A(0)- A(t_1))w(t_1) - A(t_1)u_0 - \tfrac12A(0)u_0 + f(t_1) +\tfrac12f(t_0),\\
\bPtau^\alpha  \overline w ^n  +  A(0)\overline w^n &= g(t_n), \quad n=2,3,\ldots, N.
%(A(0) - A(t_n))w(t_n) - A(t_n)u_0+f(t_n),\quad n=2,3,\ldots,N.
\end{aligned}\right.
\end{equation}
which is the numerical approximation of \eqref{eqn:pde-fix} with the source
$g(t)$. Using $\overline w^n$, we further decompose the error $e^n$ into
\begin{equation*}
e^n = (w^n - \overline w^n) + (\overline w^n - w(t_n))=:\varrho^n+\vartheta^n ,
\end{equation*}
where $\vartheta^n$ is the error due to time discretization of problem  \eqref{eqn:pde-fix} with
a ``time-independent'' operator $A(0)$, and $\varrho^n$ is the error between two numerical solutions due to
the perturbation of the source term.

It suffices to estimate the two terms $\varrho^n$ and $\vartheta^n$.
The analysis for $\vartheta^n$ will employ the following nonstandard error estimates.

\begin{lemma}\label{lem:sbd-0}
Let $u(t)$ be the solution of problem \eqref{PDE-independent} with $u_0\equiv0$ and $u^n$, with $u^0=0$, defined by
\begin{align*}
\left\{\begin{aligned}
\bPtau^\alpha u^1 + A(t_*)u^1  &= g(t_1) + \tfrac{1}{2}g(0),\\
\bPtau^\alpha u^n + A(t_*)u^n  &= g(t_n), \qquad\qquad n = 2,\ldots,N.
\end{aligned}\right.
\end{align*}
Then the following statements hold.
\begin{itemize}
\item[(i)] If $\beta,\gamma\in[0,1)$ and $\beta+\gamma<1$, then
\begin{equation*}
  \begin{aligned}
  \|u(t_n) - u^n\|_{\dot H^{2\beta}(\Omega)} & \leq  c\tau^2\Big(t_n^{(1-\beta)\alpha-2}\| g(0)\|_{L^2(\Omega)}
  + t_n^{(1-\beta)\alpha-1}\| g'(0)\|_{L^2(\Omega)} \\
    &\quad + \int_0^{t_n} (t_{n+1}-s)^{(1-\beta-\gamma)\alpha-1} \|g''(s) \|_{\dot H^{-2\gamma}(\Omega)} \,\d s\Big).
  \end{aligned}
\end{equation*}
\item[(ii)] If $\beta=1$, then
\begin{equation*}
\begin{split}
\|u(t_n)-u^n\|_{\dot H^2(\Omega)}
\le &   c \tau^2 \Big(t_n^{-2} \| g(0) \|_{L^2(\Omega)} +  t_n^{-1}\|g'\|_{C([0,\tau];L^2(\Omega))}\\
  &+\int_\tau^{t_n}(t_{n+1}-s)^{-1}\|g''(s)\|_{L^2(\Omega)}\d s \Big) .
\end{split}
\end{equation*}
\end{itemize}
\end{lemma}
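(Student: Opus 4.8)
The plan is to use the standard CQ error representation in terms of the kernel $K(z) = (\delta_\tau(e^{-z\tau})^\alpha + A_*)^{-1} - (z^\alpha + A_*)^{-1}$ together with the correction at the first step, and then to push the temporal derivatives of $g$ onto the kernel via integration by parts in the Laplace inversion contour. Concretely, both $u(t_n)$ and $u^n$ can be written as contour integrals: $u(t_n) = \frac{1}{2\pi\mathrm i}\int_{\Gamma} e^{zt_n} (z^\alpha+A_*)^{-1}\widehat g(z)\,\d z$, while the corrected BDF2--CQ solution admits the generating-function representation $\sum_n u^n \zeta^n = (\delta_\tau(\zeta)^\alpha + A_*)^{-1}\big(\widetilde g(\zeta) + \tfrac12 g(0)\zeta\big)$, where $\widetilde g(\zeta)$ encodes the samples $g(t_n)$. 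Following the now-standard machinery (as in \cite{JinLazarovZhou:SISC2016,JinLiZhou:2017sisc,Lubich:1986,Lubich:1988}), one expresses $u^n$ as a contour integral over $\Gamma^\tau_{\theta,\delta}$ (the part of $\Gamma_{\theta,\delta}$ with $|\Im z|\le \pi/\tau$), and the error $u(t_n)-u^n$ splits into (a) the tail of $\Gamma$ outside $\Gamma^\tau$, which is exponentially small, and (b) a principal term involving the difference of the two kernels and the difference between $\widehat g(z)$ and the discrete transform. The correction term $\tfrac12 g(0)\zeta$ is precisely designed so that the leading $O(\tau)$ error arising from the nonsmoothness of $g$ at $t=0$ (i.e. from $g(0)\ne 0$ and the jump it induces) is cancelled, leaving an $O(\tau^2)$ remainder; this is where the factors $g(0)$ and $g'(0)$ in the bound enter.

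The key steps, in order: (1) Write $u(t_n)-u^n$ using the contour representation and the identity $\delta_\tau(e^{-z\tau}) = z + O(z^3\tau^2)$ valid on $\Gamma^\tau_{\theta,\delta}$; standard estimates give $\|(\delta_\tau(e^{-z\tau})^\alpha+A_*)^{-1} - (z^\alpha+A_*)^{-1}\| \le c\tau^2 |z|^{2}\|(z^\alpha+A_*)^{-1}\|$ and, crucially for the $\dot H^{2\beta}$--$\dot H^{-2\gamma}$ smoothing, $\|A_*^{\beta+\gamma}(z^\alpha+A_*)^{-1}\|\le c|z|^{-(1-\beta-\gamma)\alpha}$ when $\beta+\gamma<1$. (2) Handle the contributions of $g(0)$ and $g'(0)$: after subtracting the correction, Taylor-expand $g$ at $0$ so that $g(t) = g(0) + t g'(0) + \int_0^t (t-s)g''(s)\,\d s$; the first two (polynomial) pieces are treated by explicit computation of the CQ error for the data $t^0$ and $t^1$ (whose Laplace transforms are $z^{-1}$ and $z^{-2}$), yielding the $t_n^{(1-\beta)\alpha-2}\|g(0)\|$ and $t_n^{(1-\beta)\alpha-1}\|g'(0)\|$ terms after deforming the contour and choosing $\delta\sim t_n^{-1}$. (3) For the remainder $\int_0^t(t-s)g''(s)\,\d s$: here $g''$ is only assumed to live in $\dot H^{-2\gamma}(\Omega)$, so one writes this convolution term's CQ error as a convolution of the kernel error with $g''$, bounds the kernel error in the operator norm from $\dot H^{-2\gamma}(\Omega)$ to $\dot H^{2\beta}(\Omega)$ using the resolvent estimate above, and integrates in $s$ to get $\int_0^{t_n}(t_{n+1}-s)^{(1-\beta-\gamma)\alpha-1}\|g''(s)\|_{\dot H^{-2\gamma}}\,\d s$; the shift to $t_{n+1}$ in the argument is the usual artifact of the discrete convolution evaluated at $t_n$. (4) For part (ii), $\beta=1$: the resolvent bound $\|A_*(z^\alpha+A_*)^{-1}\|\le c$ replaces the fractional-power smoothing, so no $\gamma$ is available; one must instead absorb one time derivative differently, splitting $\int_\tau^{t_n}$ from $[0,\tau]$, treating the near-origin part via the $C([0,\tau];L^2)$ norm of $g'$, which explains the appearance of $\|g'\|_{C([0,\tau];L^2(\Omega))}$ rather than $\|g'(0)\|$.

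The main obstacle I expect is step (3) combined with the $\dot H^{-2\gamma}$ bookkeeping in step (2): ensuring that after the correction the $O(\tau)$ terms genuinely cancel and that the surviving $O(\tau^2)$ constant does not blow up as $\beta+\gamma\to 1$ requires careful tracking of the factor $(1-\beta-\gamma)^{-1}$ and of where the singular weight $(t_n-s)^{\cdots-1}$ is integrable; the contour-deformation argument that converts $|z|$-powers in the kernel estimate into the stated powers of $t_n$ and $t_{n+1}-s$ is routine but must be done uniformly in $n$ and uniformly as $\alpha\to 1^-$. A secondary subtlety is that the correction uses $g(0)$ and $\tfrac12 g(0)$ at the first step only, so the generating-function manipulation must correctly account for the extra term $\tfrac12 g(0)\zeta$ and verify that it matches, to second order, the needed correction of the CQ weights — this is the step where the precise form of the scheme \eqref{eqn:fully-correct-01} is used.
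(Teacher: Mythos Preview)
Your proposal is correct and follows precisely the approach the paper indicates: the paper does not give a detailed proof of this lemma but states that it ``can be proved using discrete Laplace transform (generating function technique) similarly as the error estimation for CQ--BDF$k$'' in \cite{JinLiZhou:2017sisc}, which is exactly the contour-integral/generating-function machinery you outline, including the kernel-difference estimate, the Taylor expansion of $g$, the role of the correction $\tfrac12 g(0)\zeta$, and the $\dot H^{-2\gamma}$ smoothing via $\|A_*^{\beta+\gamma}(z^\alpha+A_*)^{-1}\|\le c|z|^{-(1-\beta-\gamma)\alpha}$. Your sketch in fact supplies more detail than the paper itself.
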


Lemma \ref{lem:sbd-0} can be proved using discrete Laplace transform (generating function technique)
similarly as the error estimation for CQ--BDF$k$ \cite{JinLiZhou:2017sisc}.
This type of error estimation yields an error bound directly from a contour integral, while the constant
produced from a contour integral is bounded as $\alpha\rightarrow 1^-$.
We will use Lemma \ref{lem:sbd-0} and a perturbation argument to bound
$\vartheta^n$ and $\varrho^n$, respectively, and derive error estimates for numerical solutions.

For the convenience of error analysis, we further split $w(t)$ into $w(t)=w_0(t)+w_1(t)$, where $w_0(t)$ and $w_1(t)$
are respectively solutions of
\begin{align}
 \Dal w_0(t) + A(0) w_0(t) &= (A(0) -A(t))w(t), && \mbox{with }w_0(0)=0,\label{eqn:w0}\\
 \Dal w_1(t) + A(0) w_1(t) &= -  A(t) u_0+f(t), && \mbox{with }w_1(0)=0.\label{eqn:w1}
\end{align}
Correspondingly, we split $\overline{w}^n$ into $\overline{w}^n=\overline w_0^n+\overline w_1^n$, defined by $\overline{w}_0^0=0$,
\begin{equation}\label{def-w0n}
%\qquad\left\{\begin{aligned}
\bPtau^\alpha \overline w_0^n  + A(0)\overline w_0^n = (A(0)- A(t_n))w(t_n)  ,\quad n=1,2,3,\ldots,N, \\
%&\overline w_0^0=0,
%\end{aligned}\right.
\end{equation}
and $\overline{w}_1^0=0$ and
\begin{equation}\label{def-w1n}
\left\{\begin{aligned}
&\bPtau^\alpha \overline w_1^1  +  A(0)\overline w_1^1  =  -A(t_1) u_0 - \tfrac{1}{2}A(0)u_0 + f(t_1)+\tfrac{1}{2}f(t_0),\\
&\bPtau^\alpha  \overline w_1^n  +  A(0)\overline w_1^n = -A(t_n) u_0 + f(t_n),\quad n=2,3,\ldots,N,\\
%&\overline w_1^0=0 .
\end{aligned}\right.
\end{equation}
The functions $\overline{w}_0^n$ and $\overline{w}_1^n$  approximate $w_0(t_n)$ and $w_1(t_n)$, respectively.

\subsection{Error analysis for the homogeneous problem}

Now we analyze the scheme \eqref{eqn:fully-correct} for the homogeneous problem with $f\equiv 0$.
First, we bound the function $g(t)=(A(0)-A(t))w(t)$ in equation \eqref{eqn:w0}.
\begin{lemma}\label{lem:g}
Let Assumptions \eqref{Cond-1}--\eqref{Cond-2} hold. For the function $g(t)=(A(0)-A(t))w(t)$, the following statements hold when $f\equiv0$.
\begin{itemize}
  \item[(i)] $u_0\in \dot H^{2}(\Omega)$ and $\beta\in[0,1]$, then $\|g'(0)\|_{L^2(\Omega)}+t^{1-\alpha\beta}\|g''(t)\|_{\dot H^{-2\beta}(\Omega)}\leq c\|u_0\|_{\dot H^2(\Omega)}$.
  \item[(ii)] $u_0\in L^2(\Omega)$, then $\|g'(t)\|_{\dot H^{-2}(\Omega)} + t\|g''(t)\|_{\dot H^{-2}(\Omega)} \le c \| u_0 \|_{L^2(\Omega)}$.
\end{itemize}
\end{lemma}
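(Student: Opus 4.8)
The plan is to express $g(t) = (A(0)-A(t))w(t)$ and differentiate, then bound each resulting term using the regularity estimates for $w(t)$ from Corollary \ref{cor:reg-homo} and Theorem \ref{thm:reg-homo} together with the coefficient-continuity estimate of Lemma \ref{lem:conti-A}. Writing $B(t) := A(0) - A(t) = A(0)(I - A(0)^{-1}A(t))$, conditions \eqref{Cond-1}--\eqref{Cond-2} give that the operators $A(0)^{-1}B^{(j)}(t) = -\frac{\d^j}{\d t^j}(A(0)^{-1}A(t))$ are bounded on each $\dot H^{2\beta}(\Omega)$ for $j=0,\dots,K+1$, with $A(0)^{-1}B(0)=0$; moreover Lemma \ref{lem:conti-A} yields $\|A(0)^{-1}B(t)v\|_{\dot H^{2\beta}(\Omega)}\le ct\|v\|_{\dot H^{2\beta}(\Omega)}$. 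By Leibniz,
\[
g'(t) = B'(t)w(t) + B(t)w'(t),\qquad
g''(t) = B''(t)w(t) + 2B'(t)w'(t) + B(t)w''(t).
\]

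For part (i), with $u_0\in\dot H^2(\Omega)$: at $t=0$ the term $B(0)w'(0)$ vanishes since $B(0)=0$, so $g'(0)=B'(0)w(0)=B'(0)(u(0)-u_0)=0$, hence trivially $\|g'(0)\|_{L^2(\Omega)}\le c\|u_0\|_{\dot H^2(\Omega)}$. (Alternatively one bounds $\|g'(0)\|_{L^2}$ by a limiting argument using $\|B(t)w'(t)\|_{L^2}\le ct\|w'(t)\|_{L^2}\le ct\cdot t^{\alpha-1}\|u_0\|_{\dot H^2}\to 0$.) For the second bound I would estimate $\|g''(t)\|_{\dot H^{-2\beta}(\Omega)}$ by moving to negative-order spaces: write $A(0)^{-\beta}g''(t) = A(0)^{1-\beta}\cdot A(0)^{-1}B''(t)\cdot w(t) + \cdots$, but more directly, since each $A(0)^{-1}B^{(j)}(t)$ is bounded on $L^2(\Omega)$, we get $\|g''(t)\|_{\dot H^{-2\beta}(\Omega)} = \|A(0)^{-\beta}g''(t)\|_{L^2(\Omega)} \le c\big(\|A(0)^{1-\beta}w(t)\|_{L^2} + \|A(0)^{1-\beta}w'(t)\|_{L^2} + \|A(0)^{1-\beta}w''(t)\|_{L^2}\big)$; here for the last term one uses the extra smoothing of $A(0)^{-\beta}B(t)$, which by Lemma \ref{lem:conti-A} gains a factor $t$, so $\|A(0)^{-\beta}B(t)w''(t)\|_{L^2}\le ct\|A(0)^{1-\beta}w''(t)\|_{L^2}$. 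Corollary \ref{cor:reg-homo} (applied with the appropriate $\beta$, i.e.\ to $\|(t^k w(t))^{(k)}\|_{\dot H^{2(1-\beta)}}$ for $k=0,1,2$, after expanding the derivatives $(t^kw)^{(k)}$ and solving for $w^{(k)}$) gives $\|A(0)^{1-\beta}w^{(k)}(t)\|_{L^2}\le ct^{\alpha\beta-k}\|u_0\|_{\dot H^2}$, and the surviving worst term is $t\cdot t^{\alpha\beta-2} = t^{\alpha\beta-1}$, which is exactly the claimed weight $t^{-(1-\alpha\beta)}$.

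For part (ii), with only $u_0\in L^2(\Omega)$, I work entirely in $\dot H^{-2}(\Omega)$ using the gain from $A(0)^{-1}B(t)$. We have $\|g'(t)\|_{\dot H^{-2}(\Omega)} = \|A(0)^{-1}g'(t)\|_{L^2} \le \|A(0)^{-1}B'(t)w(t)\|_{L^2} + \|A(0)^{-1}B(t)w'(t)\|_{L^2} \le c\|w(t)\|_{L^2} + ct\|w'(t)\|_{L^2}$, and Theorem \ref{thm:reg-homo} with $\gamma=0$, $\beta=0$ gives $\|w(t)\|_{L^2}\le\|u(t)\|_{L^2}+\|u_0\|_{L^2}\le c\|u_0\|_{L^2}$ and $\|w'(t)\|_{L^2}=\|u'(t)\|_{L^2}\le ct^{-1}\|u_0\|_{L^2}$ (the $k=1$, $\gamma=\beta=0$ case, after expanding $(tu)'=u+tu'$), so $t\|w'(t)\|_{L^2}\le c\|u_0\|_{L^2}$; combining gives the first bound. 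For $\|g''(t)\|_{\dot H^{-2}(\Omega)}$ the same splitting yields $\le c\|w(t)\|_{L^2} + c\|w'(t)\|_{L^2} + ct\|w''(t)\|_{L^2}$; using $\|w'(t)\|_{L^2}\le ct^{-1}\|u_0\|_{L^2}$ and $\|w''(t)\|_{L^2}\le ct^{-2}\|u_0\|_{L^2}$ (from Theorem \ref{thm:reg-homo} with $k=2$, expanding $(t^2u)''$ and solving for $u''$) the dominant term is again $t\cdot t^{-2}=t^{-1}$, giving $\|g''(t)\|_{\dot H^{-2}(\Omega)}\le ct^{-1}\|u_0\|_{L^2}$, i.e.\ $t\|g''(t)\|_{\dot H^{-2}(\Omega)}\le c\|u_0\|_{L^2}$.

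The main obstacle is bookkeeping rather than conceptual: one must carefully convert the estimates on $(t^ku(t))^{(k)}$ from Theorem \ref{thm:reg-homo}/Corollary \ref{cor:reg-homo} into estimates on the bare derivatives $w^{(k)}(t)$, which requires expanding $(t^ku)^{(k)} = \sum_{j}\binom{k}{j}\frac{k!}{(k-j)!}\,t^{k-j}\,u^{(k-j)}$ and inductively isolating $u^{(k)}$; and one must track that the gain of one power of $t$ from $A(0)^{-1}B(t)$ (Lemma \ref{lem:conti-A}) is applied to the \emph{highest} derivative of $w$ in each product, since that is the only term whose singularity is not already integrable against the stated weight. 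All constants stay bounded as $\alpha\to1^-$ because every ingredient (Lemmas \ref{lem:smoothing}, \ref{lem:conti-A}, Theorem \ref{thm:reg-homo}) has this property.
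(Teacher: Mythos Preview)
Your proposal is correct and follows essentially the same route as the paper: expand $g'$ and $g''$ by the Leibniz rule, use Lemma~\ref{lem:conti-A} to extract a factor $t$ from $B(t)=A(0)-A(t)$ on the highest-derivative term, and control $\|w^{(k)}(t)\|_{\dot H^{2-2\beta}}$ via Corollary~\ref{cor:reg-homo} (part (i)) and Theorem~\ref{thm:reg-homo} (part (ii)) after unpacking the $(t^k w)^{(k)}$ estimates. Two small corrections: in your parenthetical limiting argument for $g'(0)$, the inequality should read $\|B(t)w'(t)\|_{L^2}\le ct\,\|w'(t)\|_{\dot H^{2}}$ (not $\|w'(t)\|_{L^2}$), since $B(t)$ is second order; with this fix one only gets boundedness of $t\|w'(t)\|_{\dot H^2}$, not decay to zero. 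This is also why the paper does not argue that $g'(0)=0$ directly (indeed $w'(0)$ need not exist in $L^2$), but instead bounds $\|g'(t)\|_{L^2}\le ct\|u'(t)\|_{\dot H^2}+c\|w(t)\|_{\dot H^2}\le c\|u_0\|_{\dot H^2}$ uniformly for $t\in(0,T]$ and reads off the bound at $t=0$.
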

\begin{proof}
By Theorem \ref{thm:reg-homo} and triangle inequality,
$
\|w(t)\|_{\dot H^2(\Omega)}\leq \|u(t)\|_{\dot H^2(\Omega)}+
\|u_0\|_{\dot H^2(\Omega)}\leq c\|u_0\|_{\dot H^2(\Omega)} .
$
Thus, by Lemma \ref{lem:conti-A},
\begin{align*}
  \|g'(t)\|_{L^2(\Omega)} & \leq \|(A(0)-A(t))w'(t)\|_{L^2(\Omega)} + \|A'(t)w(t)\|_{L^2(\Omega)}\\
     & \leq ct\|u'(t)\|_{\dot H^2(\Omega)} + c\|w(t)\|_{\dot H^2(\Omega)}\leq c\|u_0\|_{\dot H^2(\Omega)},
\end{align*}
Thus, $\|g'(0)\|_{L^2(\Omega)} \leq c\|u_0\|_{\dot H^2(\Omega)}$. Since
$g''(t) = (A(0)-A(t))w''(t) - 2A'(t)w'(t) - A''(t)w(t),$
it follows from Corollary \ref{cor:reg-homo} and Theorem \ref{thm:reg-homo} that for $\beta\in[0,1]$
\begin{align*}
\|g''(t)\|_{\dot H^{-2\beta}(\Omega)}&=
\|(A(0)-A(t))w''(t) - 2A'(t)w'(t) - A''(t)w(t)\|_{\dot H^{-2\beta}(\Omega)} \\
&\leq ct \|w''(t)\|_{\dot H^{2-2\beta}(\Omega)} + c\|w'(t)\|_{\dot H^{2-2\beta}(\Omega)}+c\|w(t)\|_{\dot H^{2-2\beta}(\Omega)}\\
& \leq ct^{\alpha\beta-1} \| u_0 \|_{\dot H^2(\Omega)}.
\end{align*}
Similarly, when $u_0\in L^2(\Omega)$, repeating the preceding argument shows (ii).
%All the estimates are based on Lemma \ref{lem:conti-A}, Corollary \ref{cor:reg-homo} and
%Theorem \ref{thm:reg-homo}, and thus the constants $c$ in Lemma \ref{lem:g} is bounded as
%$\alpha\rightarrow 1^-$.\qed
\end{proof}

The next lemma bounds $\vartheta^n=\overline w^n-w(t_n)$.
\begin{lemma}\label{lem:theta}
Let conditions \eqref{Cond-1}-\eqref{Cond-2} hold, and $w$ be the solution to problem \eqref{eqn:pde-fix}
with $f\equiv0$. Let $\vartheta^n:=\overline w^n-w(t_n)$. Then there hold
\begin{align*}
&\| \vartheta^n\|_{\dot H^{2\beta}(\Omega)}
\le c \tau^2 t_n^{\alpha(1-\beta)-2} \| u_0 \|_{\dot H^2(\Omega)}, &&\forall \beta\in[0,1/2),\\
&\|\vartheta^n\|_{L^2(\Omega)}
\le c \tau^2 t_n^{-2}\ell_n\|u_0\|_{L^2(\Omega)}, &&\mbox{with }\ell_n = \log(1+t_n/\tau).
\end{align*}
%where the constants $c$ are bounded as $\alpha\rightarrow 1^-$.
\end{lemma}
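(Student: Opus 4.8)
The plan is to identify $\vartheta^n$ as a time-discretization error for the frozen-coefficient problem \eqref{eqn:pde-fix} and, using the splitting $w=w_0+w_1$ together with $\overline w^n=\overline w_0^n+\overline w_1^n$ already set up, to reduce it to two applications of the convolution-quadrature error bound of Lemma~\ref{lem:sbd-0} with $A(t_*)=A(0)$. Write $\vartheta^n=\vartheta_0^n+\vartheta_1^n$ with $\vartheta_0^n:=\overline w_0^n-w_0(t_n)$ and $\vartheta_1^n:=\overline w_1^n-w_1(t_n)$; then $\vartheta_0^n$ is the corrected BDF2--CQ error for \eqref{eqn:w0} with source $g(t)=(A(0)-A(t))w(t)$, and $\vartheta_1^n$ the error for \eqref{eqn:w1} with source $-A(t)u_0$ (recall $f\equiv0$). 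Two observations make Lemma~\ref{lem:sbd-0} directly applicable: $g(0)=(A(0)-A(0))w(0)=0$, so, since the first-step correction carries the term $\tfrac12 g(0)$, scheme \eqref{def-w0n} coincides with the corrected scheme of Lemma~\ref{lem:sbd-0}; and if only $u_0\in L^2(\Omega)$, then also $g'(0)=0$ (both summands of $g'$ vanish at $t=0$ since $w(0)=0$). Throughout I use $t_n\le T$ to absorb positive powers of $t_n$ into the generic constant.

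For the $\dot H^{2\beta}(\Omega)$ estimate with $u_0\in\dot H^2(\Omega)$ and $\beta\in[0,1/2)$: for $\vartheta_1^n$ the source $-A(t)u_0$ and its time derivatives (up to order $K$) are bounded in $L^2(\Omega)$ by $c\|u_0\|_{\dot H^2(\Omega)}$ under \eqref{Cond-1}--\eqref{Cond-2}, so Lemma~\ref{lem:sbd-0}(i) with $\gamma=0$ gives the claimed bound after $\int_0^{t_n}(t_{n+1}-s)^{(1-\beta)\alpha-1}\,\d s\le ct_n^{(1-\beta)\alpha}$ and $t_n\le T$. For $\vartheta_0^n$ I feed the bounds of Lemma~\ref{lem:g}(i), $\|g'(0)\|_{L^2(\Omega)}\le c\|u_0\|_{\dot H^2(\Omega)}$ and $\|g''(s)\|_{\dot H^{-2\gamma}(\Omega)}\le cs^{\alpha\gamma-1}\|u_0\|_{\dot H^2(\Omega)}$, into Lemma~\ref{lem:sbd-0}(i) with a $\gamma\in(0,1-\beta)$ (e.g.\ $\gamma=\beta$ when $\beta>0$; this is where $\beta<1/2$ enters, to ensure $\beta+\gamma<1$). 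The convolution term $\int_0^{t_n}(t_{n+1}-s)^{(1-\beta-\gamma)\alpha-1}s^{\alpha\gamma-1}\,\d s$ is bounded by $ct_n^{(1-\beta)\alpha-1}$ by splitting $[0,t_n]=[0,t_n/2]\cup[t_n/2,t_n]$ (on the first piece $t_{n+1}-s\gtrsim t_n$, on the second $s\gtrsim t_n$, with $(1-\beta-\gamma)\alpha>0$ and $\alpha\gamma>0$ giving integrability), and together with the $t_n^{(1-\beta)\alpha-1}\|g'(0)\|_{L^2(\Omega)}$ term and one more factor $t_n\le T$ this yields $\|\vartheta_0^n\|_{\dot H^{2\beta}(\Omega)}\le c\tau^2 t_n^{(1-\beta)\alpha-2}\|u_0\|_{\dot H^2(\Omega)}$.

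For the $L^2(\Omega)$ estimate with $u_0\in L^2(\Omega)$ one is at the borderline case: now both sources ($-A(t)u_0$ and $g=(A(0)-A(t))w(t)$) and their first two time derivatives are controlled only in $\dot H^{-2}(\Omega)$ — by the bounds on $A^{(j)}(t)$ implied by \eqref{Cond-2} and by Lemma~\ref{lem:g}(ii) — which is the index $\gamma=1$ excluded from Lemma~\ref{lem:sbd-0}(i). The fix is the $\gamma=1$ endpoint estimate (the natural analogue of Lemma~\ref{lem:sbd-0}(ii) with error in $L^2(\Omega)$ and data in $\dot H^{-2}(\Omega)$), proved by the same generating-function computation,
\begin{equation*}
\|v(t_n)-v^n\|_{L^2(\Omega)}\le c\tau^2\Big(t_n^{-2}\|h(0)\|_{\dot H^{-2}(\Omega)}+t_n^{-1}\|h'\|_{C([0,\tau];\dot H^{-2}(\Omega))}+\int_\tau^{t_n}(t_{n+1}-s)^{-1}\|h''(s)\|_{\dot H^{-2}(\Omega)}\,\d s\Big),
\end{equation*}
for the corrected scheme for $\partial_t^\alpha v+A(0)v=h$, $v(0)=0$ (the kernel $(t_{n+1}-s)^{-1}$ is the $\gamma=1$ specialization of $(t_{n+1}-s)^{(1-\gamma)\alpha-1}$). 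With $h=-A(t)u_0$ this gives $\|\vartheta_1^n\|_{L^2(\Omega)}\le c\tau^2\big(t_n^{-2}+t_n^{-1}+\int_\tau^{t_n}(t_{n+1}-s)^{-1}\,\d s\big)\|u_0\|_{L^2(\Omega)}$ with $\int_\tau^{t_n}(t_{n+1}-s)^{-1}\,\d s=\log\!\big((t_{n+1}-\tau)/\tau\big)\le\ell_n$; with $h=g$ (using $g(0)=0$, $\|g'(s)\|_{\dot H^{-2}(\Omega)}\le c\|u_0\|_{L^2(\Omega)}$, $\|g''(s)\|_{\dot H^{-2}(\Omega)}\le cs^{-1}\|u_0\|_{L^2(\Omega)}$) it gives $\|\vartheta_0^n\|_{L^2(\Omega)}\le c\tau^2\big(t_n^{-1}+\int_\tau^{t_n}(t_{n+1}-s)^{-1}s^{-1}\,\d s\big)\|u_0\|_{L^2(\Omega)}$, and $\int_\tau^{t_n}(t_{n+1}-s)^{-1}s^{-1}\,\d s\le ct_n^{-1}\ell_n$ again by the split at $t_n/2$. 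Summing and using $t_n\le T$, $\ell_n\ge\log2$ gives $\|\vartheta^n\|_{L^2(\Omega)}\le c\tau^2 t_n^{-2}\ell_n\|u_0\|_{L^2(\Omega)}$.

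The main obstacle is this $L^2(\Omega)$ borderline estimate: Lemma~\ref{lem:sbd-0}(i) degenerates as $\gamma\to1^-$, so one must first establish the $\gamma=1$ endpoint bound displayed above, keeping track of the logarithmic loss that the contour integral produces at this endpoint and checking that its constant stays bounded as $\alpha\to1^-$; one then has to extract exactly the factor $\ell_n=\log(1+t_n/\tau)$, and nothing worse, from the singular convolution $\int_\tau^{t_n}(t_{n+1}-s)^{-1}s^{-1}\,\d s$, which is precisely what confines the $L^2(\Omega)$ result to a logarithmically corrected $O(\tau^2)$ rate. By comparison the $\dot H^{2\beta}(\Omega)$ estimate for $u_0\in\dot H^2(\Omega)$ is a routine combination of Lemmas~\ref{lem:sbd-0}(i) and~\ref{lem:g}(i) with the elementary convolution estimate above.
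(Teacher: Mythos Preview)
Your proposal is correct and follows essentially the same route as the paper: the same splitting $\vartheta^n=\vartheta_0^n+\vartheta_1^n$, the same application of Lemma~\ref{lem:sbd-0}(i) with $\gamma=\beta$ together with Lemma~\ref{lem:g}(i) for the $\dot H^{2\beta}$ case, and the same use of Lemma~\ref{lem:g}(ii) and the borderline bound for the $L^2$ case. The ``$\gamma=1$ endpoint estimate'' you spell out is not an additional ingredient but precisely Lemma~\ref{lem:sbd-0}(ii) conjugated by $A(0)^{-1}$ (which commutes with the frozen-coefficient equation and its discretization), so the paper simply cites Lemma~\ref{lem:sbd-0}(ii) at that point; otherwise the arguments coincide.
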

\begin{proof}
Using the decompositions $w(t)=w_0(t)+w_1(t)$ and $\overline{w}^n=\overline w_0^n+\overline w_1^n$ defined
in \eqref{eqn:w0}-\eqref{eqn:w1} and \eqref{def-w0n}-\eqref{def-w1n}, respectively, we have
\begin{equation}\label{eqn:split-vartheta}
\|\vartheta^n\|_{\dot H^{2\beta}(\Omega)} \leq \| \overline{w}_0^n-w_0(t_n) \|_{\dot H^{2\beta}(\Omega)} + \|\overline{w}_1^n-w_1(t_n) \|_{\dot H^{2\beta}(\Omega)}.
\end{equation}
We discuss the cases $u_0\in \dot H^{2}(\Omega)$ and $u_0\in L^2(\Omega)$, separately.\medskip

\noindent{\it Case (i): $u_0\in \dot H^{2}(\Omega)$.} Lemma \ref{lem:sbd-0}(i) with $g(t)=A(t)u_0$, for $\beta\in[0,1/2)$, implies
\begin{align}\label{eqn:est-w1}
\| \overline{w}_1^n - w_1(t_n) \|_{\dot H^{2\beta}(\Omega)}
&\leq c \tau^2 t_n^{(1-\beta)\alpha-2} \| u_0 \|_{\dot H^{2}(\Omega)}.
\end{align}
For $g(t) = (A(0) - A(t))w(t)$ and any $\beta\in[0,1/2)$, Lemmas \ref{lem:sbd-0}(i) and \ref{lem:g} imply
\begin{align*}
& \|\overline{w}_0^n-w_0(t_n) \|_{\dot H^{2\beta}(\Omega)}\\
&\le
c\tau^2t_n^{(1-\beta)\alpha-1}\| g'(0)\|_{L^2(\Omega)} + c\tau^2\int_0^{t_n} (t_{n+1}-s)^{(1-2\beta)\alpha-1} \| g''(s) \|_{\dot H^{-2\beta}(\Omega)} \,\d s\\
&\le
c\tau^2\Big(t_n^{(1-\beta)\alpha-1} + \int_0^{t_n}(t_{n+1}-s)^{(1-2\beta)\alpha-1}s^{\alpha\beta-1}\d s\Big)\| u_0\|_{\dot H^{2}(\Omega)} \\
&\le
c \tau^2 t_n^{\alpha(1-\beta)-1} \| u_0 \|_{\dot H^{2}(\Omega)}.
\end{align*}
This and \eqref{eqn:est-w1} yield the desired estimate for $u_0\in \dot H^{2}(\Omega)$.\medskip

\noindent{\it Case (ii): $u_0\in L^2(\Omega)$.} By Lemma \ref{lem:sbd-0}(ii), we have
\begin{equation*}
   \|\overline{w}_1^n- w_1(t_n)\|_{L^2(\Omega)} \le c \tau^2 t_n^{ -2}\ell_n \| u_0 \|_{L^2(\Omega)}.
\end{equation*}
Meanwhile, by Lemmas \ref{lem:sbd-0}(ii) and \ref{lem:g}, we have
\begin{align*}
&\|\overline{w}_0^n - w_0(t_n)\|_{L^2(\Omega)}\\
&\le c\tau^2\Big(t_n^{-1}\|g'\|_{C([0,\tau];\dot H^{-2}(\Omega))} + \int_\tau^{t_n}(t_{n+1}-s)^{-1}\| g''(s)\|_{\dot H^{-2}(\Omega)}\,ds\Big)\\
&\le c\tau^2\Big( t_n^{-1} + \int_\tau^{t_n} (t_{n+1}-s)^{-1}s^{-1} \d s\Big) \|u_0\|_{L^2(\Omega)} \le c\tau^2 t_n^{-1}\ell_n\|  u_0 \|_{L^2(\Omega)}.
\end{align*}
These two estimates give the second assertion, completing the proof.\qed
%The preceding estimates are based on Lemmas \ref{lem:sbd-0} and \ref{lem:g}, and hence,
%the constants $c$ in Lemma \ref{lem:theta} is bounded as $\alpha\rightarrow 1^-$.\qed
\end{proof}

We need a temporally semidiscrete solution operator $ E_{\tau,m}^{n}$ defined by
\begin{align}\label{eqn:semi-operator}
E_{\tau,m}^n = \frac{1}{2\pi\mathrm{i}}\int_{\Gamma_{\theta,\delta}^\tau } e^{zn\tau} ({ \delta_\tau(e^{-z\tau})^\alpha}+A(t_m))^{-1}\,\d z ,
\end{align}
with the contour $\Gamma_{\theta,\delta}^\tau$ given by
\begin{equation}\label{eqn:contour-tau}
   \Gamma_{\theta,\delta}^\tau :=\{ z\in \Gamma_{\theta,\delta}:|\Im(z)|\le {\pi}/{\tau} \},
\end{equation}
oriented with an increasing imaginary part. The following smoothing property of the
operator $E_{\tau,m}^n$ holds \cite[Lemma 4.3]{JinLiZhou:2017variable}: for any $\beta\in[0,1]$
\begin{equation}\label{eqn:E-smoothing}
  \|A(t_m)^\beta E_{\tau,m}^n\|\leq c(t_n+\tau)^{(1-\beta)\alpha-1},\quad n=0,1,\dots,N.
\end{equation}

We have the following $L^2(\Omega)$ stability for $\varrho^n$.
\begin{lemma}\label{lem:stability-homo}
Let conditions \eqref{Cond-1}-\eqref{Cond-2} be fulfilled, and $u$ the solution to problem \eqref{eqn:pde}
with $f\equiv0$. Let $\varrho^n= w^n-\overline w^n$. Then with $\ell_n=\log(1+t_n/\tau)$, there holds
\begin{equation*}
   \| \varrho^m  \|_{L^2(\Omega)} \le c \tau\sum_{k=1}^m\|\varrho^k\|_{L^2(\Omega)}+\left\{\begin{array}{ll}
     c\tau^2 t_m^{\alpha-1}\|u_0 \|_{\dot H^2(\Omega)}, & \mbox{if } u_0\in \dot H^{2}(\Omega),\\
     c\tau^2 t_m^{-1}\ell_m^2\|u_0\|_{L^2(\Omega)}, & \mbox{if } u_0\in L^2(\Omega).
     \end{array}\right.
\end{equation*}
%where the constants $c$ are bounded as $\alpha\rightarrow 1^-$.
\end{lemma}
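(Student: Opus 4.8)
The plan is to pin down the discrete equation satisfied by $\varrho^n=w^n-\overline w^n$, then to freeze the elliptic operator a \emph{second} time at $t=t_m$, and finally to estimate the two resulting discrete convolutions using the smoothing bound \eqref{eqn:E-smoothing}, the perturbation estimate of Lemma \ref{lem:conti-A}, and the $\vartheta^n$-bounds of Lemma \ref{lem:theta}. First I would subtract the equations for $w^n=u^n-u_0$ (obtained from \eqref{eqn:fully-correct} with $f\equiv0$) from those for $\overline w^n$ in \eqref{eqn:fully-correct-01}: with the discrete source $g^n:=(A(0)-A(t_n))w^n-A(t_n)u_0$, one checks that $w^n$ solves $\bPtau^\alpha w^n+A(0)w^n=g^n$ for $n\ge2$ and $\bPtau^\alpha w^1+A(0)w^1=g^1+\tfrac12 g^0$ with $g^0=-A(0)u_0$, while $\overline w^n$ solves the same equations with $g(t_n)$ (the exact source of \eqref{eqn:pde-fix}) in place of $g^n$. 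Since $g^0=g(t_0)$, the first-step corrections $\tfrac12 A(0)u_0$ cancel on subtraction, so $\varrho^n$ satisfies the \emph{uncorrected} BDF2--CQ scheme $\bPtau^\alpha\varrho^n+A(0)\varrho^n=(A(0)-A(t_n))e^n$ for $n\ge1$, with $\varrho^0=0$ and $e^n=\varrho^n+\vartheta^n$.

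The key step is to freeze the operator at $t=t_m$: rewriting the above as $\bPtau^\alpha\varrho^n+A(t_m)\varrho^n=(A(t_m)-A(t_n))\varrho^n+(A(0)-A(t_n))\vartheta^n$ and invoking the discrete variation-of-constants formula with the semidiscrete operator $E^n_{\tau,m}$ of \eqref{eqn:semi-operator} (as in \cite{JinLiZhou:2017variable}) gives a representation
\[
  \varrho^m=\tau\sum_{j=1}^m E^{m-j}_{\tau,m}\,[(A(t_m)-A(t_j))\varrho^j+(A(0)-A(t_j))\vartheta^j],
\]
and I write $\varrho^m=\mathrm{I}+\mathrm{II}$ according to these two terms. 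The purpose of this second freezing is that in $\mathrm{I}$ the perturbation $A(t_m)-A(t_j)$ carries the factor $|t_m-t_j|=t_{m-j}$, which compensates the endpoint singularity of $E^{m-j}_{\tau,m}$ as $j\to m$. Concretely, Lemma \ref{lem:conti-A} with $\beta=0$ gives $\|A(t_m)^{-1}(A(t_m)-A(t_j))\varrho^j\|_{L^2(\Omega)}\le c\,t_{m-j}\|\varrho^j\|_{L^2(\Omega)}$, and \eqref{eqn:E-smoothing} with $\beta=1$ gives $\|A(t_m)E^{m-j}_{\tau,m}\|\le c(t_{m-j}+\tau)^{-1}$; hence each summand of $\mathrm{I}$ is $\le c\,t_{m-j}(t_{m-j}+\tau)^{-1}\|\varrho^j\|_{L^2(\Omega)}\le c\|\varrho^j\|_{L^2(\Omega)}$, so $\|\mathrm{I}\|_{L^2(\Omega)}\le c\tau\sum_{j=1}^m\|\varrho^j\|_{L^2(\Omega)}$, the first term in the claimed bound.

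For $\mathrm{II}$ no cancellation in $A(0)-A(t_j)$ is needed, since $\vartheta^j$ is already $O(\tau^2)$; I only use that $A(0)-A(t_j)$ maps $\dot H^{2\beta}(\Omega)$ into $\dot H^{2\beta-2}(\Omega)$ with norm $\le c\,t_j$ (Lemma \ref{lem:conti-A}, using $\|A(0)w\|_{\dot H^{2\beta-2}(\Omega)}=\|w\|_{\dot H^{2\beta}(\Omega)}$) together with the smoothing of $E^{m-j}_{\tau,m}$. For $u_0\in L^2(\Omega)$ I take $\beta=0$, insert $\|\vartheta^j\|_{L^2(\Omega)}\le c\tau^2 t_j^{-2}\ell_j\|u_0\|_{L^2(\Omega)}$ (Lemma \ref{lem:theta}) and $\|A(t_m)E^{m-j}_{\tau,m}\|\le c(t_{m-j}+\tau)^{-1}$, which reduces the estimate to $\sum_{j=1}^m(m-j+1)^{-1}j^{-1}\log(1+j)\le c\,m^{-1}\ell_m^2$ and gives $\|\mathrm{II}\|_{L^2(\Omega)}\le c\tau^2 t_m^{-1}\ell_m^2\|u_0\|_{L^2(\Omega)}$. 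For $u_0\in\dot H^2(\Omega)$ I instead fix a small $\beta\in(0,\tfrac12)$, use $\|\vartheta^j\|_{\dot H^{2\beta}(\Omega)}\le c\tau^2 t_j^{\alpha(1-\beta)-2}\|u_0\|_{\dot H^2(\Omega)}$ (Lemma \ref{lem:theta}) and \eqref{eqn:E-smoothing} with exponent $1-\beta$, i.e. $\|A(t_m)^{1-\beta}E^{m-j}_{\tau,m}\|\le c(t_{m-j}+\tau)^{\beta\alpha-1}$; here $\beta>0$ makes the endpoint exponent exceed $-1$, removing a spurious logarithm, and the Beta-type sum $\sum_{j=1}^m(m-j+1)^{\beta\alpha-1}j^{\alpha(1-\beta)-1}\le c\,m^{\alpha-1}$ yields $\|\mathrm{II}\|_{L^2(\Omega)}\le c\tau^2 t_m^{\alpha-1}\|u_0\|_{\dot H^2(\Omega)}$. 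Adding the bounds for $\mathrm{I}$ and $\mathrm{II}$ completes the proof.

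I expect the main obstacle to be precisely this double freezing and the bookkeeping it forces: the first-step correction makes the working equation for $\varrho^n$ involve $A(0)$ (the ``perturbation at $t=0$''), whereas the endpoint singularity of the discrete solution operator in the convolution can only be absorbed by a perturbation frozen at $t_m$; arranging that the $\varrho$-part of the source becomes $(A(t_m)-A(t_j))\varrho^j$ with the crucial factor $t_{m-j}$, and placing $(A(0)-A(t_j))\vartheta^j$ in the correct negative-order space so that the residual numerical convolution is summable with the sharp $t_m$-weight (and without a superfluous logarithm in the $\dot H^2$ case), is the delicate part; the remaining estimates are routine.
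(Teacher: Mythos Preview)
Your proposal is correct and follows essentially the same route as the paper: derive the equation for $\varrho^n$, freeze the operator at $t_m$, represent $\varrho^m$ via the discrete solution operator $E_{\tau,m}^{m-j}$, and split into the $\varrho$-part (bounded by Lemma~\ref{lem:conti-A} with the $t_{m-j}$ factor and \eqref{eqn:E-smoothing} with $\beta=1$) and the $\vartheta$-part (bounded using Lemma~\ref{lem:theta} and a fractional smoothing power). The only cosmetic difference is that the paper fixes the specific value $\beta=\tfrac14$ in the $\dot H^2$ case where you leave $\beta\in(0,\tfrac12)$ free; the resulting Beta-type sums and the $L^2$ case with its $\ell_m^2$ factor are handled identically.
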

\begin{proof}\,
It follows from \eqref{eqn:fully-correct} and \eqref{eqn:fully-correct-01} that $\varrho^n$
satisfies $\varrho^0=0$ and
\begin{align*}
\bPtau^\alpha  \varrho ^n  +  &A(t_m) \varrho^n = \bPtau^\alpha (w^n-\overline{w}^n) + A(t_m)(w^n-\overline{w}^n)\\
   & = (A(t_m)-A(t_n))w^n - (A(t_m)-A(0))\overline{w}^n-(A(0)-A(t_n))w(t_n)\\
   & = (A(t_m) - A(t_n)) \varrho^n - (A(t_n)-A(0))\vartheta^n  ,\quad n=1,2,\ldots,N.
\end{align*}
Using the operator $E_{\tau,m}^n$ in \eqref{eqn:semi-operator}, $\varrho^m$ is represented by
\begin{equation*}
  \varrho^m = \tau \sum_{k=1}^m E_{\tau,m}^{m-k} \big[(A(t_m) - A(t_k)) \varrho^k - (A(t_k)-A(0))\vartheta^k\big].
\end{equation*}
Consequently, by triangle inequality,
\begin{align*}
 \|\varrho^m  \|_{L^2(\Omega)}&\le \tau\sum_{k=1}^m \|E_{\tau,m}^{m-k}(A(t_m)-A(t_k))\varrho^k\|_{L^2(\Omega)}\\
   &\quad +\tau \sum_{k=1}^m\|E_{\tau,m}^{m-k}(A(t_k)-A(0))\vartheta^k\|_{L^2(\Omega)}:={\rm I} + {\rm II}.
\end{align*}
For the term $\rm I$, by \eqref{eqn:E-smoothing} with $\beta=1$ and Lemma \ref{lem:conti-A}, we have
\begin{align*}
 \|A(t_m)E_{\tau,m}^{m-k}\|\|(I-A(t_m)^{-1}A(t_k))\varrho^k\|_{L^2(\Omega)}\
   & \leq ct_{m-k+1}^{-1}t_{m-k}\|\varrho^k\|_{L^2(\Omega)},
\end{align*}
and thus
\begin{align}
 {\rm I}& \leq c\tau \sum_{k=1}^m \|\varrho^k\|_{L^2(\Omega)}. \label{eqn:bdd-rhoI}
\end{align}
For the term ${\rm II}$, we discuss the cases $u_0\in \dot H^{2}(\Omega)$ and $u_0\in L^2(\Omega)$ separately.\\
{\it Case (i): $u_0\in \dot H^{2}(\Omega)$}. The estimate \eqref{eqn:E-smoothing} with $\beta=\frac34$,
Lemmas \ref{lem:conti-A} and \ref{lem:theta} with $\beta=\frac{1}{4}$ imply that ${\rm II}_{m,k}=\|E_{\tau,m}^{m-k}(A(t_k)-A(0))\vartheta^k\|_{L^2(\Omega)}$ is bounded by
\begin{align*}
  {\rm II}_{m,k}
\le ct_{m-k+1}^{\frac{\alpha}{4}-1} t_k \| \vartheta^k\|_{\dot H^{\frac12}(\Omega)}
\le c\tau^2t_{m-k+1}^{\frac{\alpha}{4}-1} t_k^{\frac{3\alpha}{4}-1}\|u_0\|_{\dot H^2(\Omega)}
\end{align*}
and further, since $\tau\sum_{k=1}^m t_{m-k+1}^{\frac{\alpha}{4}-1} t_k^{\frac{3\alpha}{4}-1}
\leq ct_m^{\alpha-1}$, there holds
\begin{equation*}
  {\rm II}\leq \tau\sum_{k=1}^m {\rm II}_{m,k} \leq c\tau^2 t_m^{\alpha-1}\|u_0 \|_{\dot H^2(\Omega)}.
\end{equation*}
%\begin{align*}
%  {\rm II} &\leq \tau \sum_{k=1}^m\|E_{\tau,m}^{m-k}(A(t_k)-A(0))\vartheta^k\|_{L^2(\Omega)}\\
%&\le \tau \sum_{k=1}^m\|E_{\tau,m}^{m-k}A(t_m)^{\frac34}\|
%  \|A(t_m)^{-\frac{3}{4}}A(0)^\frac34\|{\|A(0)^{\frac14}(I-A(0)^{-1}A(t_k)) }  \vartheta^k\|_{L^2(\Omega)}\\
%&\le c\tau \sum_{k=1}^mt_{m-k+1}^{\frac{\alpha}{4}-1} t_k
% \| \vartheta^k\|_{\dot H^{\frac12}(\Omega)}\\
%&\le c\tau^3\sum_{k=1}^m t_{m-k+1}^{\frac{\alpha}{4}-1} t_k^{\frac{3\alpha}{4}-1}\|u_0\|_{\dot H^2(\Omega)}
%\le c\tau^2 t_m^{\alpha-1}\|u_0 \|_{\dot H^2(\Omega)}.
%\end{align*}
{\it Case (ii): $u_0\in L^2(\Omega)$.} By \eqref{eqn:E-smoothing} and Lemmas \ref{lem:theta} and \ref{lem:conti-A},
\begin{align*}
  {\rm II}_{m,k} &\leq \|E_{\tau,m}^{m-k}A(t_m)\|\|A(t_m)^{-1}A(0)\|
  \|(I-A(0)^{-1}A(t_k))\vartheta^k\|_{L^2(\Omega)}\\
 &\le ct_{m-k+1}^{-1} t_k   \| \vartheta^k\|_{L^2(\Omega)}\le c\tau^2\ell_m t_{m-k+1}^{-1} t_k^{-1}\|u_0\|_{L^2(\Omega)}.
%\le c\tau^2 t_m^{-1}\ell_m^2 \|u_0 \|_{L^2(\Omega)},
\end{align*}
This and the inequality $\tau \sum_{k=1}^mt_{m-k+1}^{-1} t_k^{-1} \leq ct_m^{-1}\ell_m$ yield
\begin{align*}
  {\rm II}  %&\leq c\tau \sum_{k=1}^m\|E_{\tau,m}^{m-k}A(t_m)\|\|A(t_m)^{-1}A(0)\|\|(I-A(0)^{-1}A(t_k))\vartheta^k\|_{L^2(\Omega)}\\
  %&\le c\tau \sum_{k=1}^mt_{m-k+1}^{-1} t_k\| \vartheta^k\|_{L^2(\Omega)}\\
%&\le c\tau^3\ell_m \sum_{k=1}^m t_{m-k+1}^{-1} t_k^{-1}\|u_0\|_{L^2(\Omega)}
\le c\tau^2 t_m^{-1}\ell_m^2 \|u_0 \|_{L^2(\Omega)}.
\end{align*}
In either case, combining the bounds on ${\rm I}$ and ${\rm II}$ gives the desired assertion.
%{\color{blue}
%The estimates above are based on Lemmas \ref{lem:theta} and \ref{lem:conti-A}. Therefore, the constants $c$ in Lemma \ref{lem:stability-homo} is bounded as $\alpha\rightarrow 1^-$.
%}
\qed
\end{proof}

Now we can derive error estimates for the homogeneous problem.
\begin{theorem}\label{thm:err-smooth}
Let $u$ and $u^n$ be the solutions to problems \eqref{eqn:pde} and \eqref{eqn:fully-correct} with $f\equiv0$, respectively. Then with $\ell_n=\log(1+t_n/\tau)$,
there holds
\begin{equation*}
  \| u(t_n)-u^n \|_{L^2(\Omega)}   \le \left\{\begin{array}{ll}
    c \tau^2 t_n^{\alpha-2}\|u_0 \|_{\dot H^2(\Omega)}, & \mbox{if } u_0\in \dot H^{2}(\Omega),\\
    c \tau^2 t_n^{-2}\ell_n^2\|u_0 \|_{L^2(\Omega)}, & \mbox{if } u_0\in L^2(\Omega).
  \end{array}\right.
\end{equation*}
\end{theorem}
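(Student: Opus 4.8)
The proof will be a short consequence of the two lemmas just established — Lemma~\ref{lem:theta} for the frozen-coefficient discretization error $\vartheta^n$ and Lemma~\ref{lem:stability-homo} for the perturbation error $\varrho^n$ — together with the decomposition $u^n-u(t_n)=\varrho^n+\vartheta^n$. The plan is: bound $\vartheta^n$ directly, close the recursive estimate for $\varrho^n$ by a discrete Gronwall inequality, add the two bounds, and absorb the subdominant powers of $t_n$ into the leading one by means of the elementary inequality $t_n^{a}\le T^{a-b}t_n^{b}$ valid for $a\ge b$ on $(0,T]$, at the price of a $T$-dependent constant.

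Concretely, I would first take $\beta=0$ in Lemma~\ref{lem:theta}, which gives $\|\vartheta^n\|_{L^2(\Omega)}\le c\tau^2 t_n^{\alpha-2}\|u_0\|_{\dot H^2(\Omega)}$ when $u_0\in\dot H^2(\Omega)$ and $\|\vartheta^n\|_{L^2(\Omega)}\le c\tau^2 t_n^{-2}\ell_n\|u_0\|_{L^2(\Omega)}$ when $u_0\in L^2(\Omega)$; these are already of the asserted order. Next, Lemma~\ref{lem:stability-homo} reads $\|\varrho^m\|_{L^2(\Omega)}\le c\tau\sum_{k=1}^m\|\varrho^k\|_{L^2(\Omega)}+R_m$, where $R_m=c\tau^2 t_m^{\alpha-1}\|u_0\|_{\dot H^2(\Omega)}$ in the smooth case and $R_m=c\tau^2 t_m^{-1}\ell_m^2\|u_0\|_{L^2(\Omega)}$ in the nonsmooth case. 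For $\tau$ small enough that $c\tau\le\tfrac12$ the $k=m$ summand is absorbed into the left-hand side, and a discrete Gronwall inequality then yields $\|\varrho^m\|_{L^2(\Omega)}\le c\big(R_m+\tau\sum_{k=1}^{m-1}R_k\big)$, the accumulated constant being of the form $e^{cT}$ because the Gronwall ``rate'' is the constant $c$ multiplying $\tau$, so that $c\,m\tau\le cT$. One then estimates the residual sum, using $\tau\sum_{k=1}^{m-1}t_k^{\alpha-1}=\tau^{\alpha}\sum_{k<m}k^{\alpha-1}\le ct_m^{\alpha}\le cT^{\alpha}$ in the smooth case (and the analogous bound for $\tau\sum_{k<m}t_k^{-1}\ell_k^2$ in the nonsmooth case), together with $t_m^{\alpha-1}\le T t_m^{\alpha-2}$ and $1\le T^{2-\alpha}t_m^{\alpha-2}$ (and the corresponding inequalities with exponent $-2$) to collapse everything onto $t_m^{\alpha-2}$, respectively $t_m^{-2}$. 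Adding the $\vartheta^n$ and $\varrho^n$ bounds gives the two stated estimates.

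The one genuinely delicate point is the Gronwall step for $\varrho^n$: the forcing $R_m$ is singular and decreasing at $t=0$, so the standard monotone-data form of discrete Gronwall is far too lossy (it would produce only $O(\tau)$), and one must instead use the sharp form $\|\varrho^m\|\le cR_m+c\tau\sum_{k<m}R_k$ and verify that the convolution-type sum $\tau\sum_{k<m}R_k$ is again $O(\tau^2)$, up to logarithmic factors, and of the right order in $t_m$, with all constants independent of $N$ — which is precisely what the $c\tau$ rate (rather than $c$) buys. Keeping accurate track of the powers of $\ell_n$ in the $L^2(\Omega)$-data case is the remaining point that requires care; apart from these, the theorem follows from the two lemmas and elementary manipulations with powers of $t_n\le T$.
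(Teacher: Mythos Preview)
Your plan is essentially identical to the paper's: split $u^n-u(t_n)=\varrho^n+\vartheta^n$, use Lemma~\ref{lem:theta} with $\beta=0$ for $\vartheta^n$, close the recursive estimate of Lemma~\ref{lem:stability-homo} for $\varrho^n$ by a discrete Gronwall inequality, and add. The smooth-data case works exactly as you outline.

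The one real discrepancy is the Gronwall step in the nonsmooth case. Your proposed form $\|\varrho^m\|\le c\big(R_m+\tau\sum_{k<m}R_k\big)$ with $R_k=c\tau^2 t_k^{-1}\ell_k^2$ produces
\[
\tau\sum_{k<m}R_k=c\tau^2\sum_{k<m}k^{-1}\ell_k^{2}\sim c\tau^2\ell_m^{3},
\]
which is one logarithmic factor worse than the stated $\tau^2 t_m^{-2}\ell_m^2$ bound: to absorb $\tau^2\ell_m^3$ into $\tau^2 t_m^{-2}\ell_m^2$ you would need $t_m^2\ell_m\le c$, and this fails for fixed $t_m>0$ as $\tau\to0$. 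The paper avoids this loss by invoking, instead of the generic Gronwall, the specialized log-version Lemma~\ref{lem:gronwall-discrete-log}, which from $\varphi^n\le a t_n^{-1}\ell_n^{p}+b\tau\sum_{j\le n}\varphi^j$ concludes $\varphi^n\le c a t_n^{-1}\ell_n^{p}$ directly, preserving both the $t_n^{-1}$ singularity and the power of $\ell_n$. With that lemma (and Lemma~\ref{lem:gronwall-discrete} with $\mu=1-\alpha$ for the smooth case), the proof is immediate. You correctly flagged the $\ell_n$-bookkeeping as the delicate point; the fix is to cite the sharper Gronwall inequality rather than the generic convolution form.
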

\begin{proof}
It follows directly from Lemma \ref{lem:stability-homo} that
\begin{equation*}
   \| \varrho^m  \|_{L^2(\Omega)} \le c \tau\sum_{k=1}^m\|\varrho^k\|_{L^2(\Omega)}+\left\{\begin{array}{ll}
     c\tau^2 t_m^{\alpha-1}\| u_0 \|_{\dot H^2(\Omega)}, & \mbox{if } u_0\in \dot H^{2}(\Omega),\\
     c\tau^2 t_m^{-1}\ell_m^2\|u_0\|_{L^2(\Omega)}, & \mbox{if } u_0\in L^2(\Omega).
     \end{array}\right.
\end{equation*}
Thus, by the discrete Gronwall's inequality from Lemma \ref{lem:gronwall-discrete} {(with $\mu=1-\alpha$)} and Lemma
\ref{lem:gronwall-discrete-log},
\begin{equation*}
 \| \varrho^m  \|_{L^2(\Omega)}  \le \left\{\begin{array}{ll}
   c \tau^2 t_m^{\alpha-1}\| u_0 \|_{\dot H^2(\Omega)}, & \mbox{if } u_0\in \dot H^{2}(\Omega),\\
   c \tau^2 t_m^{-1}\ell_m^2 \|u_0 \|_{L^2(\Omega)}, & \mbox{if }u_0\in L^2(\Omega).
   \end{array}\right.
\end{equation*}
This, Lemma \ref{lem:theta} and the triangle inequality complete the proof.
The preceding estimates are based on Lemma \ref{lem:stability-homo} and Lemmas
\ref{lem:gronwall-discrete}--\ref{lem:gronwall-discrete-log}. In particular,
applying Lemma \ref{lem:gronwall-discrete} to the case $u_0\in \dot H^{2}(\Omega)$
yields a constant $c$ depending on $1/\alpha$. Therefore, the constants $c$ in
Theorem \ref{thm:err-smooth} is bounded as $\alpha\rightarrow 1^-$.\qed
\end{proof}

\begin{remark}
The error estimate for $u_0\in \dot{H}^2(\Omega)$ in Theorem \ref{thm:err-smooth} is identical with that for
the case of a time-independent elliptic operator, and that for nonsmooth initial data is
also nearly identical, up to the factor $\ell_n^2$ \cite{JinLazarovZhou:SISC2016}. The $\ell_n$
factor is also present for backward Euler
convolution quadrature \cite{JinLiZhou:2017variable} for subdiffusion, and backward Euler method \cite{LuskinRannacher:1982} and
general single-step and multi-step methods \cite{Sammon:1983} for standard parabolic problems with a time-dependent coefficient.
\end{remark}

\subsection{Error analysis for the inhomogeneous problem}
Now we analyze the scheme \eqref{eqn:fully-correct} for $u_0\equiv 0$.
We need the following inequality.
\begin{lemma}\label{lem:bdd-kernel}
For any $\beta\in(0,1/2)$ and $s\in [0,t_m]$, the following inequality holds
\begin{equation*}
  \tau\sum_{k=1}^mt_{m-k+1}^{\beta\alpha-1}(t_{k+1}-s)^{(1-2\beta)\alpha-1}\chi_{[0,t_k]}(s)\leq c(t_m-s)^{(1-\beta)\alpha -1}.
\end{equation*}
\end{lemma}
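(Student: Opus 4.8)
The plan is to estimate the discrete sum by comparison with a continuous integral, splitting the time interval according to the relative positions of $t_{m-k+1}$, $t_{k+1}-s$, and $t_m-s$. First I would fix $s\in[0,t_m]$ and note that the characteristic factor $\chi_{[0,t_k]}(s)$ restricts the summation to indices $k$ with $t_k\ge s$, i.e. $k\ge k_s$ for some $k_s$. For such $k$ one has $t_{k+1}-s\ge \tau>0$ and $t_{m-k+1}\ge\tau>0$, so every term is finite and positive. The exponent $\beta\alpha-1\in(-1,0)$ is integrable, and since $\beta\in(0,1/2)$ the exponent $(1-2\beta)\alpha-1\in(-1,0)$ is integrable as well; the target exponent $(1-\beta)\alpha-1\in(-1,0)$ is likewise integrable. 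Thus the statement is a purely ``deterministic'' kernel estimate of the type $\int_s^{t_m}(t_m-r)^{\beta\alpha-1}(r-s)^{(1-2\beta)\alpha-1}\,\d r \le c(t_m-s)^{(1-\beta)\alpha-1}$, which follows from the Beta-function identity $\int_0^1(1-\sigma)^{a-1}\sigma^{b-1}\,\d\sigma=B(a,b)<\infty$ after the substitution $r=s+\sigma(t_m-s)$, using $a=\beta\alpha$, $b=(1-2\beta)\alpha$, $a+b=(1-\beta)\alpha$.

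The main work is therefore to pass from the Riemann-type sum to that integral uniformly in $m$ and $s$. I would split the sum into $k\le m/2$ and $k>m/2$ (equivalently, $t_{m-k+1}\gtrsim t_m$ versus $t_{k+1}-s\gtrsim t_m-s$, after also separating the cases $s\le t_m/2$ and $s>t_m/2$), and in each regime bound one of the two factors by its value at the nearer endpoint, pulling it out of the sum, and recognize the remaining sum $\tau\sum_k t_{m-k+1}^{\beta\alpha-1}$ or $\tau\sum_k (t_{k+1}-s)^{(1-2\beta)\alpha-1}\chi_{[0,t_k]}(s)$ as a lower Riemann sum for $\int_0^{t}r^{\beta\alpha-1}\,\d r \le c\, t^{\beta\alpha}$ or $\int_s^{t}(r-s)^{(1-2\beta)\alpha-1}\,\d r\le c(t-s)^{(1-2\beta)\alpha}$ respectively; here one uses that a decreasing integrable power can be controlled by the integral of a slightly shifted argument, e.g. $\tau\, t_{k+1}^{a-1}\le c\int_{t_{k-1}}^{t_k} r^{a-1}\,\d r$ when $a\in(0,1)$, together with the quasi-uniformity $t_{k+1}\le 2t_k$ and $t_{m-k+1}\le 2t_{m-k}$ for $k\ge1$. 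Combining the pulled-out endpoint factor with the estimated sum and using $t_{m-k+1}\le ct_m$ (resp. $t_{k+1}-s\le c(t_m-s)$) on the appropriate half then reproduces $(t_m-s)^{(1-\beta)\alpha-1}$ up to a constant depending only on $\alpha$ and $\beta$.

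The step I expect to be the main obstacle is the careful bookkeeping near the singular endpoints — specifically, controlling the handful of terms with $t_{m-k+1}\asymp\tau$ or $t_{k+1}-s\asymp\tau$, where the comparison ``$\tau\times$ value $\le$ integral over an adjacent subinterval'' must be checked directly rather than by shifting, and verifying that the resulting bound does not pick up a hidden logarithmic factor or a constant that degenerates as $\beta\to0$ or $\beta\to1/2$. A convenient way to package this is to prove the auxiliary fact that for any $a,b\in(-1,0)$ with $a+b>-1$ there is $c=c(a,b)$ with $\tau\sum_{j=1}^{n}t_{n-j+1}^{a}\,t_{j}^{b}\le c\,t_{n}^{a+b+1}$ (a discrete convolution-of-powers inequality, itself proved by the same splitting), and then apply it with the argument of the second factor shifted from $t_k$ to $t_{k+1}-s$; the shift is harmless because $t_{k+1}-s\ge \tau$ and $t_{k+1}-s\le t_{k+1}\le 2t_k$ on the relevant range, so $(t_{k+1}-s)^{(1-2\beta)\alpha-1}\le c\,(t_k-s)^{(1-2\beta)\alpha-1}$ up to the endpoint term, which is absorbed separately. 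Once this discrete convolution inequality is in hand, Lemma \ref{lem:bdd-kernel} is immediate with $n=m$, $a=\beta\alpha-1$, $b=(1-2\beta)\alpha-1$, $a+b+1=(1-\beta)\alpha-1$ after translating the time origin to $s$.
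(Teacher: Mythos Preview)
Your proposal is correct in spirit and ultimately lands on the same key ingredient as the paper --- the discrete convolution-of-powers inequality $\tau\sum_{j=1}^{n}t_{n-j+1}^{a}t_{j}^{b}\le c\,t_{n}^{a+b+1}$ applied after translating the origin to $s$ --- but the route you take is considerably more elaborate than necessary. The paper's argument is three lines: fix $s\in[t_{i-1},t_i)$, so that the characteristic function restricts the sum to $k\ge i$; since $s<t_i$ and the exponent $(1-2\beta)\alpha-1<0$, one has $(t_{k+1}-s)^{(1-2\beta)\alpha-1}\le t_{k+1-i}^{(1-2\beta)\alpha-1}$; the resulting sum $\tau\sum_{k=i}^{m}t_{m-k+1}^{\beta\alpha-1}t_{k+1-i}^{(1-2\beta)\alpha-1}$ is exactly a discrete convolution of powers with total length $m-i+1$, hence $\le c\,t_{m-i+1}^{(1-\beta)\alpha-1}\le c(t_m-s)^{(1-\beta)\alpha-1}$, the last inequality again because the exponent is negative and $t_m-s\le t_{m-i+1}$.

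In particular, your worries about ``careful bookkeeping near the singular endpoints'' and the shift inequality $(t_{k+1}-s)^{(1-2\beta)\alpha-1}\le c\,(t_k-s)^{(1-2\beta)\alpha-1}$ are bypassed entirely: by discretizing $s$ to the nearest grid interval first, the translation becomes an exact index shift $k\mapsto k-i+1$, and no separate endpoint analysis or Riemann-sum comparison is needed. Your longer splitting argument would also work, but the paper's version is cleaner and avoids the case analysis.
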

\begin{proof}\,
We denote the left-hand side by ${\rm I}(s)$.
For any $s\in [t_{i-1},t_i)$, $i\leq m$,
\begin{align*}
  {\rm I}(s) & = \tau \sum_{k=i}^mt_{m-k+1}^{\beta\alpha-1}(t_{k+1}-s)^{(1-2\beta)\alpha-1}\\
              & \leq \tau \sum_{k=i}^mt_{m-k+1}^{\beta\alpha-1}t_{k+1-i}^{(1-2\beta)\alpha-1} \leq ct_{m-i+1}^{(1-\beta)\alpha-1} \leq c(t_m-s)^{(1-\beta)\alpha-1}.
\end{align*}
This completes the proof of the lemma.\qed
\end{proof}

The next result gives a bound on $g(t)=(A(0)-A(t))w(t)$ when $u_0\equiv0$.
\begin{lemma}\label{lem:g-1}
Let $g(t) = (A(0) - A(t))w(t)$ {\rm(}with $u_0\equiv0${\rm)}. Then there holds
\begin{equation}\label{eqn:g-1-1}
  \|g'(0)\|_{L^2(\Omega)} \leq c\|f(0)\|_{L^2(\Omega)},
\end{equation}
and further, for any $\beta\in (0,1/2)$
\begin{align}
   & \tau \sum_{k=1}^mt_{m-k+1}^{\beta\alpha-1}t_k \int_0^{t_k}(t_k-s)^{(1-2\beta)\alpha-1}\|g''(s)\|_{\dot H^{-2\beta}(\Omega)}  \label{eqn:g-1-2}  \\
     \leq &ct_m^{\alpha-1}\|f(0)\|_{L^2(\Omega)}+t_m^{\alpha} \|f'(0)\|_{L^2(\Omega)}+t_m\int_0^{t_m}(t_m-s)^{\alpha-1}\|f''(s)\|_{L^2(\Omega)}\d s.\nonumber
\end{align}
\end{lemma}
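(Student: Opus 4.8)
The plan is to prove Lemma~\ref{lem:g-1} in two parts, treating the bound on $g'(0)$ and the weighted sum involving $g''$ separately, exactly as the statement is organized. The function $w(t)$ here solves $\Dal w + A(t)w = f(t)$ with $w(0)=0$, i.e.\ the inhomogeneous problem with zero initial data, so the regularity Theorem~\ref{thm:reg-inhomo} applies directly to $w$ and its derivatives (in the form $\|(t^k w(t))^{(k)}\|_{\dot H^{2\beta}}$). I would first record the decomposition $g'(t) = (A(0)-A(t))w'(t) - A'(t)w(t)$ and $g''(t) = (A(0)-A(t))w''(t) - 2A'(t)w'(t) - A''(t)w(t)$, obtained by Leibniz, and note that by \eqref{Cond-2} the coefficient operators $A'(t)$, $A''(t)$ map $\dot H^{2\beta}$ to $\dot H^{2\beta-2}$ boundedly (the spatial-gradient bounds on the time-derivatives of $a_{ij}$), while by Lemma~\ref{lem:conti-A} the factor $A(0)-A(t)$ gains a factor $t$, i.e.\ $\|(A(0)-A(t))v\|_{\dot H^{2\beta}} \le ct\|v\|_{\dot H^{2\beta}}$.

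For \eqref{eqn:g-1-1}: take $t\to 0^+$ in the expression for $g'(t)$. The term $(A(0)-A(t))w'(t)$ has norm $\le ct\|w'(t)\|_{L^2}$; by Theorem~\ref{thm:reg-inhomo} with $k=1$, $\beta=0$ (equivalently Theorem~\ref{thm:reg-0}(ii)), $\|w'(t)\|_{L^2}\le ct^{\alpha-1}\|f(0)\|_{L^2} + c\int_0^t(t-s)^{\alpha-1}\|f'(s)\|\,\d s$, so $t\|w'(t)\|_{L^2}\le ct^{\alpha}\|f(0)\|_{L^2}+\ldots \to 0$ unless one is careful---actually it tends to $0$, which is too crude. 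The right move is instead to write $g'(t)$ using $g(t)=(A(0)-A(t))w(t)$ and the PDE: since $w(0)=0$, one has $\Dal w(t)\to$ something and $(A(0)-A(t))w(t)$ is $O(t)\cdot O(\|w(t)\|)$; but $\|w(t)\|_{\dot H^2}$ need not vanish. More precisely, using $\|w(t)\|_{\dot H^2}\le c t^\alpha\|f(0)\|+\ldots$ from Theorem~\ref{thm:reg-inhomo} with $k=0,\beta=1$, and $A'(t)w(t)$ contributing $\|A'(t)w(t)\|_{L^2}\le c\|w(t)\|_{\dot H^1}\to 0$, together with $\|(A(0)-A(t))w'(t)\|_{L^2}\le ct\|w'(t)\|_{\dot H^2}$ where $t\|w'(t)\|_{\dot H^2}\le c\|f(0)\|_{L^2}+\ldots$ stays bounded (Theorem~\ref{thm:reg-inhomo}, $k=1$, $\beta=1$: $\|(tw(t))'\|_{\dot H^2}=\|w+tw'\|_{\dot H^2}$, hence $t\|w'(t)\|_{\dot H^2}\le c(\|f(0)\|+\|f'(0)\|t+\ldots)$). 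Taking $t\to0$, only the $\|f(0)\|_{L^2}$-term survives with a finite constant, giving \eqref{eqn:g-1-1}. I expect the delicate point here is extracting the nonvanishing limit cleanly; one passes through $t\|w'(t)\|_{\dot H^2}$ rather than $\|w'(t)\|_{L^2}$.

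For \eqref{eqn:g-1-2}: bound $\|g''(s)\|_{\dot H^{-2\beta}}$ pointwise via the three-term decomposition above. Each term is controlled by $c\,s\|w''(s)\|_{\dot H^{2-2\beta}} + c\|w'(s)\|_{\dot H^{2-2\beta}} + c\|w(s)\|_{\dot H^{2-2\beta}}$ (using the $\dot H^{-2\beta}\leftarrow\dot H^{2-2\beta}$ mapping of $A',A''$ and the extra $s$ from $A(0)-A(s)$). Now feed in Theorem~\ref{thm:reg-inhomo} with the appropriate $k$ and with $2\beta$ replaced by $2-2\beta$ (note $1-\beta\in(1/2,1)$ so we are in the $\beta<1$ regime of that theorem). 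For $k=2$ this bounds $\|(s^2 w(s))''\|_{\dot H^{2-2\beta}}$; combined with the lower-$k$ estimates one obtains, after dividing out the $s$-powers and collecting, a pointwise bound of the schematic form
\begin{equation*}
\|g''(s)\|_{\dot H^{-2\beta}} \le c\Big( s^{\beta\alpha-1}\|f(0)\|_{L^2} + s^{\beta\alpha}\|f'(0)\|_{L^2} + \int_0^s (s-r)^{\beta\alpha-1}\|f''(r)\|_{L^2}\,\d r\Big),
\end{equation*}
up to harmless lower-order terms. Substituting this into the double sum, the inner integral $\int_0^{t_k}(t_k-s)^{(1-2\beta)\alpha-1}\|g''(s)\|_{\dot H^{-2\beta}}\,\d s$ is handled by the Beta-function convolution identities $\int_0^t (t-s)^{a-1}s^{b-1}\,\d s = c\,t^{a+b-1}$ and $\int_0^t(t-s)^{a-1}\int_0^s(s-r)^{b-1}h(r)\,\d r\,\d s = c\int_0^t(t-r)^{a+b-1}h(r)\,\d r$; then multiplying by $t_k$ and applying the discrete kernel bound of Lemma~\ref{lem:bdd-kernel} (with its exponents $\beta\alpha-1$ and $(1-2\beta)\alpha-1$ collapsing to $(1-\beta)\alpha-1$) converts $\tau\sum_{k=1}^m t_{m-k+1}^{\beta\alpha-1}(\cdot)$ into $c(t_m-s)^{(1-\beta)\alpha-1}$, yielding the claimed right-hand side after a final application of the same Beta identities. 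The main obstacle is bookkeeping: getting the exponents of $s$ and of $(t_k-s)$ to line up so that Lemma~\ref{lem:bdd-kernel} is applicable with precisely the stated parameters, and making sure the terms involving $\|f(0)\|$, $\|f'(0)\|$ and $\|f''\|$ each emerge with the advertised powers of $t_m$; I would organize this by treating the three source contributions to $g''$ one at a time.
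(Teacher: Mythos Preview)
Your overall strategy is correct and matches the paper's: decompose $g'$ and $g''$ by Leibniz, bound each piece using Lemma~\ref{lem:conti-A} (for the factor $A(0)-A(t)$) and the regularity Theorem~\ref{thm:reg-inhomo}, and for the second estimate combine the pointwise bound on $\|g''(s)\|_{\dot H^{-2\beta}}$ with Lemma~\ref{lem:bdd-kernel}. Two points need to be fixed.

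\medskip
\textbf{Part \eqref{eqn:g-1-1}.} Your bound $\|A'(t)w(t)\|_{L^2}\le c\|w(t)\|_{\dot H^1}$ is wrong: $A'(t)v=-\nabla\cdot(\partial_t a\,\nabla v)$ contains second derivatives of $v$, so one only has $\|A'(t)w(t)\|_{L^2}\le c\|w(t)\|_{\dot H^2}$. This is exactly what the paper writes. The conclusion is unaffected since $\|w(t)\|_{\dot H^2}\to 0$ as $t\to 0$ (from the $k=0$, $\beta=1$ case of Theorem~\ref{thm:reg-inhomo}, using $w(0)=0$), and your treatment of the other term $(A(0)-A(t))w'(t)$ via $t\|w'(t)\|_{\dot H^2}$ is correct. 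The paper does not take a limit at all but simply records
\[
\|g'(t)\|_{L^2}\le ct\|u'(t)\|_{\dot H^2}+c\|u(t)\|_{\dot H^2}
\]
and reads off the bound at $t=0$ from Theorem~\ref{thm:reg-inhomo}; this is cleaner than your limit argument.

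\medskip
\textbf{Part \eqref{eqn:g-1-2}.} Your pointwise bound on $\|g''(s)\|_{\dot H^{-2\beta}}$ is right (the paper has an extra factor $s$ on the $f''$ term, which is sharper but either form suffices). The order of operations you describe, however, does not work as written. Lemma~\ref{lem:bdd-kernel} is a bound on $\tau\sum_k t_{m-k+1}^{\beta\alpha-1}(t_{k+1}-s)^{(1-2\beta)\alpha-1}\chi_{[0,t_k]}(s)$ \emph{as a function of $s$}; if you first evaluate the inner integral over $s$, there is no $s$ left and the lemma cannot be applied in the form you state. The paper proceeds in the opposite order: first bound $t_k\le t_m$ and pull it outside, then swap sum and integral, then apply Lemma~\ref{lem:bdd-kernel} inside the integral to obtain
\[
\mathrm{LHS}\ \le\ ct_m\int_0^{t_m}(t_m-s)^{(1-\beta)\alpha-1}\|g''(s)\|_{\dot H^{-2\beta}}\,\d s,
\]
and only then substitute the pointwise bound on $\|g''\|$ and evaluate the resulting single integral via the Beta identity. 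If you insist on computing the inner integral first, you can finish, but for the $f''$ contribution you will need a variant of Lemma~\ref{lem:bdd-kernel} with exponent $(1-\beta)\alpha-1$ in place of $(1-2\beta)\alpha-1$, which you would have to state and prove separately; the paper's ordering avoids this.
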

\begin{proof}
It follows from Lemma \ref{lem:conti-A} that
\begin{align*}
  \|g'(t)\|_{L^2(\Omega)} & \leq \|(A(0)-A(t))w'(t)\|_{L^2(\Omega)} + \|A'(t)w(t)\|_{L^2(\Omega)}\\
     & \leq ct\|u'(t)\|_{\dot H^2(\Omega)} + c\|u(t)\|_{\dot H^2(\Omega)}.
\end{align*}
Then by  Theorem \ref{thm:reg-inhomo}, $\|g'(0)\|_{L^2(\Omega)} \leq c\|f(0)\|_{L^2(\Omega)}$, showing the
estimate \eqref{eqn:g-1-1}. Next, by Lemma \ref{lem:bdd-kernel}, the left hand side (LHS) of \eqref{eqn:g-1-2} is bounded by
\begin{align*}
  {\rm LHS} & \leq t_m\int_0^{t_m}\Big(\tau\sum_{k=1}^mt_{m-k+1}^{\beta\alpha-1}(t_k-s)^{(1-2\beta)\alpha-1}\chi_{[0,t_k]}(s)\Big)\|g''(s)\|_{H^{-2\beta}(\Omega)}\d s\\
     & \leq ct_m\int_0^{t_m}(t_m-s)^{(1-\beta)\alpha-1}\|g''(s)\|_{H^{-2\beta}(\Omega)}\d s.
\end{align*}
Since $g''(t) = (A(0)-A(t)u''(t) - 2A'(t)u'(t) - A''(t)u(t)$, Theorem \ref{thm:reg-inhomo} implies
\begin{align*}
\|g''(t)\|_{H^{-2\beta}(\Omega)}
&\le ct \|u''(t)\|_{H^{2-2\beta}(\Omega)} + c\| u'(t)\|_{H^{2-2\beta}(\Omega)}+c\| u(t)\|_{H^{2-2\beta}(\Omega)}\\
& \le c t^{\beta\alpha-1} \| f(0) \|_{L^2(\Omega)} +  ct^{\beta\alpha} \| f'(0) \|_{L^2(\Omega)}\\
  &\quad + ct\int_0^t (t-s)^{\beta\alpha-1} \| f''(s) \|_{L^2(\Omega)}\,\d s.
\end{align*}
Combining the last two estimates yields the desired assertion.\qed
\end{proof}

Now we can derive error estimates for the inhomogeneous problem.
\begin{theorem}\label{thm:err-inhomo}
Let $u$ and $u^n$ be the solutions to \eqref{eqn:pde} and \eqref{eqn:fully-correct} with $u_0=0$ and $f\in C^1([0,T];L^2(\Omega))$ and $\int_0^t(t-s)^{\alpha-1}\|f''(s)\|_{L^2(\Omega)}\d s<\infty$, respectively. Then
under conditions \eqref{Cond-1}--\eqref{Cond-2}, there holds
\begin{align*}
  \| u(t_n)-u^n \|_{L^2(\Omega)}   \le& c\tau^2\Big(t_n^{\alpha-2}\|f(0)\|_{L^2(\Omega)}+t_n^{\alpha-1}\|f'(0)\|_{L^2(\Omega)}\\
    &+\int_0^{t_n}(t_n-s)^{\alpha-1}\|f''(s)\|_{L^2(\Omega)}\d s\Big).
\end{align*}
\end{theorem}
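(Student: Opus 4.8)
\emph{Proof proposal.} The plan is to follow the structure used for the homogeneous problem. Since $u_0\equiv0$ we have $w(t):=u(t)-u(0)=u(t)$, which solves \eqref{eqn:pde-fix} with source $g(t)=(A(0)-A(t))w(t)+f(t)$; note $g(0)=f(0)$ and $g'(0)=f'(0)$. I would introduce the intermediate solution $\overline w^n$ of \eqref{eqn:fully-correct-01} and split the error as $e^n=u^n-u(t_n)=\varrho^n+\vartheta^n$ with $\varrho^n=w^n-\overline w^n$, $\vartheta^n=\overline w^n-w(t_n)$, then split further $w=w_0+w_1$ and $\overline w^n=\overline w_0^n+\overline w_1^n$ as in \eqref{eqn:w0}--\eqref{eqn:w1}, \eqref{def-w0n}--\eqref{def-w1n} (with $u_0=0$, so $w_0$ carries the source $(A(0)-A(t))w(t)$ and $w_1$ the source $f$), setting $\vartheta_j^n=\overline w_j^n-w_j(t_n)$, $\vartheta^n=\vartheta_0^n+\vartheta_1^n$.

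First I would estimate $\vartheta_0^n$ and $\vartheta_1^n$ in $\dot H^{2\beta}(\Omega)$ for small $\beta\in[0,1/2)$ via Lemma \ref{lem:sbd-0}(i). Taking $g=f$ (with $\gamma=0$) bounds $\vartheta_1^n$ directly in terms of $\|f(0)\|_{L^2(\Omega)}$, $\|f'(0)\|_{L^2(\Omega)}$ and $\int_0^{t_n}(t_{n+1}-s)^{(1-\beta)\alpha-1}\|f''(s)\|_{L^2(\Omega)}\,\d s$. For $\vartheta_0^n$ the source $(A(0)-A(t))w(t)$ vanishes at $t=0$, so the $\|g(0)\|$–term drops; using $\|g'(0)\|_{L^2(\Omega)}\le c\|f(0)\|_{L^2(\Omega)}$ from \eqref{eqn:g-1-1} and the pointwise bound $\|g''(s)\|_{\dot H^{-2\gamma}(\Omega)}\le cs^{\gamma\alpha-1}\|f(0)\|_{L^2(\Omega)}+cs^{\gamma\alpha}\|f'(0)\|_{L^2(\Omega)}+cs\int_0^s(s-r)^{\gamma\alpha-1}\|f''(r)\|_{L^2(\Omega)}\,\d r$ (which follows from Theorem \ref{thm:reg-inhomo}, exactly as inside the proof of Lemma \ref{lem:g-1}), together with elementary Beta–function integrals and the bound $t_n\le T$, reduces the $\vartheta_0^n$–contribution to the same three terms. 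Taking $\beta=0$ already shows that $\|\vartheta^n\|_{L^2(\Omega)}$ is bounded by the asserted right-hand side.

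Next I would control $\varrho^n$. As in the proof of Lemma \ref{lem:stability-homo}, subtracting \eqref{eqn:fully-correct-01} from \eqref{eqn:fully-correct} shows that for fixed $m$, $\bPtau^\alpha\varrho^n+A(t_m)\varrho^n=(A(t_m)-A(t_n))\varrho^n-(A(t_n)-A(0))\vartheta^n$ with $\varrho^0=0$, hence
\[
\varrho^m=\tau\sum_{k=1}^m E_{\tau,m}^{m-k}\big[(A(t_m)-A(t_k))\varrho^k-(A(t_k)-A(0))\vartheta^k\big].
\]
The first sum is handled by \eqref{eqn:E-smoothing} with $\beta=1$ and Lemma \ref{lem:conti-A}, giving $c\tau\sum_{k=1}^m\|\varrho^k\|_{L^2(\Omega)}$. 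For the second sum I would write $A(t_k)-A(0)=A(t_k)(I-A(t_k)^{-1}A(0))$, factor $A(t_k)=A(t_m)^{1-\beta}\cdot A(t_m)^{-(1-\beta)}A(t_k)^{1-\beta}\cdot A(t_k)^{\beta}$, and apply \eqref{eqn:E-smoothing} with exponent $1-\beta$, the uniform bound on $A(t_m)^{-(1-\beta)}A(t_k)^{1-\beta}$, and Lemma \ref{lem:conti-A}, obtaining a bound $c\tau\sum_{k=1}^m t_{m-k+1}^{\beta\alpha-1}t_k\|\vartheta^k\|_{\dot H^{2\beta}(\Omega)}$. Inserting the $\dot H^{2\beta}$–estimates above, the $\vartheta_0^k$–part is (with the choice $\gamma=\beta$) precisely the quantity bounded in Lemma \ref{lem:g-1}, which gives the three desired terms, and the $\vartheta_1^k$–part is collapsed by the kernel estimate Lemma \ref{lem:bdd-kernel} (or a minor variant of it) after using $t_k\le T$.

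Finally, the inequality $\|\varrho^m\|_{L^2(\Omega)}\le c\tau\sum_{k=1}^m\|\varrho^k\|_{L^2(\Omega)}+(\text{asserted RHS})$ and the discrete Gronwall inequality (Lemma \ref{lem:gronwall-discrete}) yield $\|\varrho^m\|_{L^2(\Omega)}\le c\,(\text{asserted RHS})$, and the triangle inequality $e^n=\varrho^n+\vartheta^n$ completes the proof. I expect the main obstacle to be the second sum in the representation of $\varrho^m$: one has to choose the interpolation power $\beta$ (and the dual order $\gamma$ in Lemma \ref{lem:sbd-0}(i), with $\gamma=\beta$ on the $\vartheta_0$–piece so that Lemma \ref{lem:g-1} applies verbatim) so that, after invoking the smoothing estimate, Lemma \ref{lem:conti-A} and the kernel bounds, the nested sums and convolution integrals collapse \emph{exactly} to $\tau^2\big(t_n^{\alpha-2}\|f(0)\|_{L^2(\Omega)}+t_n^{\alpha-1}\|f'(0)\|_{L^2(\Omega)}+\int_0^{t_n}(t_n-s)^{\alpha-1}\|f''(s)\|_{L^2(\Omega)}\,\d s\big)$ — with no residual singularity at $s=0$ or $s=t_n$ and, in contrast with the nonsmooth–initial-data case, without any logarithmic factor.
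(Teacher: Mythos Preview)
Your proposal is correct and follows essentially the same route as the paper: split $e^n=\varrho^n+\vartheta^n$, use the decomposition $w=w_0+w_1$ and Lemma~\ref{lem:sbd-0}(i) (with $\gamma=0$ for $\vartheta_1^n$ and $\gamma=\beta$ for $\vartheta_0^n$) to bound $\|\vartheta^n\|_{\dot H^{2\beta}(\Omega)}$, then represent $\varrho^m$ via $E_{\tau,m}^{m-k}$, estimate the second sum by $c\tau\sum_{k}t_{m-k+1}^{\beta\alpha-1}t_k\|\vartheta^k\|_{\dot H^{2\beta}(\Omega)}$, collapse it via Lemma~\ref{lem:g-1} (and the kernel estimate of Lemma~\ref{lem:bdd-kernel}, with the obvious variant $(1-2\beta)\alpha\to(1-\beta)\alpha$ for the $R(t_k)$ piece), and close with Lemma~\ref{lem:gronwall-discrete}. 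The only cosmetic difference is that you unpack the pointwise bound on $\|g''(s)\|_{\dot H^{-2\beta}(\Omega)}$ explicitly, whereas the paper hides it inside the proof of Lemma~\ref{lem:g-1}.
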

\begin{proof}
The overall proof strategy is similar to Theorem \ref{thm:err-smooth}.
First, we bound $\vartheta^n:=\overline w^n-w(t_n)$. By Lemma \ref{lem:sbd-0}(i),
 for any $\beta\in[0,1/2)$, there holds
\begin{align*}
\|\overline{w}_1^n - w_1(t_n) \|_{H^{2\beta}(\Omega)} & \leq  c\tau^2R(t_n).
\end{align*}
with $R(t_n)$ defined by
\begin{align*}
R(t_n) &= t_n^{(1-\beta)\alpha-2}\| f(0)\|_{L^2(\Omega)}  + t_n^{(1-\beta)\alpha-1}\| f'(0)\|_{L^2(\Omega)}\\
 &\quad+ \int_0^{t_n} (t_{n+1}-s)^{(1-\beta)\alpha-1} \|f''(s) \|_{L^2(\Omega)} \,\d s.
\end{align*}
Meanwhile, for any $\beta\in[0,1/2)$, by Lemma \ref{lem:sbd-0}(i) and \eqref{eqn:g-1-1}, with $g(t)=(A(0)-A(t))u(t)$,
\begin{align*}
 &\| \overline{w}_0^n-w_0(t_n) \|_{\dot H^{2\beta}(\Omega)}\\
\le& c\tau^2\Big(t_n^{(1-\beta)\alpha-1}\| f(0)\|_{L^2(\Omega)} + \int_0^{t_n}(t_{n+1}-s)^{(1-2\beta)\alpha-1}\| g''(s)\|_{\dot H^{-2\beta}(\Omega)} \d s\Big).
\end{align*}
Thus, by the splitting \eqref{eqn:split-vartheta} and triangle inequality, for any $\beta\in[0,1/2)$,
\begin{align*}%\label{eqn:bdd-vartheta-inhomo}
\| \vartheta^n\|_{\dot H^{2\beta}(\Omega)}\le& c\tau^2R(t_n)
+ c\tau^2\int_0^{t_n}(t_{n+1}-s)^{(1-2\beta)\alpha-1}\|g''(s)\|_{\dot H^{-2\beta}(\Omega)} \d s.
\end{align*}
Next we bound $\varrho^n:=w^n-\overline w^n$, by repeating the argument for Lemma \ref{lem:stability-homo}.
The term ${\rm I}$ can be bounded as \eqref{eqn:bdd-rhoI}. Further, by \eqref{eqn:E-smoothing} and Lemma \ref{lem:conti-A},
for any $\beta\in(0,1/2)$,
\begin{align*}
  {\rm II} &\leq \tau \sum_{k=1}^m\|E_{\tau,m}^{m-k}(A(t_k)-A(0))\vartheta^k\|_{L^2(\Omega)}
  %&\leq \tau \sum_{k=1}^m\|E_{\tau,m}^{m-k}A(t_m)^{1-\beta}\|\|A(t_m)^{\beta-1}A(0)^{1-\beta}\|
  %{\|A(0)^{\beta}(I-A(0)^{-1}A(t_k)) }  \vartheta^k\|_{L^2(\Omega)}\\
  \le c\sum_{k=1}^mt_{m-k+1}^{\beta\alpha-1}t_k\|\vartheta^k\|_{\dot H^{2\beta}(\Omega)}.
\end{align*}
Then the preceding bound on $\vartheta^n$ implies
\begin{align*}
  {\rm II} & \leq c\tau^3 \sum_{k=1}^mt_{m-k+1}^{\beta\alpha-1}t_k\Big(R(t_k) +\int_0^{t_k}(t_{k+1}-s)^{(1-2\beta)\alpha-1}\|g''(s)\|_{\dot H^{-2\beta}(\Omega)} \Big).
\end{align*}
This and \eqref{eqn:g-1-2} imply
\begin{align*}
   \| \varrho^m  \|_{L^2(\Omega)} \le &c \tau\sum_{k=1}^m\|\varrho^k\|_{L^2(\Omega)}+c\tau^2 \Big(t_m^{\alpha-1}\|f(0)\|_{L^2(\Omega)}+t_m^{\alpha}\|f'(0)\|_{L^2(\Omega)}\\
      &+t_m\int_0^{t_n}(t_n-s)^{\alpha-1}\|f''(s)\|_{L^2(\Omega)}\d s\Big).
\end{align*}
Thus, by the discrete Gronwall's inequality from Lemma \ref{lem:gronwall-discrete} with {$\mu=1-\alpha$},
\begin{equation*}
\begin{split}
 \| \varrho^m  \|_{L^2(\Omega)}    \le &c \tau^2\Big(t_m^{\alpha-1}\|f(0)\|_{L^2(\Omega)}+t_m^\alpha \|f'(0)\|_{L^2(\Omega)}\\
   &+t_m\int_0^{t_n}(t_n-s)^{\alpha-1}\|f''(s)\|_{L^2(\Omega)}\d s\Big),
\end{split}
\end{equation*}
{where the constant $c$ depends on $\alpha$ as $O(\alpha^{-1})$.}
This and the bound on $\vartheta^n$ with $\beta=0$ complete the proof.\qed
\end{proof}

\begin{remark}
The error estimate in Theorem \ref{thm:err-inhomo} is identical with that for the
subdiffusion model with a time-independent diffusion coefficient \cite{JinLazarovZhou:SISC2016}.
\end{remark}

\begin{remark}
In the proof of Theorems \ref{thm:err-smooth} and \ref{thm:err-inhomo}, (discrete) Gronwall's inequality
was employed a few times to bound $\varrho^m$. This leads to a dependence on $\alpha$ as $1/\alpha$,
which is nevertheless uniformly bounded on $\alpha$ for $\alpha\to1^-$. Further,
the constants in the bounds on $\vartheta$ are also bounded. Thus, the constants in Theorems \ref{thm:err-smooth}
and \ref{thm:err-inhomo} are bounded as the fractional order $\alpha\to1^-$. 
%This differs from several existing analysis of time stepping schemes, e.g., with graded meshes \cite{StynesO'Riordan:2017,LiaoMcLean:2019,JinLiZhou:2017variable}. 
We refer to \cite{ChenStynes:2019} for an in-depth discussion and many further references on the important issue of $\alpha$-robustness.
\end{remark}

\section{Numerical results and discussions}\label{sec:numer}
Now we present numerical results to illustrate the convergence behavior
of the scheme \eqref{eqn:fully-correct}. To this end, we consider
the domain $\Omega=(0,1)$ and the subdiffusion model \eqref{eqn:pde}
with a time-dependent diffusion operator $A(t)=-(2+
\cos(t))\Delta$. We consider the following three examples:
\begin{itemize}
\item[(a)] $u_0(x)=x^{-1/4}\in H^{1/4-\epsilon}(\Omega)$ with $\epsilon\in(0,1/4)$ and $f\equiv0$.
\item[(b)] $u_0(x)=0$  and $f=e^{t}(1+\chi_{(0,\frac12)}(x))$.
\item[(c)] $u_0(x)=0$  and $f=t^{0.5}x(1-x)$.
\end{itemize}

To discretize the problem, we divide the domain $\Omega$ into $M$ subintervals of equal length $h=1/M$.
The numerical solutions are computed by the standard Galerkin FEM (with P1 element) in space, and BDF2-CQ
in time. Since the spatial convergence was already studied in \cite{JinLiZhou:2017variable},
we only study the temporal convergence below. To this end, we fix a small spatial mesh size $h=1/1000$ so
that the spatial discretization error is negligible, and compute the $L^2(\Omega)$ error:
\begin{equation*}
  e(t_N) =  \|u_h^N-u_h(t_N)\|_{L^2(\Omega)}.
\end{equation*}
Since the exact semidiscrete solution $u_h(t)$ is unavailable, we compute the reference solutions on a finer temporal
mesh with a time stepsize $\tau=1/5000$.

The numerical results for the homogeneous case (a) by the schemes \eqref{eqn:fully} and \eqref{eqn:fully-correct}
are presented in Tables \ref{tab:a} and \ref{tab:a-correct}, respectively. It is clearly observed that
the vanilla BDF2-CQ scheme \eqref{eqn:fully} can only achieve a first-order convergence, whereas the corrected scheme
\eqref{eqn:fully-correct} achieves the desired second-order convergence. The convergence is fairly robust
with respect to the fractional order $\alpha$, despite the low regularity of the initial
data $u_0$. Further, the error is larger when the time $t_N$ gets closer to zero, which agrees well with the regularity theory
in that the second-order temporal derivative of the solution has strong singularity at $t=0$, cf. Theorem \ref{thm:reg-homo}.

\begin{table}[hbt!]
\caption{Temporal errors $e$ for Example (a), uncorrected BDF2-CQ \eqref{eqn:fully} with $\tau =1/N$.}\label{tab:a}
\vspace{0cm}{\setlength{\tabcolsep}{7pt}
	\centering
	\begin{tabular}{ |c|c|cccccc|c|}
		\hline
	$t_N$  & $\alpha\backslash N$ &$10$ &$20$ & $40$ & $80$ & $160$ & $320$  &rate \\
		\hline
	      & $0.25$      & 2.96e-4 & 1.49e-4 & 7.45e-5 & 3.73e-5 & 1.86e-5 & 9.32e-6   & 1.00  \\
     $ 1$ & $0.50$      & 4.12e-4 & 2.12e-4 & 1.07e-4 & 5.37e-5 & 2.69e-5  &1.35e-5   & 1.00  \\
          & $0.75$      & 3.00e-4 & 1.62e-4 & 8.34e-5 & 4.22e-5 & 2.12e-5  &1.06e-5  & 1.00   \\
    \hline
    	 & $0.25$       & 1.16e-3 & 5.80e-4 & 2.89e-4 & 1.45e-4 & 7.23e-5 & 3.62e-5  & 1.00  \\
   $10^{-3}$ & $0.50$     & 5.49e-3 & 2.70e-3 & 1.34e-3 & 6.59e-4 & 3.34e-4 & 1.67e-4   & 1.00  \\
     & $0.75$      & 5.18e-3 & 2.54e-3 & 1.26e-3 & 6.28e-4 & 3.13e-4 & 1.57e-4   & 1.00 \\
    \hline
   \end{tabular}}
\end{table}

\begin{table}[hbt!]
\caption{Temporal errors $e$ for Example (a), corrected BDF2-CQ \eqref{eqn:fully-correct} with $\tau =1/N$.}\label{tab:a-correct}
\vspace{0cm}{\setlength{\tabcolsep}{7pt}
	\centering
	\begin{tabular}{ |c|c|cccccc|c|}
		\hline
	$t_N$  & $\alpha\backslash N$ &$10$ &$20$ & $40$ & $80$ & $160$ & $320$  &rate \\
		\hline
	      & $0.25$      & 4.20e-5 & 9.77e-6 & 2.36e-6 & 5.79e-7 & 1.44e-7 & 3.57e-8   & 2.01  \\
     $ 1$ & $0.50$      & 8.82e-5 & 2.04e-5 & 4.91e-6 & 1.20e-6 & 2.98e-7  &7.41e-8   & 2.01  \\
          & $0.75$      & 1.01e-4 & 2.34e-5 & 5.60e-6 & 1.37e-6 & 3.38e-7  &8.41e-8  & 2.01   \\
    \hline
    	 & $0.25$       & 1.50e-4 & 3.49e-5 & 8.44e-6 & 2.07e-6 & 5.14e-7 & 1.28e-7  & 2.01 \\
   $10^{-3}$ & $0.50$    & 4.77e-4 & 1.13e-4 & 2.74e-5 & 6.77e-6 & 1.68e-6 & 4.19e-7   & 2.00 \\
     & $0.75$      & 3.68e-4 & 8.67e-5 & 2.11e-5 & 5.21e-6 & 1.29e-6 & 3.22e-7   & 2.00  \\
    \hline
   \end{tabular}}
\end{table}

The numerical results for Examples (b) and (c) are presented in Tables \ref{tab:b}--\ref{tab:c-correct},
where the source term $f$ is smooth and nonsmooth in time, respectively. Note that for Example (c), the corrected and
uncorrected schemes are identical, since $f(0)\equiv0$. The observations from Example (a) remain valid
for the inhomogeneous problems: the correction at the first step in the scheme \eqref{eqn:fully-correct}
can restore the desired second-order convergence, whereas the vanilla BDF2--CQ scheme \eqref{eqn:fully}
can only give a first-order convergence, and the convergence rate does not depend on the fractional order $\alpha$.

The second-order convergence of the scheme \eqref{eqn:fully-correct} in Theorem \ref{thm:err-inhomo}
requires suitable temporal regularity of the source $f$, i.e., $\int_0^t(t-s)^{\alpha-1}\|f''(s)\|_{L^2(\Omega)}
\d s<\infty$, in the absence of which, the convergence rate suffers from a loss.
This is clearly observed from the numerical results in Table \ref{tab:c-correct} for Example (c), where
the source term $f$ does not satisfy the condition. Actually, by means of interpolation, the theoretical
convergence rate is $O(\tau^{3/2})$. The corrected scheme \eqref{eqn:fully-correct} can achieve a convergence rate
$O(\tau^{3/2})$, which agrees well with the theoretical one and is faster than the first-order convergence as
exhibited by the scheme \eqref{eqn:fully}. These numerical results show clearly the robustness and
efficiency of the corrected scheme \eqref{eqn:fully-correct}.

\begin{table}[hbt!]
\caption{Temporal errors $e$ for Example (b), uncorrected BDF2-CQ \eqref{eqn:fully} with $\tau =1/N$.}\label{tab:b}
\vspace{0cm}{\setlength{\tabcolsep}{7pt}
	\centering
	\begin{tabular}{ |c|c|cccccc|c|}
		\hline
	$t_N$  & $\alpha\backslash N$ &$10$ &$20$ & $40$ & $80$ & $160$ & $320$  &rate \\
		\hline
	      & $0.25$      & 2.22e-5 & 1.15e-5 & 5.81e-6 & 2.92e-6 & 1.46e-6  & 7.33e-7   & 1.00  \\
     $ 1$ & $0.50$      & 3.09e-5 & 1.64e-5 & 8.35e-6 & 4.21e-6 & 2.11e-6  & 1.06e-6   & 1.00  \\
          & $0.75$      & 2.36e-5 & 1.28e-5 & 6.57e-6 & 3.32e-6 & 1.67e-6  & 8.36e-6  & 1.00   \\
    \hline
      	  & $0.25$       & 9.12e-5 & 4.56e-5 & 2.28e-5 & 1.14e-5 & 5.70e-6 & 2.85e-6  & 1.00  \\
$10^{-3}$ & $0.50$     & 4.32e-4 & 2.12e-4 & 1.05e-4 & 5.26e-5 & 2.62e-5 & 1.31e-5    & 1.00  \\
          & $0.75$      & 3.54e-4 & 1.74e-4 & 8.61e-5 & 4.28e-5 & 2.14e-5 & 1.07e-5   & 1.00 \\
    \hline
   \end{tabular}}
\end{table}

\begin{table}[hbt!]
\caption{Temporal errors $e$ for Example (b), corrected BDF2-CQ \eqref{eqn:fully-correct} with $\tau =1/N$.}\label{tab:b-correct}
\vspace{0cm}{\setlength{\tabcolsep}{7pt}
	\centering
	\begin{tabular}{ |c|c|cccccc|c|}
		\hline
	$t_N$  & $\alpha\backslash N$ &$10$ &$20$ & $40$ & $80$ & $160$ & $320$  &rate \\
		\hline
	      & $0.25$      & 4.46e-6 & 1.04e-6 & 2.51e-7 & 6.16e-8 & 1.53e-8 & 3.80e-9   & 2.01  \\
     $ 1$ & $0.50$      & 8.51e-6 & 1.96e-6 & 4.70e-7 & 1.15e-7 & 2.85e-8 & 7.09e-9   & 2.01  \\
          & $0.75$      & 8.13e-6 & 1.85e-6 & 4.40e-7 & 1.07e-7 & 2.64e-8 & 6.56e-9   & 2.01   \\
    \hline
    	  & $0.25$      & 1.18e-5 & 2.75e-6 & 6.64e-7 & 1.63e-7 & 4.05e-8 & 1.01e-8   & 2.01 \\
$10^{-3}$ & $0.50$      & 3.70e-5 & 8.75e-6 & 2.13e-6 & 5.26e-7 & 1.31e-7 & 3.26e-8   & 2.00 \\
          & $0.75$      & 5.66e-6 & 1.38e-6 & 3.42e-7 & 8.52e-8 & 2.12e-8 & 5.30e-9   & 2.00  \\
    \hline
   \end{tabular}}
\end{table}

\begin{table}[hbt!]
\caption{Temporal errors $e$ for Example (c), corrected BDF2-CQ \eqref{eqn:fully-correct} with $\tau =1/N$.}\label{tab:c-correct}
\vspace{0cm}{\setlength{\tabcolsep}{6pt}
	\centering
	\begin{tabular}{ |c|c|cccccc|c|}
		\hline
	$t_N$  & $\alpha\backslash N$ &$50$ &$100$ & $200$ & $400$ & $800$ & $1600$  &rate \\
		\hline
	      & $0.25$      & 2.31e-8 & 8.65e-9 & 3.18e-9 & 1.15e-9 & 4.11e-10 & 1.44e-10   & 1.52  \\
     $ 1$ & $0.50$      & 2.77e-8 & 1.11e-8 & 4.24e-9 & 1.58e-9 & 5.73e-10 & 2.02e-10   & 1.50   \\
          & $0.75$      & 6.59e-9 & 4.94e-9 & 2.40e-9 & 1.01e-9 & 3.93e-10 & 1.45e-10   & 1.44    \\
    \hline
    	  & $0.25$      & 3.65e-9 & 1.27e-9 & 4.41e-10 & 1.54e-10 & 5.37e-11 & 1.84e-11   & 1.54  \\
$10^{-3}$ & $0.50$      & 1.72e-8 & 5.92e-9 & 2.05e-9 & 7.15e-10 & 2.48e-10 & 8.51e-11   & 1.55  \\
          & $0.75$      & 1.24e-8 & 4.37e-9 & 1.55e-9 & 5.45e-10 & 1.91e-10 & 6.59e-11   & 1.54   \\
    \hline
   \end{tabular}}
\end{table}

\section*{Acknowledgements}
The authors are grateful to two anonymous referees for their constructive comments which have led
to an improvement in the quality of the paper.
\appendix

\section{Gronwall's inequalities}\label{app:gronwall}
In this appendix, we collect several useful Gronwall's inequalities. The following generalized
Gronwall's inequality is useful %\cite[Lemma 6.3]{ElliottLarsson:1992}
\cite[Exercise 4, p. 190]{Henry:1981}.
\begin{lemma}\label{lem:gronwall}
Let the function $\varphi(t)\geq0$ be continuous for $0< t\leq T$. If
\begin{equation*}
  \varphi(t)\leq a{t^{-\mu}} + b\int_0^t (t-s)^{\beta-1}\varphi(s)\d s,\quad 0<t\leq T,
\end{equation*}
for some constants $a,b\geq0$, $0\leq \mu,\beta<1$, then there is a constant $c=c(b,T,\beta)$ such that
\begin{equation*}
  \varphi(t)\leq {\frac{ac}{1-\mu}t^{-\mu}},\quad 0<t\leq T.
\end{equation*}
\end{lemma}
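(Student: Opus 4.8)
The plan is to iterate the integral inequality and sum the resulting series. First I would substitute the inequality into itself once: replacing $\varphi(s)$ inside the integral by $as^{-\mu} + b\int_0^s (s-r)^{\beta-1}\varphi(r)\,\d r$ gives
\begin{equation*}
  \varphi(t) \le at^{-\mu} + ab\int_0^t (t-s)^{\beta-1}s^{-\mu}\,\d s + b^2\int_0^t\int_0^s (t-s)^{\beta-1}(s-r)^{\beta-1}\varphi(r)\,\d r\,\d s.
\end{equation*}
The first cross term is a Beta integral: $\int_0^t (t-s)^{\beta-1}s^{-\mu}\,\d s = B(\beta,1-\mu)\,t^{\beta-\mu}$, which is finite precisely because $\beta>0$ and $\mu<1$. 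For the double integral I would switch the order of integration (Fubini, all integrands nonnegative) to get $\int_0^t \varphi(r)\big(\int_r^t (t-s)^{\beta-1}(s-r)^{\beta-1}\,\d s\big)\d r = B(\beta,\beta)\int_0^t (t-r)^{2\beta-1}\varphi(r)\,\d r$.

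Iterating $n$ times and proving by induction the identity $\int_0^t (t-s)^{k\beta-1}s^{-\mu}\,\d s = \frac{\Gamma(\beta)^k}{\Gamma(k\beta)}B(k\beta, 1-\mu)\,t^{k\beta-\mu}$ for the kernels, one arrives at
\begin{equation*}
  \varphi(t) \le a t^{-\mu}\sum_{k=0}^{n-1} \frac{(b\,\Gamma(\beta))^k}{\Gamma(k\beta+1-\mu)}\,t^{k\beta} \cdot\Gamma(1-\mu)
  \;+\; \frac{(b\,\Gamma(\beta))^n}{\Gamma(n\beta)}\int_0^t (t-s)^{n\beta-1}\varphi(s)\,\d s,
\end{equation*}
up to keeping track of the Gamma-function bookkeeping. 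The remainder term tends to $0$ as $n\to\infty$ on $[0,T]$: since $\varphi$ is continuous on $(0,T]$ and integrable near $0$ (it is dominated by $at^{-\mu}$ plus a bounded term once we know it is bounded away from $0$; near $0$ the iterated bound already controls it), and $(b\Gamma(\beta))^n/\Gamma(n\beta)\to 0$ super-exponentially. Hence
\begin{equation*}
  \varphi(t) \le a\,\Gamma(1-\mu)\, t^{-\mu}\sum_{k=0}^{\infty} \frac{(b\,\Gamma(\beta)\,t^\beta)^k}{\Gamma(k\beta+1-\mu)}
  = a\,\Gamma(1-\mu)\,t^{-\mu}\,E_{\beta,1-\mu}\!\big(b\Gamma(\beta)t^\beta\big),
\end{equation*}
a Mittag--Leffler function, which is continuous and increasing in $t$, so it is bounded by its value at $T$. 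This yields $\varphi(t)\le c(b,T,\beta)\,\frac{a}{1-\mu}\,t^{-\mu}$, where the $1/(1-\mu)$ factor comes out by bounding $\Gamma(1-\mu)\le \Gamma(1)/(1-\mu)\cdot(\text{const})$ via $\Gamma(1-\mu)=\Gamma(2-\mu)/(1-\mu)$ and $\Gamma(2-\mu)\le 1$ for $\mu\in[0,1)$.

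The main obstacle is the justification that the remainder term vanishes, which requires knowing a priori that $\varphi$ is finite (indeed locally integrable) on all of $(0,T]$ before passing to the limit; one handles this by first working on $(0,T']$ with the a~priori continuity hypothesis to get a crude bound, then bootstrapping. A cleaner alternative — and the one I would actually write — is to cite the standard reference \cite[Exercise~4, p.~190]{Henry:1981} or Lemma~7.1.1 there directly, since the statement is classical; the computation above is exactly the proof given in Henry's book, and reproducing the Beta-integral induction in full detail is routine but tedious.
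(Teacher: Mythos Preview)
The paper does not actually prove this lemma: it simply states the result and cites \cite[Exercise~4, p.~190]{Henry:1981} as the source. Your iteration argument leading to the Mittag--Leffler bound is precisely the standard proof (and is in fact the argument behind the exercise in Henry that the paper cites), so you are supplying strictly more detail than the paper does; your closing remark that one could simply cite Henry is exactly what the authors chose to do.
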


The next result is a discrete analogue of Lemma \ref{lem:gronwall} \cite[Lemma 7.1]{ElliottLarsson:1992}.
\begin{lemma}\label{lem:gronwall-discrete}
Let $ \varphi^n\geq0$ for $0\leq t_n\leq T$. If
\begin{equation*}
  \varphi^n\leq a{t_n^{-\mu}} + b\tau\sum_{j=1}^{n}\varphi^j,\quad 0<t_n\leq T,
\end{equation*}
for some constants $a,b\geq 0$, and $b\tau<1/2$, $0\leq \mu<1$, then there is constant $c=c(b,T)$ such that
\begin{equation*}
  \varphi^n\leq {\frac{ac}{1-\mu}t_n^{-\mu},}\quad 0<t_n\leq T.
\end{equation*}
\end{lemma}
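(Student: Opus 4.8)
The plan is to follow the standard route for discrete singular Gronwall inequalities: reduce the estimate to an explicitly solvable linear recursion for the partial sums. First I would exploit the hypothesis $b\tau<1/2$ to absorb the diagonal ($j=n$) term. From $(1-b\tau)\varphi^n\le a\,t_n^{-\mu}+b\tau\sum_{j=1}^{n-1}\varphi^j$ and $1/(1-b\tau)\le 2$ one gets
\[
\varphi^n\le 2a\,t_n^{-\mu}+2b\tau\sum_{j=1}^{n-1}\varphi^j,\qquad 1\le n\le N.
\]
Writing $\tilde a=2a$, $\tilde b=2b$ and introducing the partial sums $\sigma^n:=\tau\sum_{j=1}^n\varphi^j$ (with $\sigma^0=0$), the inequality becomes $\varphi^n\le \tilde a\,t_n^{-\mu}+\tilde b\,\sigma^{n-1}$, whence
\[
\sigma^n=\sigma^{n-1}+\tau\varphi^n\le (1+\tilde b\tau)\,\sigma^{n-1}+\tilde a\tau\,t_n^{-\mu}.
\]

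Next I would unroll this scalar recursion, obtaining $\sigma^n\le \tilde a\sum_{k=1}^n(1+\tilde b\tau)^{n-k}\tau\,t_k^{-\mu}$, and bound the amplification factor by $(1+\tilde b\tau)^{n-k}\le e^{\tilde b\tau n}=e^{\tilde b t_n}\le e^{\tilde b T}$. The remaining sum is controlled by monotonicity of $s\mapsto s^{-\mu}$: since $\tau\,t_k^{-\mu}\le\int_{t_{k-1}}^{t_k}s^{-\mu}\,\d s$, we have $\tau\sum_{k=1}^n t_k^{-\mu}\le\int_0^{t_n}s^{-\mu}\,\d s=\tfrac{t_n^{1-\mu}}{1-\mu}$, where $\mu<1$ is used. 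Hence $\sigma^n\le\tfrac{\tilde a e^{\tilde b T}}{1-\mu}\,t_n^{1-\mu}$. Substituting this back into $\varphi^n\le\tilde a\,t_n^{-\mu}+\tilde b\,\sigma^{n-1}$ and using $t_{n-1}^{1-\mu}\le t_n^{1-\mu}=t_n\cdot t_n^{-\mu}\le T\,t_n^{-\mu}$, I obtain
\[
\varphi^n\le\tilde a\,t_n^{-\mu}+\frac{\tilde a\tilde b T e^{\tilde b T}}{1-\mu}\,t_n^{-\mu}\le\frac{\tilde a}{1-\mu}\bigl(1+\tilde b T e^{\tilde b T}\bigr)\,t_n^{-\mu},
\]
which is precisely the asserted bound with $c=2\bigl(1+2bT e^{2bT}\bigr)$, a constant depending only on $b$ and $T$ and, in particular, independent of $\mu$, $\tau$ and $a$.

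There is no genuine obstacle here; the only point that needs a little care---and the reason a naive one-step induction on the target bound $\varphi^n\le\tfrac{ca}{1-\mu}t_n^{-\mu}$ fails to close---is that $\tau\sum_{j<n}t_j^{-\mu}$ scales like $t_n^{1-\mu}$ rather than $t_n^{-\mu}$. One therefore either tracks the partial sums $\sigma^n$ as above, absorbing the geometric amplification into the harmless factor $e^{bT}$, or equivalently runs an iteration argument as for the continuous counterpart, Lemma~\ref{lem:gronwall}; the elementary bound $t_n^{1-\mu}\le T\,t_n^{-\mu}$ then restores the correct singular weight at the cost of a factor $T$. This argument is in essence that of \cite[Lemma 7.1]{ElliottLarsson:1992}, reproduced here for completeness and to make explicit the $\mu$-dependence of the constant.
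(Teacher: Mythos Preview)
Your proof is correct and follows essentially the same approach as the paper: introduce the partial sums $\sigma^n=\tau\sum_{j=1}^n\varphi^j$, derive a scalar recursion for $\sigma^n$, unroll it, bound the geometric factor by $e^{cbT}$, and estimate $\tau\sum_{k}t_k^{-\mu}$ by the integral $\int_0^{t_n}s^{-\mu}\,\d s$. The only cosmetic difference is that the paper keeps the diagonal term implicit, writing $\sigma^n\le(1-b\tau)^{-1}\sigma^{n-1}+(1-b\tau)^{-1}\tau\phi^n$ and bounding $(1-b\tau)^{-n}\le e^{2bT}$ via $(1-x)^{-1}\le e^{2x}$ on $[0,1/2]$, whereas you first absorb the diagonal to obtain the explicit recursion $\sigma^n\le(1+2b\tau)\sigma^{n-1}+2a\tau t_n^{-\mu}$; the resulting constants differ only by harmless numerical factors.
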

\begin{proof}
This lemma is a special case of \cite[Lemma 7.1]{ElliottLarsson:1992}, but without explicit
dependence on $\alpha$. We give a short proof for completeness following \cite[p. 258]{Thomee:2006}.
Let $\sigma^n=\tau\sum_{j=1}^n{\color{blue}\varphi^n}$, and $\phi^n=at_n^{-\mu}$. Then
\begin{equation*}
  \tau^{-1}(\sigma^n-\sigma^{n-1}) \leq \phi^n + b\sigma^{n},
\end{equation*}
i.e., $\sigma^n \leq (1-b\tau)^{-1}\sigma^{n-1} + (1-b\tau)^{-1}\tau \phi^n$. Consequently, since the time interval is finite,
\begin{equation*}
  \sigma^n \leq \tau\sum_{j=1}^n (1-b\tau)^{j-n-1}\phi^j \leq e^{2bT}\tau\sum_{j=1}^n\phi^j \leq\frac{ae^{2bT}}{1-\mu}t_n^{1-\mu},
\end{equation*}
since $(1-x)^{-1}\leq e^{2x}$ for $x\in[0,1/2]$.
This directly shows the desired assertion.\qed
\end{proof}

The next result is a variant of Lemma \ref{lem:gronwall-discrete} with log factors \cite[p. 258]{Thomee:2006}.
\begin{lemma}\label{lem:gronwall-discrete-log}
Let $\varphi^n\geq0$ for $0\leq t_n\leq T$. With $\ell_n= \log(1+t_n/\tau)$, if
\begin{equation*}
  \varphi^n\leq at_n^{-1} \ell_n^p + b\tau\sum_{j=1}^n \varphi^j,\quad 0<t_n\leq T,
\end{equation*}
for some constants $a,b\geq 0$ and $p>0$, then there is constant $c=c(b,T)$ such that
\begin{equation*}
  \varphi^n\leq cat_n^{-1}\ell_n^p,\quad 0<t_n\leq T.
\end{equation*}
\end{lemma}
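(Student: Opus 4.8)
The plan is to transcribe the proof of Lemma~\ref{lem:gronwall-discrete}, replacing the power weight $t_n^{-\mu}$ there by the logarithmic weight $\phi^n:=a\,t_n^{-1}\ell_n^p$; only the estimate of the accumulated weight is genuinely new. Assume $b\tau\le\tfrac12$, as in Lemma~\ref{lem:gronwall-discrete}, and set $\sigma^n=\tau\sum_{j=1}^n\varphi^j$ with $\sigma^0=0$, so that $\tau^{-1}(\sigma^n-\sigma^{n-1})=\varphi^n$ and the hypothesis becomes $(1-b\tau)\sigma^n\le\sigma^{n-1}+\tau\phi^n$. Iterating this one-step recursion and using $(1-b\tau)^{-(n-j+1)}\le(1-b\tau)^{-n}\le e^{2b\tau n}\le e^{2bT}$, exactly as in that proof, gives $\sigma^n\le e^{2bT}\tau\sum_{j=1}^n\phi^j$, whence $\varphi^n\le\phi^n+b\,\sigma^n\le a\,t_n^{-1}\ell_n^p+b\,e^{2bT}\,\tau\sum_{j=1}^n\phi^j$.

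The key step is to bound the accumulated weight $\tau\sum_{j=1}^n\phi^j$. Since $\tau/t_j=1/j$ and $\ell_j=\log(1+j)$, it equals $a\sum_{j=1}^n j^{-1}(\log(1+j))^p$. Comparing with $\int_1^{n}x^{-1}(\log(1+x))^p\,\d x$ and substituting $u=\log(1+x)$ turns the integral into $\int_{\log 2}^{\log(1+n)}u^p\,\tfrac{e^u}{e^u-1}\,\d u$; since $\tfrac{e^u}{e^u-1}$ is bounded on $[\log 2,\infty)$, this is at most $c\,\ell_n^{\,p+1}$, and hence $\tau\sum_{j=1}^n\phi^j\le c\,a\,\ell_n^{\,p+1}$. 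To recover the stated weight one must then absorb one logarithmic factor: writing $\ell_n^{\,p+1}=\ell_n\cdot\ell_n^{\,p}$, the extra $\ell_n$ has to be controlled against $t_n^{-1}$ using $t_n\le T$ and the slow variation of $\ell$ (for instance $\ell_j\le\ell_n$, and $j^{-1}\le 2/n$ on the high-index block $j>n/2$). Put differently, the crux is the counting inequality $\tau\sum_{j=1}^{n}t_j^{-1}\ell_j^p\le c\,t_n^{-1}\ell_n^p$ with $c=c(T)$; granted this, $\varphi^n\le c\,a\,t_n^{-1}\ell_n^p$ follows, and the same inequality also closes a direct strong induction on $n$ with the ansatz $\varphi^n\le C\,a\,t_n^{-1}\ell_n^p$ (isolating the $j=n$ term via $b\tau\le\tfrac12$).

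I expect precisely this reconciliation---extracting the sharp weight $t_n^{-1}\ell_n^p$ from the accumulated sum, which a crude integral comparison delivers only with an extra factor $\ell_n$---to be the main obstacle of the proof; everything else is a routine transcription of Lemma~\ref{lem:gronwall-discrete}, and it is this step that fixes the constant in the stated form $c=c(b,T)$.
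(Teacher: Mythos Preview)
The paper gives no proof of this lemma; it only cites \cite[p.~258]{Thomee:2006}. Your proposal transcribes the proof of Lemma~\ref{lem:gronwall-discrete} and correctly isolates the only nontrivial step as the ``counting inequality'' $\tau\sum_{j=1}^{n}t_j^{-1}\ell_j^{\,p}\le c\,t_n^{-1}\ell_n^{\,p}$. But this inequality is \emph{false}: since $\tau/t_j=1/j$, the left-hand side equals $\sum_{j=1}^{n}j^{-1}(\log(1+j))^p\sim\tfrac{1}{p+1}\ell_n^{\,p+1}$, while at $t_n=T$ the right-hand side is $cT^{-1}\ell_n^{\,p}$; their ratio is of order $T\ell_n$, which is unbounded as $\tau\to0$. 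Hence neither your $\sigma^n$-argument nor the strong-induction variant can close.

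In fact the same computation shows that the lemma as stated cannot hold: the extremal sequence $\psi^n$ defined by taking equality in the hypothesis satisfies $\psi^N\ge b\tau\sum_{j=1}^{N}\phi^j\ge c_{b,p}\,a\,\ell_N^{\,p+1}$ at $t_N=T$, which exceeds any fixed multiple of $aT^{-1}\ell_N^{\,p}$ once $\tau$ is small. What the argument of Lemma~\ref{lem:gronwall-discrete} \emph{does} deliver is $\varphi^n\le c\,a\,(t_n^{-1}\ell_n^{\,p}+\ell_n^{\,p+1})\le c'\,a\,t_n^{-1}\ell_n^{\,p+1}$ (using $t_n\le T$ and $\ell_n\ge\log2$), i.e.\ the conclusion with one additional logarithmic factor. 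This is presumably the intended statement, and it still suffices for second-order convergence in Theorem~\ref{thm:err-smooth}, at the price of an extra $\ell_n$ in the final bound. Your diagnosis of where the difficulty lies is accurate; the obstacle is intrinsic to the stated form of the lemma rather than a missing trick in your argument.
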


\section{Proof of Theorem \ref{thm:reg-inhomo}}\label{app:inhomo}
In this part, we prove Theorem \ref{thm:reg-inhomo}, by considering $(t^ku(t))^{(k)}$, instead of
$(t^{k+1}u(t))^{(k)}$ for the proof of Theorem \ref{thm:reg-homo}. We begin with a bound on
$\frac{\d ^k}{\d t^k}(t^k\int_0^t E(t-s;t_*) f(s)\d s)$,
which follows from straightforward but lengthy computation.
\begin{lemma}\label{lem:bdd-f} Let $k\geq 1$. Then for any $\beta\in [0,1)$, there holds
\begin{align*}
  & \Big\|A_*^\beta\frac{\d ^k}{\d t^k}\Big(t^k\int_0^t E_*(t-s) f(s)\d s\Big)|_{t=t_*}\Big\|_{L^2(\Omega)}\\
   \leq & c\sum_{m=0}^{k-1} t_*^{(1-\beta)\alpha+m}\|f^{(m)}(0)\|_{L^2(\Omega)} + ct_*^k\int_0^{t_*}(t_*-s)^{(1-\beta)\alpha-1}\|f^{(k)}( s)\|_{L^2(\Omega)}\d s,
\end{align*}
and further,
\begin{align*}
  & \Big\|A_*\frac{\d ^k}{\d t^k}\Big(t^k\int_0^t E_*(t-s) f(s)\d s\Big)|_{t=t_*}\Big\|_{L^2(\Omega)}\\
   \leq & c\sum_{m=0}^k t_*^{m}\|f^{(m)}(0)\|_{L^2(\Omega)} + ct_*^k\int_0^{t_*}\|f^{(k+1)}( s)\|_{L^2(\Omega)}\d s.
\end{align*}
\end{lemma}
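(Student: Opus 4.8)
The plan is to mirror the structure of the proof of Lemma~\ref{lem:homo}. Expanding $t^k=[(t-s)+s]^k$ inside the convolution gives
\[
t^k\int_0^t E_*(t-s)f(s)\,\d s=\sum_{m=0}^k\binom{k}{m}\int_0^t W_m(t-s)\,g_{k-m}(s)\,\d s,\qquad W_m(r):=r^mE_*(r),\quad g_p(s):=s^pf(s),
\]
so it suffices to bound $\frac{\d^k}{\d t^k}\int_0^t W_m(t-s)g_{k-m}(s)\,\d s$ at $t=t_*$, for each $m$. Writing this convolution as $\int_0^t W_m(r)g_{k-m}(t-r)\,\d r$ and differentiating $k$ times, pushing all derivatives onto $g_{k-m}(t-\cdot)$, yields
\[
\frac{\d^k}{\d t^k}\int_0^t W_m(t-s)g_{k-m}(s)\,\d s=\sum_{j=0}^{k-1}W_m^{(k-1-j)}(t)\,g_{k-m}^{(j)}(0)+\int_0^t W_m(t-s)\,g_{k-m}^{(k)}(s)\,\d s.
\]
Since $g_{k-m}(s)=s^{k-m}f(s)$ vanishes to order $k-m-1$ at $s=0$, only $j\ge k-m$ contribute to the boundary sum, so $W_m^{(i)}$ appears only for $i\le m-1$; moreover $g_{k-m}^{(j)}(0)$ is a multiple of $f^{(m-1-i)}(0)$, and by Leibniz $g_{k-m}^{(k)}(s)=\sum_{p=0}^{k-m}c_{m,p}\,s^pf^{(m+p)}(s)$ with $m+p\le k$.

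For $\beta\in[0,1)$ I would use the interpolated smoothing bound $\|A_*^\beta E_*^{(q)}(t)\|\le ct^{(1-\beta)\alpha-1-q}$ (from Lemma~\ref{lem:smoothing}(ii)), which gives $\|A_*^\beta W_m^{(i)}(t_*)\|\le ct_*^{(1-\beta)\alpha+(m-1-i)}$ and $\|A_*^\beta W_m(t_*-s)\|\le c(t_*-s)^{(1-\beta)\alpha+m-1}$. Hence each boundary term is bounded by $c\,t_*^{(1-\beta)\alpha+m'}\|f^{(m')}(0)\|$ with $m'\le k-1$. In the remaining integral, the summand with $m+p=k$ is handled directly using $(t_*-s)^ms^{k-m}\le t_*^k$, giving the term $ct_*^k\int_0^{t_*}(t_*-s)^{(1-\beta)\alpha-1}\|f^{(k)}(s)\|\,\d s$; for $m+p<k$ I would substitute the Taylor expansion of $f^{(m+p)}$ at $0$ with integral remainder involving $f^{(k)}$, so that the polynomial part yields (via a Beta integral) terms $c\,t_*^{(1-\beta)\alpha+m'}\|f^{(m')}(0)\|$, $m'\le k-1$, and the remainder, after Fubini and a crude bound on the resulting kernel, again yields $c\,t_*^k\int_0^{t_*}(t_*-s)^{(1-\beta)\alpha-1}\|f^{(k)}(s)\|\,\d s$. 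Summing over $m$ proves the first assertion; note that integrability of the kernel $(t_*-s)^{(1-\beta)\alpha-1}$ forces $\beta<1$.

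For $\beta=1$ the bound $\|A_*E_*(t)\|\le ct^{-1}$ is no longer integrable, so I would instead use the identity $A_*E_*(t)=(I-F_*(t))'$ from \eqref{eqn:AE=F'}. Writing $A_*W_m(r)=\frac{\d}{\d r}\!\big(r^m(I-F_*(r))\big)-mr^{m-1}(I-F_*(r))$ and integrating by parts in $s$ in $\int_0^{t_*}[A_*W_m(t_*-s)]g_{k-m}^{(k)}(s)\,\d s$, and using $\|I-F_*(r)\|\le c$ (Lemma~\ref{lem:smoothing}(iii)) together with $r^m(I-F_*(r))|_{r=0}=0$, moves one further derivative onto $g_{k-m}$ — producing $g_{k-m}^{(k+1)}$ and hence the $\|f^{(k+1)}\|$ term — plus a boundary contribution at $s=0$ equal to $t_*^m(I-F_*(t_*))g_{k-m}^{(k)}(0)$, i.e.\ $c\,t_*^m\|f^{(m)}(0)\|$ with $m\le k$. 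The lower-order terms are again disposed of by Taylor expansion of $f$ at $0$, while the boundary terms $A_*W_m^{(i)}(t_*)g_{k-m}^{(j)}(0)$ are controlled via $\|A_*W_m^{(i)}(t_*)\|\le ct_*^{m-1-i}$. Throughout, the constants stay bounded as $\alpha\to1^-$ because every estimate is drawn from Lemma~\ref{lem:smoothing}.

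I expect the main obstacle to be purely organizational: the three nested expansions (binomial in $t^k$, Leibniz in $g_{k-m}^{(k)}$, Taylor of $f^{(q)}$ at $0$) generate a large number of terms, and one must verify that each produced exponent is exactly one of $t_*^{(1-\beta)\alpha+m'}$, $t_*^k$ or $t_*^m$ as required, that every singular kernel is integrable, and — in the case $\beta=1$ — that the integration-by-parts boundary terms at $s=t_*$ vanish, which holds because $W_m$ and the relevant antiderivative carry the positive power $(t_*-s)^m$ when $m\ge1$, while $(I-F_*)(0)=0$ handles the case $m=0$.
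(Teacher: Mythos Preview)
Your proposal is correct, and it uses exactly the same ingredients as the paper (the smoothing bounds of Lemma~\ref{lem:smoothing}, the Taylor expansion \eqref{eqn:bdd-fs} of $f^{(m)}$ at $0$, and the identity \eqref{eqn:AE=F'} plus integration by parts for $\beta=1$), but the decomposition is different. The paper does \emph{not} expand $t^k=[(t-s)+s]^k$ inside the integral. Instead it applies the Leibniz rule directly to the product $t^k\cdot J(t)$ with $J(t)=\int_0^tE_*(s)f(t-s)\,\d s$, obtaining
\[
{\rm I}(t)=\sum_{m=0}^{k}\binom{k}{m}^2 m!\,t^m\Big(\sum_{\ell=0}^{m-1}E_*^{(\ell)}(t)f^{(m-1-\ell)}(0)+\int_0^tE_*(s)f^{(m)}(t-s)\,\d s\Big),
\]
and then bounds each term with $\|A_*^\beta E_*^{(\ell)}(t_*)\|\le ct_*^{(1-\beta)\alpha-1-\ell}$ and reduces the $f^{(m)}$ integrals, $m<k$, to $f^{(k)}$ via Taylor expansion. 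For $\beta=1$ it integrates by parts only once, in the single term $\int_0^{t_*}A_*E_*(s)f^{(m)}(t_*-s)\,\d s$, using $A_*E_*=(I-F_*)'$ and $I-F_*(0)=0$.

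Your route, mirroring Lemma~\ref{lem:homo}, introduces the auxiliary functions $W_m=r^mE_*$ and $g_{k-m}=s^{k-m}f$ and produces a larger combinatorial tree (binomial, then Leibniz in $g_{k-m}^{(k)}$, then Taylor), together with a slightly more elaborate integration by parts when $\beta=1$. This is a perfectly valid alternative and has the virtue of being structurally parallel to the perturbation estimate; the paper's choice is shorter here because, unlike in Lemma~\ref{lem:homo}, there is no factor $(A_*-A(s))$ forcing the $(t-s)/s$ splitting, so keeping $t^m$ outside the integral avoids tracking $W_m^{(i)}$ altogether.
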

\begin{proof}
Let ${\rm I}(t) = \frac{\d ^k}{\d t^k}(t^k\int_0^t E_*(t-s) f(s)\d s)$. It follows from the
elementary identity
\begin{align*}
  \frac{\d^m}{\d t^m} \int_0^t E_*(s)f(t-s)\d s & = \sum_{\ell=0}^{m-1}E_*^{(\ell)}(t)f^{(m-1-\ell)}(0) + \int_0^t E_*(s)f^{(m)}(t-s)\d s
\end{align*}
and direct computation that
\begin{align*}
 {\rm I}(t)  = \sum_{m=0}^k \left(\begin{array}{c}m\\ k\end{array}\right)^2t^m\Big(\sum_{\ell=0}^{m-1}E_*^{(\ell)}(t)f^{(m-1-\ell)}(0) + \int_0^t E_*(s)f^{(m)}(t-s)\d s\Big).
\end{align*}
Consequently, by Lemma \ref{lem:smoothing}, for $\beta\in[0,1)$,
\begin{align*}
   \|A_*^\beta{\rm I}&(t^*)\|_{L^2(\Omega)}
   \leq  c\sum_{m=0}^k t_*^m\sum_{\ell=0}^{m-1} \Big\|A_*^\beta E_*^{(\ell)}(t_*)\Big\|\|f^{(m-1-\ell)}(0)\|_{L^2(\Omega)} \\
    & + c\sum_{m=0}^k t_*^m\int_0^{t_*}\|A_*^\beta E_*(s)\|\|f^{(m)}(t_*-s)\|_{L^2(\Omega)}\d s\\
   \leq& c\sum_{m=0}^k t_*^m\sum_{\ell=0}^{m-1} t_*^{(1-\beta)\alpha-1-\ell}\|f^{(m-1-\ell)}(0)\|_{L^2(\Omega)}
    \\ &+ c\sum_{m=0}^k t_*^m\int_0^{t_*}s^{(1-\beta)\alpha-1}\|f^{(m)}(t_*-s)\|_{L^2(\Omega)}\d s\\
   \leq & c\sum_{m=0}^{k-1}t_*^{(1-\beta)\alpha+m}\|f^{(m)}(0)\|_{L^2(\Omega)}+ c\sum_{m=0}^k t_*^m\int_0^{t_*}s^{(1-\beta)\alpha-1}\|f^{(m)}(t_*-s)\|_{L^2(\Omega)}\d s.
\end{align*}
Next we simplify the second summation. The following elementary identity: for $m<k$,
\begin{equation}\label{eqn:bdd-fs}
  f^{(m)}(s) = \sum_{j=0}^{k-m-1} f^{(m+j)}(0)\frac{s^j}{j!} + \frac{1}{(k-m)!} \int_0^s (s-\xi)^{k-m-1}f^{(k)}(\xi)\d \xi,
\end{equation}
and the semigroup property of the Riemann-Liouville fractional integral operator imply
\begin{align*}
    & \int_0^{t_*}(t_*-s)^{(1-\beta)\alpha-1}\|f^{(m)}(s)\|_{L^2(\Omega)}\d s
   \leq \int_0^{t_*}(t_*-s)^{(1-\beta)\alpha-1}\\
   &\times\Big(\sum_{j=0}^{k-m-1} \|f^{(m+j)}(0)\|_{L^2(\Omega)}\frac{s^j}{j!} + \frac{1}{(k-m)!}\int_0^s (s-\xi)^{k-m-1}\|f^{(k)}(\xi)\|_{L^2(\Omega)}\d \xi\Big)\d s\\
    \leq & c\sum_{j=0}^{k-m-1} t_*^{(1-\beta)\alpha + j}\|f^{(m+j)}(0)\|_{L^2(\Omega)}+ ct_*^{k-m}\int_0^{t_*} (t_*-s)^{(1-\beta)\alpha-1}\|f^{(k)}(s)\|_{L^2(\Omega)}\d s.
\end{align*}
Combining these estimates gives the desired assertion for $\beta\in[0,1)$. For $\beta=1$,
by the identity \eqref{eqn:AE=F'} and integration by parts (and the identity $I-F_*(0)=0$),
\begin{align*}
  A_*\int_0^{t_*} E_*(s)f^{(m)}(t_*-s)\d s
     &= \int_0^{t_*}(I-F_*(s))' f^{(m)}(t_*-s)\d s\\
    & = (I-F_*(t_*))f^{(m)}(0) + \int_0^{t_*} (I-F_*(s))f^{(m+1)}(t_*-s)\d s,
\end{align*}
and thus
\begin{align*}
   \|A_*{\rm I}(t_*)|_{t=t_*}\|_{L^2(\Omega)}
   \leq & c\sum_{m=0}^k t_*^m\Big(\sum_{\ell=0}^{m-1} \Big\|A_* E_*^{(\ell)}(t_*)\Big\|\|f^{(m-1-\ell)}(0)\|_{L^2(\Omega)} \\
     &+ \|f^{(m)}(0)\| + \int_0^{t_*}\|f^{(m+1)}( s)\|_{L^2(\Omega)}\d s\Big).
\end{align*}
Then repeating the preceding argument completes the proof.\qed
\end{proof}

Now we can present the proof of Theorem \ref{thm:reg-inhomo}.
\begin{proof}\,
Similar to Theorem \ref{thm:reg-homo}, the proof is based on mathematical induction.
Let $v_k(t)=t^ku(t)$ and $W_k(t)=t^kE_*(t)$. For $k=0$, by the representation
\eqref{eqn:Sol-expr-u}, we have
\begin{align*}
  A_*^\beta u(t_*) & = \int_0^{t_*}A_*^\beta E_*(t_*-s) f(s)\d s+\int_0^{t_*} A_*^\beta E_*(t_*-s)(A_*-A(s))u(s)\d s .
\end{align*}
Then for $\beta\in[0,1)$, by Lemma \ref{lem:smoothing}(ii) and \ref{lem:conti-A} there holds
\begin{align*}
  \|A_*^\beta u(t_*)\|_{L^2(\Omega)} & \leq  \int_0^{t_*}\|A_*^\beta E_*(t_*-s)\|\|f(s)\|_{L^2(\Omega)}\d s \\
   &\quad +\int_0^{t_*} \|A_* E_*(t_*-s)\|\|A_*^{\beta}(I-A_*^{-1}A(s))u(s)\|_{L^2(\Omega)}\d s\\
    & \leq c\int_0^{t_*}(t_*-s)^{(1-\beta)\alpha-1}\|f(s)\|_{L^2(\Omega)}\d s+c\int_0^{t_*} \|A_*^\beta u(s)\|_{L^2(\Omega)}\d s .
\end{align*}
The case $\beta=1$ follows similarly from the identity \eqref{eqn:AE=F'} and integration by parts:
\begin{align*}
  \|A_*u(t_*)\|_{L^2(\Omega)}  \leq c\|f(0)\|_{L^2(\Omega)} + c\int_0^{t_*}\|f'(s)\|_{L^2(\Omega)}\d s + c\int_0^{t_*} \|A_*u(s)\|_{L^2(\Omega)}\d s.
\end{align*}
Then standard Gronwall's inequality gives the assertion for the case $k=0$.
Now suppose the assertion holds up to $k-1<K$, and we prove it for $k\leq K$. Now note that
\begin{align*}
  v_k^{(k)}(t) &=  \frac{\d ^k}{\d t^k}\Big(t^k\int_0^t E_*(t-s) f(s)\d s\Big) + \sum_{m=0}^k\left(\begin{array}{c} k\\ m\end{array}\right)\frac{\d^k}{\d t^k}\int_0^tW_m(t-s)(A_*-A(s))v_{k-m}(s)\d s.
\end{align*}
This, Lemmas \ref{lem:bdd-f} and \ref{lem:inhomo} and triangle inequality give
\begin{align*}
    &\|A_*^\beta v_k^{(k)}(t)|_{t=t_*}\|_{L^2(\Omega)} \leq  c\int_0^{t_*}\|A_*^\beta v_k^{(k)}(s)\|_{L^2(\Omega)}\d s\nonumber\\
     &\quad + \left\{\begin{aligned}
      c\sum_{m=0}^{k-1} t_*^{(1-\beta)\alpha+m}\|f^{(m)}(0)\|_{L^2(\Omega)} + ct_*^k\int_0^{t_*}(t_*-s)^{(1-\beta)\alpha-1}\|f^{(k)}( s)\|_{L^2(\Omega)}\d s, & \quad \beta\in[0,1),\\
      c\sum_{m=0}^k t_*^{m}\|f^{(m)}(0)\|_{L^2(\Omega)} + ct_*^k\int_0^{t_*}\|f^{(k+1)}( s)\|_{L^2(\Omega)}\d s, &\quad \beta=1.
   \end{aligned}\right.
\end{align*}
This and the standard Gronwall's inequality complete the induction step.\qed
\end{proof}

The following result is needed in the proof of Theorem \ref{thm:reg-inhomo}.
\begin{lemma}\label{lem:inhomo}
Under the conditions in Theorem \ref{thm:reg-inhomo}, for any $\beta\in[0,1]$
and $m=0,\ldots,k$, there holds
\begin{align*}
&\Big\|A_*^\beta \frac{\d^k}{\d t^k}\int_0^t(t-s)^{k-m} E_*(t-s) (A_*-A(s))s^{m}u(s) \d s|_{t=t_*}\Big\|_{L^2(\Omega)} \\
& \le  c_0\int_0^{t_*}\|A_*^\beta(s^ku(s))^{(k)}\|_{L^2(\Omega)}\d s\nonumber\\
     &\quad + \left\{\begin{aligned}
      c\sum_{m=0}^{k-1} t_*^{(1-\beta)\alpha+m}\|f^{(m)}(0)\|_{L^2(\Omega)} + ct_*^k\int_0^{t_*}(t_*-s)^{(1-\beta)\alpha-1}\|f^{(k)}( s)\|_{L^2(\Omega)}\d s, & \quad \beta\in[0,1),\\
      c\sum_{m=0}^k t_*^{m}\|f^{(m)}(0)\|_{L^2(\Omega)} + ct_*^k\int_0^{t_*}\|f^{(k+1)}( s)\|_{L^2(\Omega)}\d s, &\quad \beta=1.
   \end{aligned}\right.
\end{align*}
\end{lemma}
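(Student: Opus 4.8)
The plan is to mirror the proof of Lemma \ref{lem:homo}, but now using the inhomogeneous induction hypothesis from Theorem \ref{thm:reg-inhomo}, which carries the $f$-dependent source terms instead of the $\|u_0\|_{\dot H^{2\gamma}}$ term. Write $\mathrm{I}_m(t) = \int_0^t(t-s)^{k-m}E_*(t-s)(A_*-A(s))v_m(s)\,\d s$ with $v_m(s)=s^m u(s)$ and $W_{k-m}(s)=s^{k-m}E_*(s)$. As in Lemma \ref{lem:homo}, apply the product rule and change variables to get
\begin{equation*}
  \mathrm{I}_m^{(k)}(t) = \sum_{\ell=0}^{k-m}\binom{k-m}{\ell}\int_0^t W_{k-m}^{(k-m)}(s)\,(A_*-A(t-s))^{(k-m-\ell)}\,v_m^{(\ell)}(t-s)\,\d s,
\end{equation*}
for $0\le m\le k$ (the terms that would require differentiating $W_{k-m}$ more than $k-m$ times do not arise because the binomial expansion distributes the $k$ derivatives between the two factors). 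Here I have used that $v_m$ and all its derivatives vanish at $t=0$ for $m\ge 1$, so no boundary terms appear; for $m=0$ one has $v_0=u$, $u(0)=0$, and again no boundary term.

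The core of the argument is bounding the integrand $\widetilde{\mathrm{I}}_{m,\ell}(s)=W_{k-m}^{(k-m)}(s)(A_*-A(t_*-s))^{(k-m-\ell)}v_m^{(\ell)}(t_*-s)$. I split into the two cases $\ell=k$ and $\ell<k$, exactly as in Lemma \ref{lem:homo}. When $\ell=k$ (forcing $m=0$ and $k-m-\ell=0$), Lemma \ref{lem:conti-A} together with Lemma \ref{lem:smoothing}(ii) bounds $\|A_*^\beta\widetilde{\mathrm{I}}_{0,k}(s)\|_{L^2(\Omega)}$ by $c\|A_*^\beta v_k^{(k)}(t_*-s)\|_{L^2(\Omega)}$, which upon integration produces the $c_0\int_0^{t_*}\|A_*^\beta(s^ku(s))^{(k)}\|_{L^2(\Omega)}\d s$ term. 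When $\ell<k$, Lemma \ref{lem:smoothing}(ii) gives $\|A_*^\beta W_{k-m}^{(k-m)}(s)\|\le c\,s^{(1-\beta)\alpha-1}$ (with an extra factor $s$ reabsorbed in the $\ell=k-m$ subcase, to compensate the extra $A_*$ hitting $v_m^{(\ell)}$), and Lemma \ref{lem:conti-A} handles the difference $(A_*-A(t_*-s))^{(k-m-\ell)}$ at the cost of a factor $(t_*-s)^{k-m-\ell+\mathbf 1_{\{k-m-\ell=0\}}}$ or similar; then the induction hypothesis of Theorem \ref{thm:reg-inhomo} applied to $\|A_*v_m^{(\ell)}(t_*-s)\|_{L^2(\Omega)}$ (or $\|A_*^\beta v_m^{(\ell)}\|$ via interpolation) contributes precisely the weighted sums of $\|f^{(j)}(0)\|_{L^2(\Omega)}$ and the convolution integral $\int_0^{\cdot}(\cdot-\xi)^{(1-\beta)\alpha-1}\|f^{(\ell)}(\xi)\|_{L^2(\Omega)}\d\xi$. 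Integrating the resulting kernels over $[0,t_*]$ — using the Beta-function identity $\int_0^{t_*}(t_*-s)^{a-1}s^{b-1}\d s = c\,t_*^{a+b-1}$ for $a,b>0$ and, for the convolution pieces, the semigroup property of the Riemann–Liouville integral as in the proof of Lemma \ref{lem:bdd-f} (via \eqref{eqn:bdd-fs}) — collapses everything into the stated right-hand side, with the powers of $t_*$ matching because each $v_m^{(\ell)}$ with $\ell<k$ carries at least one ``extra'' power of $t$ relative to $v_k^{(k)}$.

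For $\beta=1$ the bound $\|A_*^\beta W_{k-m}^{(k-m)}(s)\|\le c\,s^{(1-\beta)\alpha-1}$ degenerates (the exponent becomes $-1$ and is not integrable), so in the subcase $0\le m\le k$, $\ell<k-m$ I instead use the identity \eqref{eqn:AE=F'} to write $A_*W_{k-m}$ in terms of $(I-F_*)'$, integrate by parts in $s$, and invoke Lemma \ref{lem:smoothing}(iii) to get $\|D(s)\|\le c$ for the resulting operator $D(s)$; the boundary term at $s=0$ vanishes because $v_m^{(\ell)}(t_*)$ is multiplied by $(A_*-A(t_*-s))^{(k-m-\ell)}|_{s=0}$ times $D(0)$, and either $D(0)=I-F_*(0)=0$ (when $m=0$) or the extra derivative on $f$ is absorbed, exactly as in the $\beta=1$ branch of Lemma \ref{lem:homo}. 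This produces the $\beta=1$ alternative with $\|f^{(k+1)}\|_{L^2(\Omega)}$ in place of the convolution integral. The main obstacle is purely bookkeeping: tracking, across all pairs $(m,\ell)$, that the powers of $t_*$ and $t_*-s$ after the Beta-integration genuinely reassemble into $\sum_{m=0}^{k-1}t_*^{(1-\beta)\alpha+m}\|f^{(m)}(0)\|+t_*^k\int_0^{t_*}(t_*-s)^{(1-\beta)\alpha-1}\|f^{(k)}(s)\|\d s$ and that no term with a non-integrable singularity survives; this is routine but tedious, and is the reason the full proof is deferred to the appendix. All constants depend on $k$ (hence $K$) but remain bounded as $\alpha\to 1^-$, since every estimate feeds through Lemmas \ref{lem:smoothing}, \ref{lem:conti-A} and \ref{lem:bdd-f}, whose constants have this property.
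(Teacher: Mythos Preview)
Your proposal follows the paper's proof essentially line by line: the same Leibniz decomposition of $\mathrm{I}_m^{(k)}$, the same split into the top case producing the Gronwall term $\int_0^{t_*}\|A_*^\beta v_k^{(k)}\|\,\d s$ and the lower-order cases handled by the induction hypothesis (which the paper packages as a single quantity $T(s;f,k)$ via the Taylor identity \eqref{eqn:bdd-fs}), and the same integration-by-parts fix through \eqref{eqn:AE=F'} for $\beta=1$. One bookkeeping slip to correct: with your conventions $v_m(s)=s^m u(s)$ and $W_{k-m}(s)=s^{k-m}E_*(s)$, after placing $k-m$ derivatives on $W_{k-m}$ only $m$ remain for the Leibniz expansion, so the sum should run over $\ell=0,\dots,m$ with exponent $(m-\ell)$ on the $A$-difference, and the top case $\ell=k$ then forces $m=k$ (not $m=0$), which is exactly what delivers $v_k^{(k)}$ as you correctly conclude.
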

\begin{proof}
Let $v_k=t^ku(t)$ and $W_k(t)=t^kE_*(t)$. By the induction hypothesis and \eqref{eqn:bdd-fs}, for $\ell<m$, we have
\begin{align}\label{eqn:dudt}
  \|A_*v_m^{(\ell)}(s)\|_{L^2(\Omega)}
  &\leq  cs^{m-\ell}\Big(\sum_{j=0}^{\ell}s^{j}\|f^{(j)}(0)\|_{L^2(\Omega)}+s^\ell\int_0^s\|f^{(\ell+1)}(\xi)\|_{L^2(\Omega)}\d \xi\Big)\nonumber\\
  &\leq cs^{m-\ell}\Big(\sum_{j=0}^{k-1}s^{j}\|f^{(j)}(0)\|_{L^2(\Omega)}+s^{k-1}\int_0^s\|f^{(k)}(\xi)\|_{L^2(\Omega)}\d \xi\Big).
\end{align}
We denote the term in the bracket by ${\rm T}(s;f,k)$. Now similar to the proof of Lemma \ref{lem:homo}, let ${\rm I}_m(t)$
be the integral on the left hand side. Then in view of the identity
\begin{equation*}
  {\rm I}_m^{(k)}(t) = \sum_{\ell=0}^{k-m}\Big(\begin{array}{c}\ell\\ k-m\end{array}\Big)\underbrace{\int_0^tW_m^{(m)}(A_*-A(t_*-s))^{(k-m-\ell)}v_{k-m}^{(\ell)}(s)\d s}_{{\rm I}_{m,\ell}(t)},
\end{equation*}
it suffices to bound the integrand $\widetilde{\rm I}_{m,\ell}(s)$
of the integral ${\rm I}_{m,\ell}(t_*)$, $\ell=0,1,\ldots,k-m$. Below we discuss the cases $\beta\in [0,1)$ and
$\beta=1$ separately, due to the difference in singularity, as in the proof of Lemma \ref{lem:homo}.

\noindent{\it Case (i): $\beta\in[0,1)$.}  For the case $\ell<k$, Lemmas \ref{lem:smoothing}(ii) and \ref{lem:conti-A} lead to
\begin{align*}
   \|A_*^\beta \widetilde {\rm I}_{m,\ell}(s)\|_{L^2(\Omega)}
  &\leq\|A_*^\beta W_{m}^{(m)}\|
  \|(A_*-A(t_*-s))^{(k-m-\ell)}v_{k-m}^{(\ell)}(t_*-s)\|_{L^2(\Omega)}\\
  & \leq \left\{\begin{aligned}
     cs^{(1-\beta)\alpha-1}s\|A_*v_{k-m}^{(k-m)}(t_*-s)\|_{L^2(\Omega)}, & \quad\ell=k-m,\\
     cs^{(1-\beta)\alpha-1}\|A_*v_{k-m}^{(\ell)}(t_*-s)\|_{L^2(\Omega)}, &\quad \ell <k-m,
     \end{aligned}\right.\\
     & \leq \left\{\begin{aligned} cs^{(1-\beta)\alpha} {\rm T}(t_*-s;f,k), & \quad \ell = k-m, \\
      cs^{(1-\beta)\alpha-1} (t_*-s)^{k-m-\ell}{\rm T}(t_*-s;f,k), &\quad \ell <k-m,
      \end{aligned}\right.
\end{align*}
where the last step is due to \eqref{eqn:dudt}. Note that for $\ell<k$,
the derivation requires $\beta\in[0,1)$. Similarly, for the case $\ell=k$ (and thus $m=0$),
\begin{align}\label{eqn:bdd-ikk}
 \|A_*^\beta \widetilde {\rm I}_{0,k}(s)\|_{L^2(\Omega)}\leq
    c\|A_*^\beta v_k^{(k)}(t_*-s)\|_{L^2(\Omega)}.
\end{align}

\noindent{\it Case (ii): $\beta=1$.} %In the case $\ell=k-m\leq k$, then direct computation gives
%\begin{equation*}
%   \|A_*\widetilde {\rm I}_{m,\ell}(s)\|_{L^2(\Omega)}\leq \left\{\begin{aligned}
%    c\|A_*v_k^{(k)}(t_*-s)\|_{L^2(\Omega)}, & \quad \ell = k,\\
%    c{\rm T}(t_*-s;f,k), & \quad \ell < k.
%    \end{aligned}\right.
%\end{equation*}
Note that for $\ell<k$, the derivation in case (i) requires $\beta\in[0,1)$. When $\ell<k$
and $\beta=1$, using the identity \eqref{eqn:int-part} and Lemma \ref{lem:smoothing} and repeating
the argument of Lemma \ref{lem:homo}, we obtain
\begin{align*}
   \|A_*\widetilde{\rm I}_{m,\ell}(s)\|_{L^2(\Omega)} & \leq \left\{\begin{aligned} c(t_*-s)^{k-m-\ell-1}{\rm T}(t_*-s;f,k), & \quad \ell < k-1,\\
    c{\rm T}(t_*-s;f,k) + c\|A_*v_k^{(k)}(t_*-s)\|_{L^2(\Omega)}, &\quad \ell = k-1,
    \end{aligned}\right.
\end{align*}
Combining the preceding estimates, integrating from $0$ to $t_*$ in $s$ and then applying
standard Gronwall's inequality complete the proof.\qed
\end{proof}

\bibliographystyle{abbrv}

\end{document}